\documentclass{amsart}
\usepackage{xcolor}
\usepackage{tikz-cd}
\usetikzlibrary{decorations.markings}
\usepackage[english]{babel}
\usepackage{alphabeta}  
\usepackage{amssymb}
\usepackage{amsmath}
\usepackage{bm} 
\usepackage[all]{xy} 
\usepackage{mathrsfs} 

\usepackage{a4wide}

\usepackage{float}	 
\usepackage{graphicx}
\usepackage{chngcntr} 
\usepackage{caption} 

\usepackage{enumerate}

\usepackage[colorlinks,linkcolor={blue},citecolor={blue},urlcolor={purple},]{hyperref}

\usepackage[colorinlistoftodos,prependcaption,textsize=tiny]{todonotes}

\usepackage{easyReview}

\usepackage{mathtools}
\usepackage{physics} 

\usepackage{amsthm}
\usepackage{cleveref}   

\makeatletter
\def\@tvsp{\mathchoice{{}\mkern-4.5mu}{{}\mkern-4.5mu}{{}\mkern-2.5mu}{}}
\def\ltrivert{\left|\@tvsp\left|\@tvsp\left|}
\def\rtrivert{\right|\@tvsp\right|\@tvsp\right|}
\makeatother

\newcommand{\cl}[1]{\overline{#1}}

\renewcommand{\epsilon}{\varepsilon}
\renewcommand{\exp}[1]{e^{#1}}

\renewcommand{\subset}{\subseteq}
\renewcommand{\supset}{\supseteq}
\renewcommand{\1}{{\bf 1}}

\newcommand{\C}{\ensuremath{\mathbb C}}

\newcommand{\R}{\mathbb{R}}

\newcommand{\N}{\mathbb{N}}

\newcommand{\F}{\mathscr F}
\newcommand{\E}{\mathbb E}

\renewcommand{\P}{\mathbb P}
\renewcommand{\O}{\mathcal O}
\renewcommand{\L}{\mathcal L}

\newcommand{\eps}{\varepsilon}
\renewcommand{\tilde}{\widetilde}

\DeclareSymbolFont{matha}{OML}{txmi}{m}{it}
\DeclareMathSymbol{\varv}{\mathord}{matha}{118}

\theoremstyle{plain}

\newtheorem{theorem}{Theorem}[section]
\newtheorem{lemma}[theorem]{Lemma}
\newtheorem{proposition}[theorem]{Proposition}
\newtheorem{corollary}[theorem]{Corollary}

\theoremstyle{definition}
\newtheorem{definition}[theorem]{Definition} 

\newtheorem{assumption}[theorem]{Assumption}

\theoremstyle{remark}

\theoremstyle{remark}
\newtheorem{remark}[theorem]{Remark}

\numberwithin{equation}{section}

\newcommand{\coloneq}{\coloneqq}

\newcommand{\wt}{\widetilde}
\newcommand{\SMR}{{\rm SMR}}
\newcommand{\DSMR}{{\rm DSMR}}

	\allowdisplaybreaks

\title{Fully discrete stochastic maximal regularity and $H^\infty$-calculus for second-order elliptic operators}

\begin{document}

\pagestyle{plain}
 \author{Foivos Evangelopoulos-Ntemiris}
 \address{Delft Institute of Applied Mathematics\\
 Delft University of Technology \\ P.O. Box 5031\\ 2600 GA Delft\\The
 Netherlands} \email{
 F.A.Evangelopoulos-Ntemiris@tudelft.nl and 
 foivosevangelopoulos@gmail.com}

\thanks{The author has received funding from the VICI subsidy VI.C.212.027 of the Dutch Research Council (NWO)}

\begin{abstract} This paper establishes the fully discrete stochastic maximal $L^p$-regularity and the accompanying sharp maximal estimate for numerical approximations of parabolic stochastic partial differential equations.  We consider the spatial finite element discretization $A_h$ of a general second-order elliptic operator $A=-\nabla \cdot a\nabla +b\cdot \nabla +c$  with Dirichlet boundary conditions on a smooth, bounded, convex domain in $\mathbb{R}^3$, coupled with a broad class of temporal schemes, including rational approximations and the exponential Euler method. To obtain these optimal discrete regularity results, we establish a bounded $H^\infty$-calculus for the discrete spatial operator $A_h$, uniformly in the mesh size $h$. As a direct byproduct, we also establish the discrete-in-space stochastic maximal regularity for the corresponding spatial semi-discretizations.
\end{abstract}

\keywords{Fully discrete stochastic maximal regularity, $H^\infty$-calculus, space discretization, finite elements}

\subjclass[2020]{Primary: 65M12, 60H35; Secondary: 65J10, 47D06, 47A60, 46N40}

\maketitle
\setcounter{tocdepth}{1}
	\tableofcontents

\section{Introduction}
Stochastic maximal $L^p$-regularity (SMR) techniques play a central role in the theory of stochastic evolution equations of parabolic type. At the core of this theory is the analysis of linear stochastic partial differential equations (SPDEs) of the form 
\begin{equation}\label{eq:SPDE intro}
 \begin{cases}
 du (t) + Au(t) \, dt =  g(t) \,dW(t), \quad t \in (0,T),
 \\
 u(0) =0.
 \end{cases}
\end{equation}
Here, $A$ represents a differential operator subject to appropriate boundary conditions, $W$ denotes a cylindrical Brownian motion on a probability space $\Omega$, $g$ is a given source term and $T\in(0,\infty]$. Broadly speaking, SMR refers to the property that the solution $u$ gains exactly as much spatial and temporal regularity as the governing operator $A$ and the driving noise $g$ allow. In the continuous-in-time setting,  this typically yields an optimal \textit{a priori} estimate that bounds the solution $u$ in $L^p(\Omega \times (0,T); D(A^{1/2}))$ in terms of the norm of the source term $g$. In applications, SMR is usually accompanied by a sharp \textit{maximal estimate}, which bounds the $L^p(\Omega)$-norm of the pathwise supremum of the solution in the real interpolation space $(X_0,X_1)_{\frac 12 -\frac 1p,p}$.

Although maximal regularity is inherently a linear concept, it becomes a powerful tool for analyzing \textit{nonlinear} problems through linearization techniques. In particular, it allows us to establish local well-posedness and regularity results, and formulate sharp blow-up criteria for global existence for a wide class of nonlinear equations (see the monographs \cite{analysis_volume_3, PrussSim} and the surveys \cite{agresti2025nonlinear, wilke2023linear}, as well as the references therein). A standard sufficient condition for establishing maximal regularity for \eqref{eq:SPDE intro} is that the underlying linear operator $A$ admits a bounded $H^\infty$-calculus of angle less than $\pi/2$; see  
\cite{Agresti_Veraar_critical_1, agresti2025nonlinear, NVWSMR, NVW_2012_survey}.

In recent years, applications of SMR to numerical analysis, particularly concerning stability estimates and convergence rates for (nonlinear) SPDEs, have begun to emerge. To approximate the solution of \eqref{eq:SPDE intro} numerically via a spatial semi-discretization, one introduces a finite element space $S_h$ of piecewise linear elements. This yields a continuous-in-time and discrete-in-space equation of the form
\begin{equation}\label{eq:spatial_scheme_intro}
\begin{cases}
    du_h(t) + A_h u_h(t) \, dt =  P_h g(t) \, dW(t), \quad t \in (0,T),
    \\
    u_h(0) = 0,
\end{cases}
\end{equation}
where $A_h$ is the discretization of $A$ and $P_h$ is the $L^2$ projection onto $S_h$. Li and Zhou~\cite{LiZhou2026} showed that the discrete Dirichlet Laplacian $A_h=-\Delta_h$ on a smooth, bounded, convex domain $\O \subset \R^3$ admits a bounded $H^\infty$-calculus of angle zero, with a constant that is independent of the spatial mesh size $h$. As a consequence, the discrete Dirichlet Laplacian $-\Delta_h$ possesses \textit{discrete-in-space} SMR, which is accompanied by a sharp maximal estimate. Using this property, the authors proved the convergence of a spatial semi-discretization for a linear stochastic heat equation. Zhou and Li~\cite{Zhou_Li_2025_allen_cahn} subsequently applied these tools to establish the convergence of a spatial semi-discretization for the three-dimensional stochastic Allen--Cahn equation.
Parallel advances have been made regarding temporal discretizations of \eqref{eq:SPDE intro} of the form 
\begin{equation}\label{eq:temporal scheme intro}
\begin{cases}
U_{n+1} \coloneq r(\tau A) U_{n} + r(\tau A) \int_{t_n}^{t_{n+1}} g(s) \, d W(s), \quad n=0,\dots,N-1,
\\
U_{0}=0,
\end{cases}
\end{equation}
where $r(\cdot)$ is either the exponential function $r(z) \coloneq e^{-z}$ or a consistent and $A$-stable rational function that satisfies $r(\infty) = 0$. For an operator $A$ admitting a bounded $H^\infty$-calculus, Li and Xie~\cite{Li_Xie_timeDSMR} proved that the implicit Euler scheme, given by \eqref{eq:temporal scheme intro} with $r(z) = (1+z)^{-1}$, exhibits \textit{discrete-in-time} SMR. However, their accompanying maximal estimate was suboptimal, suffering from an arbitrarily small $\epsilon$-loss in spatial regularity. 
This framework was subsequently generalized by Veraar and the author in \cite{DSMR}, demonstrating that discrete-in-time SMR is equivalent to its continuous-in-time counterpart for any function $r(\cdot)$ as above, notably without requiring the auxiliary assumption of a bounded $H^\infty$-calculus. Moreover, when $A$ does admit a bounded $H^\infty$-calculus, this framework eliminates the aforementioned $\epsilon$-loss, yielding the sharp maximal estimate.

In this paper, we investigate the \textit{fully discrete} SMR and the accompanying sharp maximal estimate of the full discretization of \eqref{eq:SPDE intro}, namely
\begin{equation}\label{eq:fully discrete scheme intro}
    \begin{cases}
		U^h_{n+1} \coloneq  r(\tau A_h)U^h_{n} +  r(\tau A_h) \int_{t_{n}}^{t_{n+1}} P_h g(s) \, dW(s), \quad n=0,\dots,N-1,
		\\
		U^h_0\coloneq 0,
	\end{cases}
\end{equation}
for a second-order elliptic operator of the form $A=-\nabla \cdot a\nabla + b \cdot \nabla + c$ with Dirichlet boundary conditions on a smooth, bounded, convex domain $\O\subset \R^3$. In the presence of lower-order terms, the results below concern these equations and schemes with $A$ and $A_h$ replaced by $\lambda_0+A$ and $\lambda_0+A_h$, respectively, for a sufficiently large shift $\lambda_0\ge0$ independent of $h$ and $\tau$.
Furthermore, because our analysis relies on establishing a bounded $H^\infty$-calculus for the discrete operator $\lambda_0+ A_h$, uniformly in the mesh size $h$, we simultaneously obtain the discrete-in-space stochastic maximal regularity of \eqref{eq:spatial_scheme_intro} as a direct corollary.
While we focus on the Dirichlet case, we expect that the results can be extended to other boundary conditions, such as the Neumann case, using similar techniques.
The fully discrete SMR of \eqref{eq:fully discrete scheme intro} was recently proved by Li and Zhou~\cite{LiZhou2025_fully_discrete_AC} for the specific case of the discrete Dirichlet Laplacian ($A_h=-\Delta_h$) using the implicit Euler scheme ($r(z)=(1+z)^{-1}$), but with a suboptimal maximal estimate. They subsequently utilized this result to establish the convergence of a fully discrete implicit Euler discretization scheme of the three-dimensional stochastic Allen--Cahn equation. By generalizing both the underlying spatial operator and the temporal approximation scheme, and providing a sharp maximal estimate, the framework developed here is expected to pave the way for establishing sharp stability estimates and convergence rates for a much broader class of second-order SPDEs.

\subsection{Main results} 
We formulate some of the main results of the paper. For the sake of clarity, we restrict our focus to operators without lower-order terms. Let $\O$ be a bounded, convex domain in $\R^3$ of class $C^2$. Let $a= (a_{ij})$ be a real-valued, symmetric, and elliptic matrix with $a_{ij} \in L^\infty (\O)$, and let  $A=-\nabla \cdot a\nabla$ denote the operator associated with the form 
\begin{equation*}
     \mathfrak a (u,v) \coloneq (a\nabla u, \nabla v)_{L^2} , \quad u,v \in H^1_0(\O).
\end{equation*}
Here, $(\cdot,\cdot)_{L^2}$ denotes the inner product in $L^2(\O)$.
We note that the Dirichlet boundary conditions of $A$ are encoded in the domain $H^1_0(\O)$ of the form $\mathfrak a$. 
For $q \in (1,\infty)$ let $X^q_0 \coloneq L^q(\O)$ and let $A_q$ denote the $L^q$ realization of $A$.
Then $A_q$ is an invertible sectorial operator on $X^q_0$ and $A_q$ has a bounded $H^\infty$-calculus of angle zero (see Section \ref{section:The operator Aq}). For $\alpha \in (0,1]$ we define the fractional space $X^q_\alpha=D(A_q^\alpha)$ endowed with the homogeneous graph norm.

Let $S_h\subset H^1_0(\O)$ be the finite element space consisting of piecewise linear elements as defined in Section \ref{section: FEM},
and let $A_h \colon S_h \to S_h$ denote the spatial discretization of $A$ given by 
$$ (A_h u_h,v_h)_{L^2} \coloneq \mathfrak a (u_h,  v_h) , \quad u_h,v_h \in S_h.$$
For $q \in (1,\infty)$ let $X^q_{0,h}$ denote $S_h$ with the $L^q(\O)$-norm. Let $P_h$ denote the $L^2$-orthogonal projection onto $S_h$ given by 
\begin{equation*}
    (u-P_hu,v_h)_{L^2} = 0 , \quad u \in L^2(\O), v_h \in S_h
\end{equation*}
and let $R_h \colon H^1_0(\O) \to S_h$ denote the Ritz projection given by 
$$ \mathfrak a(u-R_hu,v_h)=0, \quad u \in H^1_0(\O),\, v_h \in S_h.$$

Throughout this manuscript, we impose the following assumption: 
\begin{assumption}\label{main assumptions}
Let $q \in (1,\infty)$ and let $q'=q/(q-1)$.
\begin{enumerate}[(i)]
       \item \label{main assumptions: elliptic reg} (Elliptic regularity at $q$ and $q'$) For each $r\in\{q,q'\}$, it holds $X^r_1=W^{2,r}(\O)\cap W^{1,r}_0(\O)$, and there is a constant $C>0$ such that
    \begin{equation}\label{ineq:W^2q regularity}
       C^{-1} \|u\|_{W^{2,r}} \le  \|u\|_{X^r_1} \le C \|u\|_{W^{2,r}}, \quad u \in X^r_1.
    \end{equation}
    \item \label{main assumptions: stability of Rh} ($W^{1,q}$ stability of $R_h$) The Ritz projection $R_h$
is stable on $W^{1,q}(\O)$, that is, there is $C>0$ independent of $h$ such that  
\begin{equation}\label{ineq:assumption Rh stability}
     \|R_h u \|_{W^{1,q}(\O)}  \le C \|u\|_{W^{1,q}(\O)}, \quad u \in W^{1,q}_0(\O).
\end{equation}
\end{enumerate}
\end{assumption}
Some remarks on Assumption \ref{main assumptions} are in order.
\begin{remark} \label{remark:assumptions}
\begin{enumerate}[(1)]
    \item    
    The elliptic regularity estimate \eqref{ineq:W^2q regularity} is satisfied for every $q \in (1,\infty)$ provided the coefficients are sufficiently smooth; specifically, it holds when $a_{ij} \in W^{1,\infty}(\O)$ (see \cite[Theorem 9.14]{GT_pde_book}). For further refinements and sharper results, we refer the reader to \cite{CFF_elliptic_regularity,Vitanza_elliptic_regularity}. Notably, the convexity of the domain is not required in these results. 
\item 
    The stability estimate \eqref{ineq:assumption Rh stability} is a standard assumption and is well studied in the literature. The case $q=2$ holds trivially by the  ellipticity of the coefficient matrix. Moreover, one typically establishes the endpoint case
    \begin{equation}\label{ineq:Stability of Rh on W1infty}
        \|R_h u \|_{W^{1,\infty}(\O)}  \le C \|u\|_{W^{1,\infty}(\O)}, \quad u \in W^{1,\infty}(\O)\cap H^1_0(\O) 
    \end{equation}
    and subsequently applies an interpolation argument to get the range $q \in (2,\infty)$. The range $q\in(1,2)$ is then proved by a duality argument provided the operator $A$ satisfies the elliptic $W^{-1,q}$-regularity estimate
    \begin{equation}\label{ineq:W1q regularity}
        \|u\|_{W^{1,q}(\O)} \le C \|Au\|_{W^{-1,q}(\O)}, \quad u \in W^{1,q}_0(\O).
    \end{equation}
    Indeed, let $q \in (1,2)$ and  $q'=q/(q-1) \in (2,\infty)$. Then by \eqref{ineq:W1q regularity}, the definition of the Ritz projection, and its  $W^{1,q'}$ stability, we obtain  
    \begin{align*}
        \|R_hu\|_{W^{1,q}(\O)} & \le C \sup_{\phi \in W^{1,q'}_0(\O)} \frac{\mathfrak a(R_hu,\phi)}{\|\phi\|_{W^{1,q'}(\O)}} = C \sup_{\phi \in W^{1,q'}_0(\O)} \frac{\mathfrak a(u,R_h \phi)}{\|\phi\|_{W^{1,q'}(\O)}} 
        \\
        & \le C \sup_{\phi \in W^{1,q'}_0(\O)} \frac{\|u\|_{W^{1,q}(\O)} \|R_h \phi\|_{W^{1,q'}(\O)}}{\|\phi\|_{W^{1,q'}(\O)}} \le C \|u\|_{W^{1,q}(\O)}
    \end{align*}
    for $u\in W^{1,q}_0(\O)$. The regularity estimate \eqref{ineq:W1q regularity} is standard and is known to hold for  VMO coefficients; see \cite{Dong_Kim2010}.

    The $W^{1,\infty}$ stability estimate \eqref{ineq:Stability of Rh on W1infty} was first investigated by 
    Rannacher \cite{Rannacher76} and Schatz and Wahlbin \cite{Schatz_wahlbin77} for smooth domains in $\R^d$ ($d\ge 2$). Geissert \cite{Geissert-applications-of-DMR} provided a proof using kernel estimates and even allowed for complex-valued coefficients.  The extension to non-smooth, convex polygonal domains in $\R^2$ was first done by Rannacher and Scott \cite{Rannacher_Scott82}. In their monograph, Brenner and Scott \cite[Theorem 8.1.11]{Brenner_Scott_book} proved the result for more general elements and for non-smooth convex polyhedral domains in $\R^3$ with $W^{1,12/5}$-coefficients under the auxiliary assumption that $A$ possesses elliptic $L^{q}$-regularity for $1<q<\mu$ with $\mu>3$. We note that their arguments can be applied mutatis mutandis to smooth domains for piecewise linear elements as in Section \ref{section: FEM}.
    For the specific case of the Laplacian $A=-\Delta$, the requirement for this extra regularity was subsequently removed in Guzm\'{a}n et al.\,\cite{Guzman_etal_09} using newly established H\"older estimates for the Green function on convex polyhedral domains. Furthermore, for the Laplacian, Diening et al.\,\cite{Diening_et_al24} demonstrated that  $R_h$ is dominated by the maximal operator $\mathcal M$. This implies the stability of $R_h$ in every function space where $\mathcal M$ is bounded, including (weighted) $L^q$ spaces. 

    \item \label{remark:assumptions sufficient condition} Summarizing the above observations, we conclude that Assumption \ref{main assumptions} is satisfied for every $q \in (1,\infty)$ provided the coefficients  $a_{ij} \in W^{1,\infty}(\O)$.
\end{enumerate}
\end{remark}

We now present the first main result of this paper, which is a special case of Theorem \ref{thm:H-calc for Ah},  namely the boundedness of the $H^\infty$-calculus of $A_h$, uniformly in the mesh size $h$. 
\begin{theorem}[Uniformly bounded $H^\infty$-calculus]\label{theorem:main calc theorem intro}
Let $q \in (1,\infty)$ and suppose that Assumption \ref{main assumptions} holds. Then there exists a constant $C>0$, independent of $h$, such that  $A_h$ has a bounded $H^\infty$-calculus on  $X^q_{0,h}$ of angle zero and constant $C$.
\end{theorem}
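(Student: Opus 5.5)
Since $A_h$ is self-adjoint and positive definite on $(S_h,(\cdot,\cdot)_{L^2})$ (the form $\mathfrak a$ is symmetric and coercive on $H^1_0(\O)$), it has a bounded $H^\infty$-calculus of angle zero on $X^2_{0,h}$ with constant $1$. This follows from the spectral theorem. The task is therefore to transfer this to $L^q$, uniformly in $h$. I would use the standard route via Gaussian-type or resolvent-type estimates: the $L^q$-boundedness of the $H^\infty$-calculus follows from the $L^2$ calculus once one has suitable off-diagonal bounds. Kernel estimates for discrete operators are delicate, however, so I would rather proceed by comparison with the continuous operator $A_q$, which by Section \ref{section:The operator Aq} already has a bounded $H^\infty$-calculus of angle zero on $X^q_0$.

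The plan is a perturbation/comparison argument at the level of resolvents. For $f\in H^\infty(\Sigma_\sigma)\cap H^\infty_0$ and $u_h\in S_h$, I would write
\[
f(A_h)P_h = P_h f(A_q) P_h + \frac{1}{2\pi i}\int_{\partial\Sigma_{\nu}} f(\lambda)\bigl[(\lambda-A_h)^{-1}P_h - P_h(\lambda-A_q)^{-1}\bigr]\,d\lambda .
\]
The first term is bounded on $L^q$ uniformly in $h$, since $P_h$ is $L^q$-stable on quasi-uniform meshes. For the error term, the goal is an estimate of the form
\[
\bigl\|(\lambda-A_h)^{-1}P_h - P_h(\lambda-A_q)^{-1}\bigr\|_{\mathcal L(L^q)} \le C\min\Bigl\{\frac{1}{|\lambda|},\, \frac{h^2}{1}\Bigr\}\cdot(\text{something integrable against } |d\lambda|/|\lambda|),
\]
for instance $C\min\{|\lambda|^{-1}, h^{2}\}\,\min\{1,(h^2|\lambda|)^{-\epsilon}\}$, or more simply $C\,|\lambda|^{-1}\min\{h^2|\lambda|,(h^2|\lambda|)^{-1}\}^{\delta}$. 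Such an estimate makes $\int \|f\|_\infty |\cdots||d\lambda|$ bounded uniformly in $h$.

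For small $|\lambda|\lesssim h^{-2}$, I would write the resolvent difference as $(\lambda-A_h)^{-1}(P_h A_q - A_h R_h)(\ldots)$. Equivalently, I would use $A_hR_h=P_hA$ and the decomposition $(\lambda-A_h)^{-1}P_h - P_h(\lambda-A_q)^{-1}=(\lambda-A_h)^{-1}(P_h-R_h)A_q(\lambda-A_q)^{-1}\cdot$ plus a $\lambda (R_h - P_h)$ term. This reduces everything to the $L^q$ error $\|(I-R_h)v\|_{L^q}\le Ch^2\|v\|_{W^{2,q}}$. That error follows from Assumption \ref{main assumptions}: the $W^{1,q}$ stability gives a $W^{1,q}$ error of order $h$, and a duality (Aubin–Nitsche) argument with $W^{2,q'}$ elliptic regularity at $q'$ upgrades it to $h^2$ in $L^q$. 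Combined with $\|v\|_{W^{2,q}}\simeq\|A_qv\|_{L^q}$ and the uniform $L^q$ resolvent bound $\|\lambda(\lambda-A_h)^{-1}\|\le C$, this gives the $h^2$ bound.

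For large $|\lambda|\gtrsim h^{-2}$, I would use the trivial $1/|\lambda|$ bound together with inverse estimates on $S_h$ ($\|A_h u_h\|_{L^q}\lesssim h^{-2}\|u_h\|_{L^q}$) to gain decay. The main obstacle is the prerequisite \emph{uniform $L^q$-sectoriality} of $A_h$ on sectors of arbitrarily small angle, i.e.\ $\|\lambda(\lambda-A_h)^{-1}\|_{\mathcal L(X^q_{0,h})}\le C_\theta$ for $|\arg\lambda|\ge\theta$. I would obtain this by the same comparison in the regime $|\lambda|\le h^{-2}$, via a bootstrap on the resolvent difference. For $|\lambda|\ge h^{-2}$, I would argue on each element by discrete energy/inverse estimates, treating $\lambda-A_h$ as a perturbation of $\lambda$. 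The interpolation-and-duality argument to get both $q$ and $q'$ is where Assumption \ref{main assumptions} at both exponents is essential. Finally, to get angle zero, I would apply the above to every $\sigma>0$, with constants independent of $h$. The full boundedness of the calculus then follows from the McIntosh convergence lemma.
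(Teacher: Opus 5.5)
Your proposal has a genuine gap in the step you yourself flag as the main obstacle: uniform $L^q$-sectoriality, $\|\lambda R(\lambda,A_h)\|_{\L(X^q_{0,h})}\le C$ on all of $\C\setminus\Sigma_\nu$. Your two arguments do not meet.

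The bootstrap uses $R(\lambda,A_h)P_h-P_hR(\lambda,A_q)=A_hR(\lambda,A_h)(P_h-R_h)R(\lambda,A_q)$ and $\|P_h-R_h\|_{\L(X^q_1,X^q_0)}\le Ch^2$. It yields
\[
|\lambda|\,\|R(\lambda,A_h)\|\le C+Ch^2|\lambda|\,\bigl(1+|\lambda|\,\|R(\lambda,A_h)\|\bigr),
\]
which closes only for $|\lambda|\le\omega_0h^{-2}$ with $C\omega_0<1$ (this part is fine). Treating $\lambda-A_h$ as a perturbation of $\lambda$ via a Neumann series needs $|\lambda|>\|A_h\|_{\L(X^q_{0,h})}$. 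But $\|A_h\|\ge\max\sigma(A_h)$, which is of exact order $h^{-2}$ with a mesh-dependent constant, so there is no reason for the two ranges to overlap.

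In the window $\omega_0h^{-2}\le|\lambda|\lesssim\|A_h\|$, a point $\lambda\in\partial\Sigma_\nu$ lies at distance only $|\lambda|\sin\nu$ from $\sigma(A_h)\subset[\delta,Ch^{-2}]$, and $\lambda$ and $A_h$ are of the same size. The $L^2$ bound there is free by self-adjointness, but the $L^q$ bound is not. No elementwise argument gives it, because $R(\lambda,A_h)$ is non-local; what is needed is quantitative off-diagonal decay of the discrete resolvent at scale $h$. The paper (Lemma~\ref{lemma:sectoriality of Ah}) obtains this from the bound $\|G_h^x(\cdot,\lambda)\|_{L^1}\le C|\lambda|^{-1}$ for $|\lambda|\ge\omega_0h^{-2}$ on the discrete resolvent Green's function. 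That bound comes from weighted $L^2$ energy estimates in the spirit of Bakaev--Thom\'ee, followed by Schur's lemma. Every later step, yours and the paper's, uses $\|A_hR(\lambda,A_h)\|\le C$ on the whole sector, so without this lemma the argument is incomplete.

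A secondary point concerns the tail of your contour integral. Because you compare with $P_hf(A_q)P_h$ and $A_q$ is unbounded, you cannot truncate the contour. You therefore need the integrand to decay faster than $|\lambda|^{-1}$ for $|\lambda|\ge h^{-2}$, since otherwise the integral diverges logarithmically.

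The inverse estimate $\|A_h\|\lesssim h^{-2}$ only handles $R(\lambda,A_h)P_h-\lambda^{-1}P_h=\lambda^{-1}A_hR(\lambda,A_h)P_h=O(h^{-2}|\lambda|^{-2})$. The other piece, $P_hR(\lambda,A_q)P_h-\lambda^{-1}P_h=\lambda^{-1}P_hA_qR(\lambda,A_q)P_h$, is small not because of any property of $A_h$ but because $P_h$ maps into $S_h$:
\[
\|A_qR(\lambda,A_q)w_h\|_{L^q}\le\|A_q^{1/2}R(\lambda,A_q)\|_{\L(L^q)}\|A_q^{1/2}w_h\|_{L^q}\le C|\lambda|^{-1/2}h^{-1}\|w_h\|_{L^q},\quad w_h\in S_h,
\]
using $X^q_{1/2}=W^{1,q}_0$ and the inverse estimate. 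This gives $C|\lambda|^{-1}(h^2|\lambda|)^{-1/2}$, which is integrable, so this part of your route can be completed. It works only if you keep the right-hand $P_h$ in the integrand (as your displayed identity actually requires, though you dropped it there) or argue by duality at $q'$.

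The paper avoids the tail entirely by comparing with $f(J_h)$, where $J_h=(h^2+A_q)(1+h^2A_q)^{-1}$. This operator inherits the $H^\infty$-constant of $A_q$ but has norm $\lesssim h^{-2}$. Hence both resolvents are $O(h^2)$ on $\Re z=c^*h^{-2}$ by a Neumann series, and the contour is cut off there.
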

Theorem \ref{theorem:main calc theorem intro} follows from  Theorem \ref{theorem:Hcalc for Ah} in Section \ref{sec:principal part of Ah}. This abstract result provides the foundation for the fully discrete stochastic maximal regularity results discussed in the introduction. To state these results precisely, we first establish some necessary notation.
For $\alpha \in (0,1]$ we define the discrete fractional  space $X^q_{\alpha,h}$ to be $S_h$ endowed with the $\|A_h^{\alpha}\cdot\|_{L^q(\O)}$-norm. Theorem \ref{theorem:main calc theorem intro} and Lemma \ref{lem:BIP}  imply that $X^q_{\alpha,h} $ coincides with the complex interpolation space $[X^q_{0,h},X^q_{1,h}]_\alpha$ and the norms are equivalent, uniformly in $h$, that is, there is $C>0$ independent of $h$ such that 
\begin{equation*}
    C^{-1} \|u_h\|_{X^q_{\alpha,h}} \le \|u_h\|_{[X^q_{0,h},X^q_{1,h}]_\alpha} \le C \|u_h\|_{X^q_{\alpha,h}}, \quad \alpha \in (0,1), \, q \in (1,\infty).
\end{equation*}
A vital component of this framework is the following characterization of the discrete fractional spaces:
 \begin{equation}\label{Xah norm equiv Xa norm intro}
			X^q_{\alpha,h} = X^q_\alpha \cap S_h \quad \text{ with equivalent norms uniformly in $h$, for every } \alpha \in [0,1/2],
\end{equation}
i.e., there is $C>0$ independent of $h$ such that 
$$
C^{-1} \|u_h\|_{X^q_{\alpha}} \le   \|u_h\|_{X^q_{\alpha,h}} \le C\|u_h\|_{X^q_{\alpha}} , \quad u_h \in S_h,\, \alpha \in [0,1/2].
$$
Such a characterization is essential, as it allows us to identify the abstract discrete spaces $X^q_{1/2,h}$ and the real interpolation $(X^q_{0,h},X^q_{1,h})_{1/2-1/p,p}$ appearing in the fully discrete SMR estimates with standard Sobolev and Besov spaces. The equivalence \eqref{Xah norm equiv Xa norm intro} follows from Corollary \ref{cor: Xah norm equiv Xa norm} (see also Corollary \ref{cor: Xah norm equiv Xa norm with lower-order terms}).

Let $T\in (0,\infty]$. Let $h>0$ and $g \in L^p_{\mathbb F}(\Omega \times (0,T); \gamma(H, X^q_{0}))$ be fixed but arbitrary.  Let $r(\cdot)$ be either the exponential function $r(z) \coloneq e^{-z}$, or a consistent and $A$-stable rational function that satisfies $r(\infty) = 0$ (see Section \ref{section:SMR} for the definitions).  We say that $\tau>0$ is {\em admissible} if $T/\tau\in \N$ or $T =\infty$. For an admissible $\tau$ let $\pi_\tau \coloneq \{t_n = n\tau \colon 0 \le n \le N \}$ be a uniform partition of $[0,T]$, where $N = T/\tau$ if $T<\infty$, and $N=\infty$ if $T = \infty$.  We define $(U^h_n)_{n=0}^N$ recursively by
\begin{equation} \label{Eq:Definition of approximation scheme fully discrete intro}
	\begin{cases}
		U^h_{n+1} \coloneq  r(\tau A_h)U_{n}^h +  r(\tau A_h) \int_{t_{n}}^{t_{n+1}} P_h g(s) \, dW(s),\quad n=0,\dots,N-1,
		\\
		U^h_0\coloneq 0.
	\end{cases}
	\end{equation}	

We are now ready to present the second main result of this paper:

\begin{theorem}[Fully discrete SMR]
    \label{thm:fully discrete SMR intro}
 Let $q \in [2,\infty)$ and $p \in (2,\infty)$, where we allow $p=2$ if $q=2$. Suppose that Assumption \ref{main assumptions} holds. Then there exists a constant $C>0$, independent of $h$ and $\tau$, such that for every $g \in L^p_{\mathbb F}(\Omega \times (0,T); \gamma(H, X^q_{0}))$, the discrete solution $(U^h)_{n=0}^N$ of \eqref{Eq:Definition of approximation scheme fully discrete intro} satisfies
 \begin{equation}\label{ineq:space-time DSMR with discrete trace space intro}
      \begin{aligned}
          \Big( \E \sup_{n=1,\dots, N} \| U^h_n \|^p_{(X^q_{0,h},X^q_{1,h})_{1/2-1/p,p}} \Big )^{1/p}& +  \Big( \E \sum_{n=1}^{N-1} \tau  \|  U^h_n \|^p_{X^q_{1/2,h}} \Big)^{1/p} 
          \\
          & \le C \|P_h g\|_{L^p(\Omega \times (0,T); \gamma(H, X^q_{0,h}))} 
      \end{aligned}
\end{equation}
and
   \begin{equation}\label{ineq:space-time DSMR intro}
         \begin{aligned}
     \Big ( \E \sup_{n=1,\dots, N} \| U^h_n \|^p_{(X^q_0,X^q_1)_{1/2-1/p,p}} \Big)^{1/p}&+  \Big( \E \sum_{n=1}^{N-1} \tau  \|  U^h_n \|^p_{X^q_{1/2}} \Big)^{1/p} 
      \\
      &\le C \|P_h g\|_{L^p(\Omega \times (0,T); \gamma(H, X^q_{0,h}))} .
      \end{aligned}
   \end{equation}
   Furthermore, the right-hand sides of both estimates can be replaced by  $C\|g\|_{L^p(\Omega \times (0,T); \gamma(H, X^q_{0}))}$.  
\end{theorem}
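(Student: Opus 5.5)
The plan is to reduce \eqref{ineq:space-time DSMR with discrete trace space intro} to the abstract discrete-in-time SMR framework of \cite{DSMR}, applied on the Banach space $X^q_{0,h}$ with the operator $A_h$. The whole point is that every constant must be tracked uniformly in $h$. Then \eqref{ineq:space-time DSMR intro} follows by transferring the discrete norms to continuous ones via \eqref{Xah norm equiv Xa norm intro}.

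\emph{Step 1: uniform abstract DSMR.} By Theorem \ref{theorem:main calc theorem intro}, $A_h$ has a bounded $H^\infty$-calculus of angle zero on $X^q_{0,h}$, with constant $C$ independent of $h$. Since $X^q_{0,h}$ is a closed subspace of $L^q(\O)$ with $q\in[2,\infty)$, it inherits UMD, type $2$ and the relevant $\gamma$-space properties with constants independent of $h$. Continuous-time SMR for $A_h$ on $X^q_{0,h}$ then holds with constants depending only on the calculus constant and on these geometric constants. This gives both the $L^p(X^q_{1/2,h})$ bound and the sharp maximal estimate in $(X^q_{0,h},X^q_{1,h})_{1/2-1/p,p}$, for $p>2$, and for $p=2$ when $q=2$ (the Hilbert case). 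The equivalence theorem of \cite{DSMR} transfers this to the scheme \eqref{Eq:Definition of approximation scheme fully discrete intro}, for $r$ exponential or rational, consistent, $A$-stable with $r(\infty)=0$. It also yields the sharp discrete maximal estimate, because the $H^\infty$-calculus is available.

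The main obstacle is to check that the constants in \cite{DSMR} depend only on the sectoriality and $H^\infty$ constants, the angle, $p$, $q$ and the function $r$. In particular they must not depend on the dimension of $S_h$ or on norms of $A_h$, which blow up as $h\to0$. I would go through the proofs there to confirm this. In case of doubt, I would re-derive the key square-function and maximal bounds directly from the uniform calculus, using $\gamma$-boundedness of $\{r(\tau A_h)^n\}$ and of $\{(\tau A_h)^{1/2} r(\tau A_h)^n\}$. This $\gamma$-boundedness would come from the calculus via the uniform $H^\infty$ bounds of the corresponding holomorphic functions. Applying this with the noise $P_hg$ gives \eqref{ineq:space-time DSMR with discrete trace space intro}.

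\emph{Step 2: passing to continuous spaces.} By \eqref{Xah norm equiv Xa norm intro} with $\alpha=1/2$, $\|U^h_n\|_{X^q_{1/2}}\le C\|U^h_n\|_{X^q_{1/2,h}}$, which handles the sum term. For the trace term, note that $1/2-1/p\in(0,1/2)$. Reiteration writes $(X_0,X_1)_{\theta,p}=(X_0,X_{1/2})_{2\theta,p}$ both in the discrete setting and in the continuous one; in the discrete setting this uses the uniform BIP of Lemma \ref{lem:BIP}, so the reiteration constants are independent of $h$. The uniform equivalences $X^q_{0,h}\hookrightarrow X^q_0$ and $X^q_{1/2,h}\hookrightarrow X^q_{1/2}$ then give a bounded map $(X^q_{0,h},X^q_{1/2,h})_{2\theta,p}\to(X^q_0,X^q_{1/2})_{2\theta,p}$, with norm independent of $h$. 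Only this direction is needed, since we are estimating the continuous norm from above. This yields \eqref{ineq:space-time DSMR intro}.

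\emph{Step 3: replacing $P_hg$ by $g$.} It remains to bound $\|P_hg\|_{L^p(\Omega\times(0,T);\gamma(H,X^q_{0,h}))}\le C\|g\|_{L^p(\Omega\times(0,T);\gamma(H,X^q_0))}$. This follows from the uniform $L^q$-stability of $P_h$, which I would take from Section \ref{section: FEM}, together with the ideal property of $\gamma$-radonifying operators, $\|P_h\circ G\|_\gamma\le\|P_h\|\,\|G\|_\gamma$. The stochastic integral commutes with the bounded operator $P_h$, so this bound passes directly into the scheme.
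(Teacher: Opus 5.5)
Your proposal is correct and is essentially the paper's proof: the uniform $H^\infty$-calculus on $X^q_{0,h}$ is fed into Theorem~\ref{thm:DSMR in Lq spaces}, whose constants depend only on $p$, the geometry of $X_0\subset L^q$, $r$ and the sectoriality/calculus data. The uniform norm equivalences of Corollaries~\ref{cor: Xah norm equiv Xa norm} and~\ref{cor:discrete vs continuous real int} then pass to continuous spaces, and the $L^q$-stability of $P_h$ together with the ideal property replaces $P_hg$ by $g$. Your uniform reiteration via uniform BIP is an equally valid substitute for the paper's route through uniform semigroup bounds and \cite[Proposition 2.2.15]{Lunardi_analytic_book}; note only that for $p=q=2$ the trace space degenerates to $X_0$, so Step~2 is trivial there rather than covered by your claim $1/2-1/p\in(0,1/2)$.
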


As a byproduct we obtain the following discrete-in-space stochastic maximal regularity:
\begin{theorem}[Discrete-in-space SMR] \label{thm:discrete space SMR intro}
Let $q \in [2,\infty)$ and $p \in (2,\infty)$, where we allow $p=2$ if $q=2$. Suppose that Assumption \ref{main assumptions} holds. Then there exists a constant $C>0$, independent of $h$, such that for every $g \in L^p_{\mathbb F}(\Omega \times (0,T); \gamma(H, X^q_{0}))$, the discrete mild solution  $u_h$ of 
\begin{equation*}
\begin{cases}
    du_h(t) + A_h u_h(t) \, dt =  P_h g(t) \, dW(t), \quad t \in (0,T),
    \\
    u_h(0) = 0,
\end{cases}
\end{equation*}
satisfies
 \begin{equation}\label{ineq:space DSMR with discrete trace space intro}
 \begin{aligned}
        \Big (\E \sup_{0\le t \le T} \|u_h(t)\|^p_{(X^q_{0,h},X^q_{1,h})_{1/2-1/p,p}} \Big )^{1/p} & +  \|  u_h \|_{L^p(\Omega \times (0,T);X^q_{1/2,h})} 
        \\
        &\le C \|P_hg\|_{L^p(\Omega \times (0,T); \gamma(H, X^q_{0,h}))} 
 \end{aligned}
    \end{equation}
    and 
    \begin{equation}\label{ineq:space DSMR intro}
   \begin{aligned}
       \Big ( \E \sup_{0\le t \le T} \|u_h(t)\|^p_{(X^q_{0},X^q_{1})_{1/2-1/p,p}} \Big )^{1/p} &+  \|  u_h \|_{L^p(\Omega \times (0,T);X^q_{1/2})} 
       \\
       &\le C \|P_hg\|_{L^p(\Omega \times (0,T); \gamma(H, X^q_{0,h}))}. 
   \end{aligned}
\end{equation}
Furthermore, the right-hand sides of both estimates can be replaced by  $C\|g\|_{L^p(\Omega \times (0,T); \gamma(H, X^q_{0}))}$.  
\end{theorem}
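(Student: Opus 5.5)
The proof will apply the abstract continuous-in-time stochastic maximal regularity theorem (the van Neerven--Veraar--Weis result, \cite{NVWSMR}) to the finite-dimensional Banach space $X^q_{0,h}=(S_h,\|\cdot\|_{L^q})$ and the operator $A_h$. The theorem gives the estimate with a constant depending only on $p$, $q$, the $H^\infty$-angle, the $H^\infty$-constant and the UMD/type constants of the underlying space. Since $X^q_{0,h}$ is a closed subspace of $L^q(\O)$, it is a UMD space of type 2 for $q\in[2,\infty)$, and its UMD and type constants are bounded by those of $L^q(\O)$, hence independent of $h$. By Theorem \ref{theorem:main calc theorem intro}, $A_h$ has a bounded $H^\infty$-calculus of angle zero $<\pi/2$ with constant $C$ independent of $h$. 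Also, $A_h$ is invertible, uniformly in $h$, because the calculus is bounded and $\sigma(A_h)\subset[\lambda_1,\infty)$ with $\lambda_1$ bounded below by the coercivity of $\mathfrak a$. The SMR theorem therefore gives \eqref{ineq:space DSMR with discrete trace space intro}, namely the $L^p(\Omega\times(0,T);D(A_h^{1/2}))$ bound together with the maximal estimate in the trace space $(X^q_{0,h},X^q_{1,h})_{1/2-1/p,p}$. The case $p=2$ is allowed only when $q=2$, which is the Hilbert-space setting and matches the standard statement.

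To pass to \eqref{ineq:space DSMR intro}, I will use the norm identification \eqref{Xah norm equiv Xa norm intro} with $\alpha=1/2$: $\|u_h\|_{X^q_{1/2}}\le C\|u_h\|_{X^q_{1/2,h}}$ uniformly in $h$, which handles the $L^p$-term. For the trace term, I want
\[
\|u_h\|_{(X^q_0,X^q_1)_{1/2-1/p,p}}\le C\|u_h\|_{(X^q_{0,h},X^q_{1,h})_{1/2-1/p,p}}, \qquad u_h\in S_h .
\]
Since $\theta=1/2-1/p<1/2$, reiteration gives $(X_0,X_1)_{\theta,p}=(X_0,X_{1/2})_{2\theta,p}$ with equivalent norms, both in the continuous setting and in the discrete one. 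For the discrete couple the reiteration constants must be uniform in $h$. This holds because $X^q_{1/2,h}=[X^q_{0,h},X^q_{1,h}]_{1/2}$ uniformly, as stated after Theorem \ref{theorem:main calc theorem intro} via Lemma \ref{lem:BIP}. The inclusion map $S_h\hookrightarrow L^q$ is bounded $X^q_{0,h}\to X^q_0$ with norm $1$, and bounded $X^q_{1/2,h}\to X^q_{1/2}$ uniformly by \eqref{Xah norm equiv Xa norm intro}. Real interpolation of this map then gives the desired bound.

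The main obstacle is exactly this uniform reiteration step. The cleanest route is to avoid abstract reiteration constants: the $K$-functional of $(X^q_{0,h},X^q_{1/2,h})$ dominates that of $(X^q_0,X^q_{1/2})$ trivially. It remains to compare $(X^q_{0,h},X^q_{1,h})_{\theta,p}$ with $(X^q_{0,h},X^q_{1/2,h})_{2\theta,p}$ uniformly, and I will do this through the semigroup characterization of real interpolation spaces for sectorial operators. That characterization has constants depending only on the sectoriality constants of $A_h$, which are uniform in $h$.

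Finally, for the variant with $\|g\|$ on the right-hand side, it suffices that $\|P_h\|_{\mathcal L(L^q)}\le C$ uniformly in $h$, which follows from the standard $L^q$-stability of the $L^2$ projection on quasi-uniform meshes from Section \ref{section: FEM}. The operator $P_h$ extends to $\gamma(H,L^q)$ with the same norm by the ideal property of $\gamma$-radonifying operators, so $\|P_hg\|_{L^p(\Omega\times(0,T);\gamma(H,X^q_{0,h}))}\le C\|g\|_{L^p(\Omega\times(0,T);\gamma(H,X^q_0))}$.
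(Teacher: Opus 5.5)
Your proposal is correct and is essentially the paper's proof. You feed the uniform $H^\infty$-calculus of $A_h$ on the isometric subspace $X^q_{0,h}\subset L^q$ into Theorem \ref{thm:SMR in Lq spaces}, then use the uniform equivalences of Corollaries \ref{cor: Xah norm equiv Xa norm} and \ref{cor:discrete vs continuous real int}; the paper likewise gets $h$-uniform reiteration for the discrete couple from $h$-uniform semigroup bounds and transfers via the inclusion map, which is your $K$-functional comparison. You then finish with the ideal property and $\|P_h\|_{\mathcal L(L^q)}\le C$, as the paper does.

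One small correction: a spectral inclusion plus a bounded calculus does not by itself give a uniform bound on $\|A_h^{-1}\|_{\mathcal L(X^q_{0,h})}$ for $q\neq 2$. That bound comes from Lemma \ref{lemma:sectoriality of Ah} (the resolvent estimate at $\lambda=0$) or from \eqref{ineq: stability of inverse of Ah}, and it is recorded in Theorem \ref{thm:H-calc for Ah}.
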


The proofs of Theorems \ref{thm:fully discrete SMR intro} and \ref{thm:discrete space SMR intro} are presented in Section \ref{sec:fully DSMR} (see Theorems \ref{thm:fully discrete SMR} and \ref{thm:space DSMR}).

\subsection*{Overview} 
In Section \ref{sec:Prel}, we present some essential mathematical background, covering sectorial operators and functional calculus, properties of the elliptic operator $A$, and some standard theory of finite elements. We also review stochastic integration theory and (discrete) stochastic maximal regularity.

Section \ref{sec:calculus proof} is devoted to the first main result of this paper (Theorem \ref{thm:H-calc for Ah}), which establishes a bounded $H^\infty$-calculus for the discrete operator $A_h$, uniformly in the mesh size $h$. 
The proof is structured in two stages. First, in Section \ref{sec:principal part of Ah}, we establish the result for the principal part $A_h^\#$ of $A_h$ (Theorem \ref{theorem:Hcalc for Ah}); this requires analyzing the specific properties of $A_h^\#$ in Section \ref{sec:properties of Rh} before concluding the proof for $A_h^\#$ in Section \ref{sec: proof of calculus for principal part}. Second, in Section \ref{sec:with lower-order}, we prove Theorem \ref{thm:H-calc for Ah} for the full operator $A_h$ via a perturbation argument.

Finally, Section \ref{sec:fully DSMR} presents our second main result (Theorem \ref{thm:fully discrete SMR}), establishing the fully discrete stochastic maximal regularity of \eqref{Eq:Definition of approximation scheme fully discrete intro} for a broad class of temporal approximation schemes.

\subsection*{Notation} Throughout this paper, $C$ denotes a generic positive constant that may change from line to line but remains independent of the mesh size $h$ and the step size $\tau$. For $a,b\in \R$ we will use the standard notation $a\lesssim b$ in case there is a generic constant $C>0$ such that $a\leq Cb$. Moreover, we write $a\eqsim b$ if $a\lesssim b$ and $b \lesssim a$. 

In the interest of brevity, the domain $\O$ is omitted from the notation of function spaces when no confusion is likely to arise; e.g., $L^q$ refers to $L^q(\O)$. The inner product of $L^2$ is denoted by $(\cdot,\cdot)_{L^2}$.
For $q \in (1,\infty)$ we set $X^q_0\coloneq L^q$ and define $X^q_{0,h}$ to be the finite element space $S_h$ endowed with the $L^q$-norm. 

Let $A$ be an elliptic operator in divergence form with Dirichlet boundary conditions, associated with the form $\mathfrak a$. We denote by $A_q$ the $L^q$-realization of $A$. The $L^2$-orthogonal projection onto the finite element space $S_h$ is denoted by $P_h$, and $R_h$ denotes the Ritz projection associated with $A$. Moreover, $A_h$ denotes the spatial finite element discretization of $A$.



  For $\nu \in (0,\pi/2)$, let $\Sigma_{\nu} \coloneq \{z\in \C\setminus\{0\}\colon  |\arg(z)|<\nu\}$ and let $\partial \Sigma_{\nu}$ denote its boundary oriented counterclockwise. For an operator $S$,  $R(\lambda, S)\coloneq (\lambda-S)^{-1}$ denotes the resolvent for $\lambda \in \rho(S)$.
  
  Finally, the function $r\colon \Sigma_{\theta}\to \C$ denotes a rational function or $r(z)=e^{-z}$.

\subsubsection*{Acknowledgments} 
 The author thanks Floris Roodenburg and Mark Veraar for their helpful comments and suggestions.

\section{Preliminaries}\label{sec:Prel}

\subsection{Sectorial operators, functional calculus, and interpolation}

For details on sectorial operators, semigroup theory, and functional calculus, the reader is referred to \cite{zbMATH01354832, Haase:2, analysis_volume_2, analysis_volume_3, kunstmann2004maximal}. We briefly recall some of the key concepts used throughout the paper. For details on interpolation theory the reader is referred to \cite{analysis_volume_1, Tri95}. We will rely on standard results on real and complex interpolation. 

Let $(A,D(A))$ be a closed operator on a Banach space $X$. The operator $A$ is called {\em sectorial} if the domain and the range of $A$ are dense in $X$ and there exists $\nu\in (0,\pi/2)$ such that $\sigma(A)\subseteq \overline{\Sigma_{\nu}}$
and there exists $C>0$ such that
\begin{equation}
\label{eq:sectorialA}
|\lambda|\|R(\lambda,A)\|_{\L(X)}\leq C, \ \ \lambda \in \C\setminus \overline{\Sigma_{\nu}}.
\end{equation}
The {\em angle of sectoriality} $\omega(A)\in[0,\pi)$ is defined as the infimum over all $\nu$ for which a $C$ exists such that \eqref{eq:sectorialA} holds. 

If $\omega(A)<\pi/2$, then $-A$ generates a strongly continuous semigroup $(e^{-tA})_{t\geq 0}$, which extends to a bounded analytic function on $\Sigma_\nu$ for some $\nu \in (0,\pi/2)$ (see \cite[Example 10.1.3]{analysis_volume_2}).

\subsubsection{Functional calculus} 
Let $H^1(\Sigma_{\theta})$ be the Hardy space consisting of all analytic functions $f\colon \Sigma_{\theta}\to \C$ such that 
\[\|f\|_{H^1(\Sigma_{\theta})} \coloneq \sup_{|\phi|<\theta} \int_{0}^\infty |f(s e^{i\phi})| \frac{ds}{s}<\infty.\]
Moreover, $H^\infty(\Sigma_{\theta})$ is the space of bounded analytic functions $f\colon\Sigma_{\theta}\to \C$ equipped with the supremum norm.

If $A$ is a sectorial operator of angle $\omega(A)\in [0,\pi)$ and $f\in H^1(\Sigma_{\theta})$ with $\theta\in (\omega(A),\pi)$ then one can define the bounded operator $f(A)$ by the Dunford integral (contour oriented counterclockwise)
 $$f(A) \coloneq  \frac{1}{2\pi i} \int_{\partial \Sigma_{\nu}} f(z) R(z,A) \,dz,$$
 where $\nu \in (\omega(A),\theta)$ is chosen arbitrarily  (see \cite[Section 10.2]{analysis_volume_2}). Moreover, there is a constant $C=C(\theta,A)$ such that 
 \begin{equation*} 
 	\sup_{t>0}	\|f(tA)\|_{\L(X)} \le C \|f\|_{H^1(\Sigma_\theta)}.
 \end{equation*}

The above $H^1$-calculus is useful and can be used for any sectorial operator. In many important cases one can even prove that for every $f\in H^1(\Sigma_{\theta}) \cap H^\infty(\Sigma_{\theta})$ one has
\begin{align}\label{eq:Hinfty}
 \|f(A)\|_{\L(X)}\leq C\|f\|_{H^\infty(\Sigma_{\theta})}.
\end{align}
In this case we say that $A$ has a {\em bounded $H^\infty(\Sigma_\theta)$-calculus}. One can show that \eqref{eq:Hinfty} uniquely extends to all $f\in H^\infty(\Sigma_{\theta})$. The infimum over all possible $\theta$ is called {\em the angle of the $H^\infty$-calculus}. 

 By now, large classes of sectorial operators $A$ are known to have a bounded $H^\infty$-calculus. One could even say that on $L^q$-spaces the counterexamples are typically only rather academic. A comprehensive list of examples can be found in the notes of \cite[Chapter 10]{analysis_volume_2}.

A general class of operators with a bounded $H^\infty$-calculus, in the case where $X$ is a Hilbert space, is given by the following: all operators $A$ for which 
$-A$ generates a contraction semigroup on $X$. In this case, $\omega(A)$ coincides with the angle of the $H^\infty$-calculus. For details, the reader is referred to \cite[Theorems 10.2.24 and 10.4.21]{analysis_volume_2}.

Another class can be given on $X = L^q$ for $q\in (1, \infty)$: all operators $A$ for which $-A$ generates a positive contraction semigroup on $X$. In this case, one obtains that the angle of the $H^\infty$-calculus is $\leq \pi/2$. Moreover, if $A$ is also sectorial of angle $\omega(A)<\pi/2$, then one obtains that the angle of the $H^\infty$-calculus is $<\pi/2$ as well (although the two angles might differ). Details can be found in \cite[Theorems 10.7.12 and 10.7.13]{analysis_volume_2}.

\subsubsection{Fractional powers}
For a sectorial operator $A$ on a Banach space $X$, the fractional power $A^{z}$ can be defined for any $z\in \C$ via the so-called extended functional calculus (see \cite[Chapter 3]{Haase:2} and \cite[Chapter 15]{analysis_volume_3}). In general, these are again closed, unbounded operators. 

The operator $A$ is said to have {\em bounded imaginary powers (BIP)} if $A^{it}$ extends to a bounded operator on $X$ for any $t\in \R$. In particular, this holds if $A$ has a bounded $H^\infty$-calculus. Indeed, one can apply the calculus to the analytic and bounded function $z\mapsto z^{it}$. An important consequence of BIP is the following identification of the domains of the fractional powers and the complex interpolation spaces.
\begin{lemma}\label{lem:BIP}
Suppose that $A$ is a sectorial operator and that $A$ has BIP. Then for all $\theta\in (0,1)$, \[D(A^{\theta}) = [X, D(A)]_{\theta}\]
with equivalent norms.
\end{lemma}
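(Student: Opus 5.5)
The plan is to follow the standard route through Stein's interpolation theorem for analytic families. Fix $\theta\in(0,1)$; by BIP the map $t\mapsto A^{it}$ is strongly continuous. A standard estimate for sectorial operators with BIP then gives $\|A^{it}\|\le Me^{\omega|t|}$ for some $M\ge1$ and $\omega\ge0$. Since $A$ need not be invertible, I would work with $D(A)$ carrying the graph norm $\|x\|+\|Ax\|$. If convenient, I would first replace $A$ by $1+A$: it is standard that $D((1+A)^\theta)=D(A^\theta)$ with equivalent graph norms, and that $1+A$ again has BIP. So I may assume $A$ is invertible and take the norm $\|A^\theta x\|$ on $D(A^\theta)$.

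For the inclusion $D(A^\theta)\hookrightarrow [X,D(A)]_\theta$, take $x\in D(A)$, which is dense in $D(A^\theta)$. Define
\[
f(z)=e^{(z-\theta)^2}A^{\theta-z}x
\]
on the strip $0\le \operatorname{Re} z\le 1$. Each $A^{\theta-z}x$ lies in $X$, so $f$ is analytic and bounded into $X$, and $f(\theta)=x$. On the line $\operatorname{Re} z=0$ we get $\|f(it)\|\lesssim e^{-t^2}e^{\omega|t|}\|A^\theta x\|$. On the line $\operatorname{Re} z=1$ we have $Af(1+it)=e^{(1+it-\theta)^2}A^{-it}A^{\theta}x$, which is bounded in $D(A)$ by $C\|A^\theta x\|$ because the Gaussian factor beats the exponential growth of $A^{-it}$. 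This gives $\|x\|_{[X,D(A)]_\theta}\lesssim \|A^\theta x\|$, and density extends the bound to all of $D(A^\theta)$.

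For the reverse inclusion, take $x\in[X,D(A)]_\theta$ with an admissible $f$ satisfying $f(\theta)=x$. By density one may assume $f$ takes values in $D(A)$; finite sums of the form $e^{\delta z^2}\sum e^{\lambda_k z}x_k$ with $x_k\in D(A)$ are dense in the Calderón space. Put
\[
g(z)=e^{(z-\theta)^2}A^{z}f(z).
\]
On $\operatorname{Re} z=0$ this is bounded by $e^{\omega|t|-t^2}\|f(it)\|$, and on $\operatorname{Re} z=1$ it is bounded by $e^{\omega|t|-t^2}\|Af(1+it)\|$. The three lines lemma then gives $\|A^\theta x\|=\|g(\theta)\|\lesssim\|f\|_{\mathcal F}$. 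Taking the infimum over $f$ and closing the operator $A^\theta$ finishes the argument.

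I expect the main obstacle to be the rigor of the analyticity and boundedness of $z\mapsto A^{z}y$ on the strip, together with the density step in the second direction. Analyticity for $y\in D(A)$ is standard from the extended functional calculus, using $A^{z}=A^{z-1}A$ with $A^{z-1}$ bounded for $\operatorname{Re}(z-1)<0$ in the invertible case. Alternatively, this is exactly \cite[Theorem 15.3.9]{analysis_volume_3} (or Triebel's theorem), and one may simply cite it.
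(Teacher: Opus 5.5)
Your proposal is correct, and it is the standard Stein-type argument: apply the Gaussian damping $e^{(z-\theta)^2}$ to $A^{\theta-z}x$ in one direction and to $A^{z}f(z)$ in the other. This is the proof behind the results the paper simply cites (\cite[Theorem 6.6.9]{Haase:2}, \cite[Theorem 15.3.9]{analysis_volume_3}, \cite[1.15.3]{Tri95}); the paper gives no argument of its own. The facts you import are all standard, so the reduction to invertible $A$ and both inclusions go through: that BIP yields a $C_0$-group with $\|A^{it}\|\le Me^{\omega|t|}$, that $1+A$ inherits BIP (e.g.\ via $(1+A)^{it}=F(A)+A^{it}F(A^{-1})$ with $F(z)=(1+z)^{it-1}$), and that the Gaussian-exponential sums are dense in the Calder\'on space.
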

A proof can be found in \cite[Theorem 6.6.9]{Haase:2}, \cite[Theorem 15.3.9]{analysis_volume_3}, and \cite[1.15.3]{Tri95}.

\subsection{Properties of the operator $A$} \label{section:The operator Aq}  Let $\O$ be a bounded, convex domain in $\R^3$ of class $C^{2}$. 
Let $a_{ij}, b_i,c$ be bounded, measurable, real-valued coefficients, and assume that the matrix $a=(a_{ij})_{i,j=1}^3$ is symmetric and elliptic, namely, $a_{ij}=a_{ji}$ and there is $\eta>0$ such that 
\begin{equation*}
	 \eta |\xi|^2 \le 	\sum_{i,j=1}^3 a_{ij}(x) \xi_i \cl{ \xi_j} \le \eta^{-1} |\xi|^2, \quad \text{a.a. } x \in \O,\, \xi \in \C^3.
\end{equation*}
Let $A = -\nabla \cdot a\nabla + b \cdot \nabla +c$ denote the operator associated with the form 
\begin{equation}\label{def:definition of form}
      \mathfrak a (u,v) \coloneq (a\nabla u, \nabla v)_{L^2} + (b\cdot \nabla u, v)_{L^2}+ (c u,v)_{L^2} , \quad u,v \in H^1_0,
 \end{equation} 
 where $b=(b_i)_{i=1}^3$. Let $A^\#= - \nabla \cdot a \nabla$ denote the principal part of $A$. 
It is well known that the semigroup $(\exp{-tA^\#})_{t\ge0}$ generated by $-A^\#$ on $L^2$ is given by a kernel $K$ that satisfies the kernel estimate 
\begin{equation}\label{ineq:gaussian upper estimate}
    K(t,x,y) \le C t^{-3/2}\exp{-\delta|x-y|^2/t}, \quad t>0, \text{ a.a. } x,y\in \O,
\end{equation}
for some constants $\delta,C>0$; see \cite[Theorem 6.10]{Ouhabaz} for instance. 
By \cite[Proposition 10.2.23]{analysis_volume_2}, $A^\#$ is a sectorial and invertible operator on $ L^2$, admitting a bounded $H^\infty$-calculus of angle zero. Therefore, \cite[Theorem 3.4]{calculus_via_kernel_estimates} and the kernel estimate \eqref{ineq:gaussian upper estimate} imply that, for $q\in (1,\infty)$, the $L^q$-realization $A_q^\#$ of $A^\#$ is a sectorial and invertible operator on $X^q_0\coloneq L^q$ and 
\begin{equation}\label{principal part has calculus}
    A_q^\#  \text{ admits a bounded $H^\infty$-calculus of angle zero on $X^q_0$.}
\end{equation}
Let $A_q$ denote the $L^q$-realization of $A$.
By a standard perturbation argument (cf.\,\cite[Theorem 16.2.7]{analysis_volume_3}), for every $q \in (1,\infty)$ and $\nu \in (0,\pi/2)$ there is $\lambda_0  \ge 0$ large enough such that $\lambda_0+A_q$ is invertible and
\begin{equation}\label{lower-order shift has calculus}
	\lambda_0+A_q  \text{ has a bounded $H^\infty(\Sigma_\nu)$-calculus on $X^q_0$.}    
\end{equation}
We note that 
\begin{equation}\label{eq:domains equal with lower-order terms}
	D((\lambda_0+A_q)^\alpha )  =  D((A_q^\#)^\alpha ) \, \text{ with equivalent norms for every } \alpha \in (0,1]. 
\end{equation}
Indeed, the case $\alpha=1$ follows from \cite[Theorem 16.2.3]{analysis_volume_3} and the case $\alpha \in (0,1)$ by complex interpolation and Lemma \ref{lem:BIP}.
For $\alpha \in (0,1]$ we define the fractional spaces
$$X^{q,\#}_\alpha = D((A_q^\#)^\alpha) \quad \text{and} \quad   X^q_\alpha \coloneq  D((\lambda_0+A_q)^\alpha)$$
endowed with their respective homogeneous graph norms, which are equivalent to the inhomogeneous ones since $A_q^\#$ and $\lambda_0+A_q$ are invertible. Note that by \eqref{eq:domains equal with lower-order terms},
\begin{equation}\label{eq:fraction space equals both domains}
	X^q_\alpha = X^{q,\#}_\alpha\quad \text{with equivalent norms for every } \alpha \in (0,1].
\end{equation}
By Lemma \ref{lem:BIP}, for each $\alpha \in (0,1)$, the fractional space $X^{q,\#}_\alpha$ coincides with the complex interpolation $[X^{q,\#}_0,X^{q,\#}_1]_\alpha$, with equivalent norms. 
By the sectoriality and invertibility of $A_q^\#$, for every $\nu \in (0,\pi/2)$, there is $C>0$ such that 
\begin{equation}\label{sectoriality of Aq}
\begin{aligned}
     \| R(\lambda,A_q^\#)\|_{\L(X^q_0,X^{q,\#}_1)}&\le C , \quad \lambda \in \C \setminus \Sigma_\nu
     \\
    \| R(\lambda, A_q^\#)\|_{\L(X^q_0)} & \le \frac{C}{1+|\lambda|}, \quad \lambda \in \C \setminus \Sigma_\nu.
\end{aligned}
\end{equation}
Therefore, \eqref{sectoriality of Aq} and complex interpolation imply
\begin{equation}\label{sectoriality of Aq interpolated}
    \| R(\lambda,A_q^\#)\|_{\L(X^{q,\#}_\beta,X^{q,\#}_{\gamma})} \le C (1+|\lambda|)^{-1+\gamma-\beta} , \quad \lambda \in \mathbb C \setminus \Sigma_\nu, \,  0 \le \beta\le \gamma \le 1 .
\end{equation}
Suppose now that Assumption \ref{main assumptions} holds for the principal part $A^\#$ of $A$. Thus, $X^{q,\#}_1 =W^{2,q}\cap W^{1,q}_0$ and the norm equivalence \eqref{ineq:W^2q regularity} is satisfied, and the Ritz projection $R_h^\#$ associated with $A^\#$ is stable on $W^{1,q}$. It then follows from \eqref{eq:fraction space equals both domains} and by complex interpolation (cf.\,\cite[Theorem 4.1]{Seeley_72}) that 
\begin{equation}\label{X12 characterization}
	X^q_{1/2} = X^{q,\#}_{1/2} = W^{1,q}_0 \quad \text{with equivalent norms}.
\end{equation}

\subsection{Finite elements}\label{section: FEM}
 Let $\O$ be a bounded, convex domain in $\R^3$ of class $C^2$. Let $\mathcal K_h$ be a conforming and quasi-uniform triangulation of $\O$ consisting of three-dimensional simplices, where $h$ denotes the maximum diameter of the elements in $\mathcal K_h$; see \cite[Chapter 3.1]{vexler} for the definitions. Let $\O_h$ denote the polyhedral domain induced by $\mathcal K_h$. We assume that every vertex on $\partial \O_h$ also lies on $\partial \O$. We define the finite element space
$$ S_h \coloneq \{ u_h \in C(\cl \O)  \colon u_h =0 \text{ on } \cl \O \setminus \O_h \text{ and $u_h$ is linear on $K$ for each } K \in \mathcal K_h \} .$$
It is standard that, for $1\le q\le \infty$, the following local inverse estimate holds 
\begin{equation}\label{ineq:local inverse estimate}
    		\|\nabla u_h\|_{L^q(K)} \le C h^{-1} \|u_h\|_{L^q(K)}  , \quad u_h \in S_h, \,  K \in \mathcal K_h,
    \end{equation} 
see, e.g., \cite[Lemma 12.1]{ea1},  \cite[Assumption A.2]{Schatz-Lars} or \cite[Theorem 3.2.6]{Ciarlet}. Since $u_h=0$ on $\cl \O \setminus \O_h$, the local inverse estimate \eqref{ineq:local inverse estimate} implies the global inverse estimate
    	\begin{equation}\label{ineq:inverse estimate for W spaces}
    		\|\nabla u_h\|_{L^q} \le C h^{-1} \|u_h\|_{L^q} , \quad u_h \in S_h.
    	\end{equation}
    	Therefore, by the characterization $X^q_{1/2}=W^{1,q}_0$ given in \eqref{X12 characterization} and complex interpolation, for every  $q \in (1,\infty)$ and  $0\le \alpha\le 1/2$, 
    	\begin{equation}\label{ineq:inverse estimate for Xq spaces}
    		\|u_h\|_{ X^q_\alpha}\le C h^{-2\alpha} \|u_h\|_{ X^q_0}, \quad u_h \in S_h.
    	\end{equation}
Let $P_h$ denote the $L^2$-orthogonal projection onto $S_h$ given by 
\begin{equation*}
    (u-P_hu,v_h)_{L^2} = 0 , \quad u \in L^2, v_h \in S_h.
\end{equation*}
We recall some standard properties of $P_h$. 
\begin{lemma} Let $q \in (1,\infty)$. Then, there is 
 a constant $C>0$, independent of $h$, such that the projection $P_h$ satisfies the stability bound
 \begin{equation}\label{stability of Ph on X0}
\|P_h\|_{\L(X^q_0)}   \le C
\end{equation}
and the convergence estimate
\begin{equation}\label{Convergence estimate for I-Ph}
            \|P_hu-u\|_{X^q_0}+ h\|\nabla(P_hu-u)\|_{L^q(\O_h)} \le C h^2\|u\|_{X^q_1},  \quad  u \in X^q_1.
\end{equation}
\end{lemma}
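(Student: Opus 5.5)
We prove the two estimates separately. Each rests on standard finite element tools: the quasi-uniformity of the mesh $\mathcal K_h$, the local inverse estimate \eqref{ineq:local inverse estimate}, and the elliptic regularity $X^q_1=W^{2,q}\cap W^{1,q}_0$ from Assumption \ref{main assumptions}.

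\textbf{Stability \eqref{stability of Ph on X0}.} The case $q=2$ is trivial, since $P_h$ is an orthogonal projection. For general $q$ we use the exponential decay of the $L^2$-projection on quasi-uniform meshes (Douglas--Dupont--Wahlbin, Crouzeix--Thom\'ee). In the nodal basis $\{\varphi_j\}$, write $P_h u=\sum_j c_j\varphi_j$. The coefficients solve $Mc=b$, where $M$ is the mass matrix and $b_j=(u,\varphi_j)_{L^2}$. Quasi-uniformity gives $M\eqsim h^3 D$ with $D$ diagonally dominant, uniformly banded and well conditioned. A Demko-type argument then shows that the entries of $M^{-1}$ satisfy
\begin{equation*}
|(M^{-1})_{ij}|\le Ch^{-3}\gamma^{|x_i-x_j|/h}, \quad \gamma\in(0,1).
\end{equation*}
Combining this with $|b_j|\le \|u\|_{L^q(\operatorname{supp}\varphi_j)}h^{3/q'}$ and $\|\varphi_j\|_{L^q}\eqsim h^{3/q}$, a discrete Young/Schur inequality (exponentially decaying kernel summed over neighbouring elements) yields $\|P_hu\|_{L^q}\le C\|u\|_{L^q}$ with $C$ independent of $h$. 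Alternatively, one may take the $L^\infty$ and $L^1$ stability from the literature and conclude by interpolation; $L^1$ follows from $L^\infty$ by duality, since $P_h$ is self-adjoint.

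\textbf{Convergence \eqref{Convergence estimate for I-Ph}.} Let $I_h$ be the nodal (Lagrange) interpolant; this is well defined because $W^{2,q}\subset C(\cl\O)$ in dimension $3$ when $q>3/2$. For smaller $q$ we use instead a Scott--Zhang type quasi-interpolant preserving the boundary condition. Since $P_h$ reproduces $S_h$,
\begin{equation*}
P_hu-u=P_h(u-I_hu)-(u-I_hu),
\end{equation*}
and \eqref{stability of Ph on X0} gives $\|P_hu-u\|_{L^q}\le C\|u-I_hu\|_{L^q}$. The standard local interpolation estimates give
\begin{equation*}
\|u-I_hu\|_{L^q(K)}+h\|\nabla(u-I_hu)\|_{L^q(K)}\le Ch^2\|u\|_{W^{2,q}(K)}
\end{equation*}
on interior elements. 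For the gradient term on $\O_h$ we write $\nabla(P_hu-u)=\nabla P_h(u-I_hu)-\nabla(u-I_hu)$. The inverse estimate \eqref{ineq:inverse estimate for W spaces} then gives
\begin{equation*}
\|\nabla P_h(u-I_hu)\|_{L^q}\le Ch^{-1}\|P_h(u-I_hu)\|_{L^q}\le Ch^{-1}\|u-I_hu\|_{L^q}\le Ch\|u\|_{W^{2,q}}.
\end{equation*}
Finally, \eqref{ineq:W^2q regularity} converts $\|u\|_{W^{2,q}}$ into $\|u\|_{X^q_1}$.

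\textbf{Main obstacle.} The delicate points are the boundary mismatch $\O\ne\O_h$ and the $h$-independent $L^q$-stability. On $\O\setminus\O_h$ the discrete functions vanish, so there we must estimate $\|u\|_{L^q(\O\setminus\O_h)}$ directly. Since $\O$ is $C^2$ and the boundary vertices of $\O_h$ lie on $\partial\O$, this strip has width $O(h^2)$. Because $u=0$ on $\partial\O$, a one-dimensional Poincar\'e/Hardy argument across the strip gives
\begin{equation*}
\|u\|_{L^q(\O\setminus\O_h)}\le Ch^2\|\nabla u\|_{L^q(\text{strip})}\le Ch^2\|u\|_{W^{2,q}},
\end{equation*}
using a trace or Sobolev bound for $\nabla u$ near the boundary. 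The interpolant on boundary elements is handled through the $C^2$ extension of $u$. This is also why the gradient estimate in \eqref{Convergence estimate for I-Ph} is stated only on $\O_h$.
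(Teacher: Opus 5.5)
Your proposal is correct and follows essentially the same route as the paper. The paper also quotes the $L^q$-stability of $P_h$ from the literature (Douglas et al.), and proves the convergence estimate via $P_hu-u=P_h(u-\tilde I_hu)-(u-\tilde I_hu)$, the stability of $P_h$ and the inverse estimate, using a Zhang-type interpolant $\tilde I_h$ from Geissert whose error bound on $\O$ already covers the strip $\O\setminus\O_h$.

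Two small remarks. The P1 mass matrix is not diagonally dominant, but the Demko argument only needs sparsity and spectral equivalence to $h^3$ times the identity. And since $\O$ is convex, every simplex lies in $\cl\O$, so no extension of $u$ is needed on boundary elements.
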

\begin{proof}
The stability bound \eqref{stability of Ph on X0} is well known and can be found in \cite{Douglas} for instance. The convergence estimate \eqref{Convergence estimate for I-Ph} is also well known, but we provide a proof for the reader's convenience. There is a Zhang-type interpolant $\tilde I_h$ such that 
\begin{equation}\label{ineq:Ih estimate}
    	\|\tilde I_h u - u\|_{X^q_0} + h \| \nabla (\tilde I_hu -u)\|_{L^q(\O_h)}   \le C h^2 \|u\|_{X^q_1},\quad u \in X^q_1,
\end{equation}
see \cite[Lemmas 2.3 and 2.1]{Geissert-applications-of-DMR}. 
By the stability of $P_h$ on $X^q_0$ given by \eqref{stability of Ph on X0}, the identity $P_h \tilde I_h u = \tilde I_hu$ and \eqref{ineq:Ih estimate}, we get that for $u \in X^q_1$,
	\begin{equation*}
		\|P_h u -u\|_{X^q_0} \le \|P_h (u -\tilde I_h u)\|_{X^q_0} + \|\tilde I_h u -u\|_{X^q_0} \le C  \|\tilde I_h u -u\|_{X^q_0}  \le C h^2 \|u\|_{X^q_1},
	\end{equation*}
    which, together with the inverse estimate \eqref{ineq:inverse estimate for W spaces}, implies
    \begin{align*}
		\|\nabla(P_h u -u)\|_{L^q(\O_h)} &\le 	\|\nabla(P_h u -\tilde I_hu)\|_{L^q(\O_h)} + \|\nabla(\tilde I_h u -u)\|_{L^q(\O_h)}
		\\
		& \le C h^{-1} \|P_h u -\tilde I_h u\|_{X^q_0} +  \|\nabla(\tilde I_h u -u)\|_{L^q(\O_h)}
		\\
		&\le C h \|u\|_{X^q_1},
	\end{align*}
which finishes the proof.
\end{proof}
We define the discrete space
$$X^q_{0,h} $$
to be  $S_h$  endowed with the $L^q$-norm.

\subsection{Stochastic integration} 
In principle, there is a full analogue of stochastic integration theory in infinite dimensions. However, geometric conditions on the underlying spaces are required. In order to give a satisfactory explanation of stochastic integration in a Banach space setting, we need $\gamma$-radonifying operators $\gamma(H,X)$ where $H$ is a Hilbert space and $X$ is a Banach space. For details we refer to \cite[Chapter 9]{analysis_volume_2}. Specializing to Hilbert spaces $X$, this class of operators reduces to the Hilbert--Schmidt operators $\L_2(H,X)$. Moreover, in the important case $X = L^q(\mathcal{O})$, one can identify $\gamma(H,X)$ with $L^q(\mathcal{O};H)$.

Let $(\Omega,\F, \P)$ denote a probability space with filtration $\mathbb{F}=(\F_t)_{t\geq 0}$.
\begin{definition}
\label{def:Cylindrical_BM}
Let $H$ be a Hilbert space. A bounded linear operator $W\colon L^2(\R_+;H)\rightarrow L^2(\Omega)$ is said to be a {\em cylindrical Brownian motion} in $H$ if the following are satisfied:
\begin{itemize}
\item for all $f\in L^2(\R_+;H)$ the random variable $W(f)$ is centered Gaussian;
\item for all $t\in \R_+$ and $f\in L^2(\R_+;H)$ with support in $[0,t]$, $W(f)$ is $\F_t$-measurable;
\item for all $t\in \R_+$ and $f\in L^2(\R_+;H)$ with support in $[t,\infty)$, $W(f)$ is independent of $\F_t$;
\item for all $f_1,f_2\in L^2(\R_+;H)$ we have $\E(W(f_1)W(f_2))=(f_1,f_2)_{L^2(\R_+;H)}$.
\end{itemize}
\end{definition}
Given $W$, the process $t\mapsto W(\1_{(0,t]} h)$ is a Brownian motion for each $h\in H$.

The way to think about $W$ is that it is given by $t\mapsto \sum_{n\geq 1} W^n(t) h_n$, where
$(W^n)_{n\geq 1}$ are independent standard $\mathbb{F}$-Brownian motions, and $(h_n)_{n\geq 1}$ is an orthonormal basis for $H$. However, since the above series does not define an $H$-valued random variable, the definition is given in a weaker sense.

However, the following convergence property does hold: if $S\in \gamma(H,X)$, then
\begin{equation}\label{eq:convSWH}
S W(t) \coloneq \sum_{k\geq 1} W(\1_{(0,t]} h_k) S h_k,
\end{equation} 
 where the convergence takes place in $L^p(\Omega;X)$ for all $p\in [1, \infty)$.

A process $g \colon \R_+\times\Omega \to \L(H,X)$ is called {\em $H$-strongly progressively measurable} if for all $t\in [0,T]$, $g|_{[0,t]}$ is strongly $\mathcal B([0,t])\otimes \F_t$-measurable (where $\mathcal B$ denotes the Borel $\sigma$-algebra).

For $0\leq a<b\leq T$ and a strongly $\F_a$-measurable $\xi\colon \Omega\to \gamma(H,X)$, the stochastic integral of $\1_{(a,b]} \xi$ is defined by
\begin{equation}
\int_0^{t} \1_{(a,b]} \xi\, d W\coloneq  \xi (W(b\wedge t)-W(a\wedge t)),
\end{equation}
where the series can be shown to be convergent as in \eqref{eq:convSWH} by the independence of $\F_a$ and $W(b\wedge t)-W(a\wedge t)$.

The space $L^p_{\mathbb{F}}(\Omega\times(0,T);\gamma(H,X))$ denotes the subspace of $L^p((0,T)\times\Omega;\gamma(H,X))$ consisting of all strongly progressively measurable processes. It can be shown that this coincides with the closure of the adapted step processes of finite rank (see \cite[Proposition 2.10]{NVW1}).

\subsection{(Discrete) stochastic maximal regularity} \label{section:SMR}
For a detailed treatment of stochastic maximal regularity ($\SMR$) and its applications to stochastic partial differential equations (SPDEs) the reader is referred to \cite{AVstab, agresti2025nonlinear,NVWSMR} and the references therein. Let $X_0$ be a Banach space that is isomorphic to a closed subspace of $L^q(\mathcal S)$ for some $q \in [2,\infty)$ and some $\sigma$-finite measure space $\mathcal S$. Suppose that $X_1$ is another Banach space such that $X_1 \hookrightarrow X_0$ densely and let $A$ be a sectorial operator on $X_0$ of angle $\omega(A)<\pi/2$ and domain $D(A)=X_1$. We assume that $A$ is invertible.

\begin{definition}\label{def:SMR}
    Let $T\in(0,\infty]$ and $p\in[2,\infty)$. The operator $A$ is said to have \textit{stochastic maximal $L^p$-regularity} on $(0,T)$ if there is a constant $C>0$ such that, for every $g \in L^p_{\mathbb{F}}(\Omega\times(0,T);\gamma(H,X_0))$, the mild solution $u$ of 
\begin{equation}\label{eq:Stochastic Cauchy Problem}
    \begin{cases}
 du (t) + Au(t) \, dt =  g(t) \,dW(t), \quad t \in (0,T),
 \\
 u(0) =0,
 \end{cases}
\end{equation}
given by 
\begin{align}\label{eq:mildsol}
u(t) \coloneq \int_0^t e^{-(t-s)A} g(s) \, d W(s), \quad t\in(0,T),
\end{align}
belongs to $D(A^{1/2})$ a.s.\ and satisfies
	\begin{equation} \label{Ineq: SMR definition}
		\|A^{1/2}u \|_{L^p(\Omega \times (0,T);X_0)}  \le C \|g\|_{L^p(\Omega\times(0,T);\gamma(H,X_0))}.
	\end{equation}
	The least admissible constant $C$ is denoted by $C_{\SMR(p,T)}^A$. In case the above holds, we will write $A\in \SMR(p,T)$.
\end{definition}
Some basic properties of stochastic maximal $L^p$-regularity are collected in the following:
\begin{proposition}\label{prop:continuoustime}
Suppose that $A$ has stochastic  maximal $L^p$-regularity on $(0,T)$ with respect to a cylindrical Brownian motion on $H$ with $\dim (H)\geq 1$. Then the following hold:
\begin{enumerate}[(1)]
\item\label{it1:continuoustime} If $T<\infty$ and $\lambda\in \C$, then $A+\lambda\in \SMR(p,T)$;
\item\label{it2:continuoustime} If $T=\infty$ and $\Re(\lambda)\geq 0$, then $A+\lambda\in \SMR(p,\infty)$;
\item\label{it3:continuoustime} If $T<\infty$ and $\lim_{t\to \infty}\|e^{-tA}\|_{\L(X_0)} = 0$, then  $A\in \SMR(p,\infty)$;
\item\label{it4:continuoustime} If $\wt{T}\in (0,\infty)$, then $A\in \SMR(p,\wt{T})$;
\item\label{it5:continuoustime} If $q\in (2, \infty)$, then $A\in \SMR(q,T)$;
\item\label{it6:continuoustime} If $\wt{H}$ is another Hilbert space, then $A\in \SMR(p,T)$ with respect to any cylindrical Brownian motion on $\wt{H}$.
\end{enumerate}
\end{proposition}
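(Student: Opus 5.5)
Most items can be proved directly from the mild solution formula \eqref{eq:mildsol}, the Itô isomorphism in $X_0\subset L^q$ (where $\gamma(H,X_0)\simeq X_0(H)$), and a few elementary semigroup bounds. Item \eqref{it5:continuoustime} is different: it needs a genuine extrapolation argument, and I expect it to be the main obstacle. Throughout, $u$ is the mild solution for $A$ and $u_\lambda$ the one for $A+\lambda$.

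\emph{Items \eqref{it1:continuoustime} and \eqref{it2:continuoustime}.} First I would handle purely imaginary shifts $\lambda=i\mu$ by modulation. Since $e^{-i\mu(t-s)}=e^{-i\mu t}e^{i\mu s}$,
\[
u_{i\mu}(t)=e^{-i\mu t}\int_0^t e^{-(t-s)A}\bigl(e^{i\mu s}g(s)\bigr)\,dW(s).
\]
The process $e^{i\mu s}g$ is still progressively measurable with the same norm, so the estimate \eqref{Ineq: SMR definition} transfers with the same constant, for any $T$. For a general shift, the difference $v=u_\lambda-u$ solves $v'+(A+\lambda)v=-\lambda u$ pathwise with $v(0)=0$, so that
\[
A^{1/2}v(t)=-\lambda\int_0^t e^{-(t-s)(A+\lambda)}A^{1/2}u(s)\,ds.
\]
If $T<\infty$, Young's inequality bounds the convolution kernel in $L^1(0,T)$ by $C|\lambda|Te^{|\lambda|T}$, which gives \eqref{it1:continuoustime}. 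If $T=\infty$ and $\lambda=a+i\mu$ with $a>0$, I first reduce to $\mu=0$ by the modulation step. The kernel $a\,e^{-r(A+a)}$ then has $L^1(0,\infty;\L(X_0))$-norm at most $\sup_r\|e^{-rA}\|\le C$, which gives \eqref{it2:continuoustime}.

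\emph{Items \eqref{it3:continuoustime} and \eqref{it4:continuoustime}.} Uniform decay of $e^{-tA}$ implies exponential decay, $\|e^{-tA}\|\le Ce^{-\omega t}$. Restricting to a smaller time interval is trivial, so the only case to treat in \eqref{it4:continuoustime} is $\wt T>T$. For that, and for \eqref{it3:continuoustime}, I would cut $g$ into pieces $g_k=g\1_{[kT,(k+1)T)}$ with mild solutions $u_k$; each $u_k$ vanishes before $kT$. On the window $[kT,(k+1)T]$, SMR on $(0,T)$ after a time shift bounds $A^{1/2}u_k$ by $\|g_k\|$. After time $(k+1)T$ we have $u_k(t)=e^{-(t-(k+1)T)A}u_k((k+1)T)$. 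There I would use the bound
\[
\|A^{1/2}e^{-rA}\|\le Cr^{-1/2}e^{-\omega r}
\]
together with $\|u_k((k+1)T)\|_{L^p(\Omega;X_0)}\lesssim\|g_k\|$, which follows from the Itô isomorphism and Burkholder's inequality. The $r^{-1/2}$ singularity at $r=0$ is not integrable in $L^p$ for $p\ge 2$. To avoid it, I would instead estimate this tail part starting from time $(k+2)T$, and cover the interval $[(k+1)T,(k+2)T]$ by SMR on a window of length $2T$, obtained by applying the same argument once with finite horizon. Summing over $k$ with the discrete Young inequality, using the summable weights $e^{-\omega(j-k)T}$, gives $A\in\SMR(p,\infty)$, and restricting to $(0,\wt T)$ gives \eqref{it4:continuoustime}. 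For \eqref{it4:continuoustime} with finite $\wt T$, no decay is needed, because only finitely many blocks occur.

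\emph{Item \eqref{it6:continuoustime}.} Since $X_0$ is a closed subspace of $L^q$, we have $\gamma(H,X_0)\simeq X_0(H)$. A cylindrical Brownian motion on $\wt H$ can be realized through an orthonormal basis, so the problem reduces to passing between one-dimensional noise and $\ell^2$-valued noise. Passing from $\ell^2$ to one-dimensional noise is just restriction. For the other direction, the solution operator $g\mapsto A^{1/2}u$ acts componentwise on each scalar Brownian motion. I would then extend this operator to $\ell^2$-valued integrands by a Marcinkiewicz--Zygmund/Kahane-type randomization argument, which is available in $L^q$-spaces.

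\emph{Item \eqref{it5:continuoustime} (main obstacle).} For $p$-independence I would view $g\mapsto A^{1/2}u$ as a stochastic singular integral operator with operator kernel $K(t,s)=\1_{s<t}A^{1/2}e^{-(t-s)A}$. This kernel satisfies Calderón--Zygmund-type smoothness bounds in $t$, namely $\|\partial_tK(t,s)\|\lesssim|t-s|^{-3/2}$. I would then invoke an extrapolation theorem for stochastic singular integrals (of the type used by van Neerven--Veraar--Weis and Agresti--Veraar): boundedness for one $p$ implies boundedness for all larger exponents. The delicate point is verifying the stochastic Hörmander condition, which involves square functions in $X_0$ rather than plain norms, and this is where the $L^q$ structure of $X_0$ is essential.
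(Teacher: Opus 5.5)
Your arguments for items (1) and (2) are correct: modulation handles imaginary shifts, and the identity $A^{1/2}(u_\lambda-u)=-\lambda\int_0^t e^{-(t-s)(A+\lambda)}A^{1/2}u(s)\,ds$ (via stochastic Fubini) plus Young's inequality does the rest. Your route to (6), through the It\^o isomorphism in $L^q$ and Gaussian randomization, is also workable. The randomization has to be applied to the deterministic square-function operator $g\mapsto\bigl(\int_0^t|A^{1/2}e^{-(t-s)A}g(s)|^2\,ds\bigr)^{1/2}$, not to the stochastic solution map, because the summands $\int_0^t e^{-(t-s)A}g_n\,dW_n$ are not independent. The paper proves none of this directly: it cites \cite{AVstab} for (1)--(4) and (6), and \cite{LoVer} for (5). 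Two steps of your proposal do not work as written.

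\textbf{Items (3) and (4): the singularity fix is circular.} You need SMR on windows of length $2T$ and say it follows by running the same argument once on a finite horizon. That argument, with blocks $g\1_{[0,T)}$ and $g\1_{[T,2T)}$, must bound $A^{1/2}e^{-(t-T)A}u_0(T)$ in $L^p(\Omega\times(T,2T);X_0)$, which is exactly the singular term you were avoiding. SMR on $(0,T)$ only gives $u_0(T)\in L^p(\Omega;X_0)$, not a trace bound in $(X_0,X_1)_{1/2-1/p,p}$.

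The fix is to take blocks strictly shorter than the window on which SMR is known, e.g.\ $g_k=g\1_{[kT/2,(k+1)T/2)}$. Then:
\begin{itemize}
\item Shifted SMR on $(0,T)$ controls $A^{1/2}u_k$ on $[kT/2,kT/2+T]$, i.e.\ a buffer of length $T/2$ past the block.
\item After that, $u_k(t)=e^{-(t-(k+1)T/2)A}u_k((k+1)T/2)$ is only used at distances $r\ge T/2$. There $\|A^{1/2}e^{-rA}\|\le C_Te^{-\omega r}$, or is merely bounded on a finite horizon for (4).
\item Every $t$ lies in at most two windows, so the window contributions sum to $C\|g\|$, and the tails close by the discrete Young inequality as you intended.
\end{itemize}

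\textbf{Item (5): two pieces are missing.}
\begin{itemize}
\item The statement requires every $q\in(2,\infty)$, so also $q\in(2,p)$ when $p>2$. You only claim extrapolation to larger exponents. You need the full two-sided $p$-independence theorem for singular stochastic integral operators, \cite[Theorem 8.2]{LoVer}.
\item That theorem is applied on the half-line, so for $T<\infty$ you also need the reduction the paper performs. Choose $\lambda\ge 0$ with $\|e^{-t(\lambda+A)}\|\to 0$. Items (1) and (3) give $\lambda+A\in\SMR(p,\infty)$. Extrapolate, then return to $A\in\SMR(q,T)$ via items (4) and (1).
\end{itemize}
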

\begin{proof}
The permanence properties \eqref{it1:continuoustime} and \eqref{it2:continuoustime} follow from \cite[Proposition 3.8]{AVstab}, \eqref{it3:continuoustime} follows from \cite[Theorem 5.2]{AVstab}, \eqref{it4:continuoustime} follows from \cite[Corollary 5.3]{AVstab} and \eqref{it6:continuoustime} follows from \cite[Theorem 3.9]{AVstab}. The extrapolation property 
\eqref{it5:continuoustime} for $T = \infty$ follows from \cite[Theorem 8.2]{LoVer}. If $T<\infty$, then we can use a simple shift argument. Let $\lambda\geq 0$ be such that $\lim_{t\to \infty}\|e^{-t(\lambda+A)}\|_{\L(X_0)}=0$. By \eqref{it1:continuoustime}, $\lambda+A\in \SMR(p,T)$, and thus $\lambda+A\in \SMR(p,\infty)$ by \eqref{it3:continuoustime}. Hence $\lambda+A\in \SMR(q,\infty)$. By \eqref{it4:continuoustime}, $\lambda+A \in \SMR(q,T)$ and thus by  \eqref{it1:continuoustime} this implies $A\in \SMR(q,T)$.
\end{proof}
A sufficient condition for $\SMR$ is given below.

\begin{theorem}\label{thm:SMR in Lq spaces}
   Let  $T \in (0,\infty]$ and $p \in (2,\infty)$, where we allow $p=2$ if $q=2$. Suppose that $A$ admits a bounded $H^\infty$-calculus of angle less than $\pi/2$. 
    Then $A \in \SMR(p,T)$. 

    Moreover, there is  $C>0$ such that for every $g \in L^p_{\mathbb{F}}(\Omega\times(0,T);\gamma(H,X_0))$,  the mild solution $u$ of \eqref{eq:Stochastic Cauchy Problem}  satisfies the maximal estimate
     \begin{equation}\label{ineq:max estimate}
          \Big ( \E \sup_{0\le t \le T} \|u(t)\|^p_{(X_0,X_{1})_{1/2-1/p,p}} \Big)^{1/p} \le C \|g\|_{L^p(\Omega \times (0,T); \gamma(H, X_{0}))}  .
     \end{equation}
\end{theorem}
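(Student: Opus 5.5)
This theorem is essentially a known result, so the plan is to assemble it from the literature rather than prove it from scratch. The setting is as follows. $X_0$ is isomorphic to a closed subspace of $L^q(\mathcal S)$ with $q\in[2,\infty)$, and $A$ is invertible and sectorial with a bounded $H^\infty$-calculus of angle $<\pi/2$. The main tools are the Itô/Burkholder-type square function estimates for stochastic convolutions in $L^q$-spaces, combined with the square function characterization provided by the $H^\infty$-calculus. These are the results of van Neerven, Veraar and Weis \cite{NVWSMR}, and for the maximal estimate the refinement in \cite{AVstab,agresti2025nonlinear}.

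\textbf{Step 1: the case $T=\infty$.} Here I would invoke \cite[Theorem 1.1]{NVWSMR}. It states that if $A$ has a bounded $H^\infty$-calculus of angle $<\pi/2$ on a space isomorphic to a closed subspace of $L^q$ with $q\in[2,\infty)$, then for $p\in(2,\infty)$, and also $p=2$ when $q=2$, one has
\[
\|A^{1/2}u\|_{L^p(\Omega\times\R_+;X_0)}\le C\|g\|_{L^p(\Omega\times\R_+;\gamma(H,X_0))}.
\]
The proof there proceeds in three moves. First, a stochastic Fubini argument and the Burkholder--Davis--Gundy inequality in $L^q$ (via $\gamma(L^2,L^q)\simeq L^q(L^2)$) reduce the estimate to a deterministic bound. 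That bound concerns the operator $g\mapsto \big(s\mapsto A^{1/2}e^{-(t-s)A}g(s)\big)$ from $L^p(\R_+;L^q(H))$ into $L^p(\R_+;L^q(L^2(\R_+;H)))$. Second, the bounded $H^\infty$-calculus yields $R$-boundedness, and hence square function estimates $\|(\int_0^\infty |A^{1/2}e^{-tA}x|^2dt)^{1/2}\|_{L^q}\eqsim\|x\|$. Third, a Calderón--Zygmund-type argument completes the deterministic bound, and $p=2=q$ is the Hilbert case.

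\textbf{Step 2: finite $T$.} This follows from Step 1 by extending $g$ by zero on $(T,\infty)$, since $u$ on $(0,T)$ depends only on $g|_{(0,T)}$. Alternatively it follows from Proposition \ref{prop:continuoustime}\eqref{it4:continuoustime}. Invertibility is not even needed for this step.

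\textbf{Step 3: the maximal estimate \eqref{ineq:max estimate}.} I would follow \cite[Theorem 4.5 / Section 3]{AVstab}, which was also used in \cite{agresti2025nonlinear}. Set $\theta=\tfrac12-\tfrac1p$. For $p>2$, write $X_\theta^{\mathrm{tr}}=(X_0,X_1)_{\theta,p}$ and use the factorization (Da Prato--Kwapień--Zabczyk) method:
\[
u(t)=c_\alpha\int_0^t (t-s)^{\alpha-1}e^{-(t-s)A}v(s)\,ds,\qquad v(s)=\int_0^s(s-r)^{-\alpha}e^{-(s-r)A}g(r)\,dW(r).
\]
The plan for this representation has three parts.
\begin{enumerate}[(a)]
\item Bound $v$ in $L^p(\Omega\times(0,T);X_{1/2-\alpha})$ via the weighted version of Step 1. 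This is the $H^\infty$ square-function argument with an additional power weight.
\item Show that the deterministic operator $v\mapsto u$ maps $L^p(0,T;X_{1/2-\alpha})$ into $C([0,T];(X_0,X_1)_{\theta,p})$. Here one uses the real interpolation characterization
\[
\|x\|_{(X_0,X_1)_{\theta,p}}\eqsim\Big(\int_0^\infty\|t^{1-\theta}Ae^{-tA}x\|^p\,\tfrac{dt}{t}\Big)^{1/p}
\]
together with Hardy's inequality.
\end{enumerate}
When $p=q=2$ the trace space is $X_0$, and the estimate instead follows from the Itô isometry and contractivity (after renorming) in Hilbert space.

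\textbf{Main obstacle.} The delicate point is obtaining the sharp trace space $(X_0,X_1)_{1/2-1/p,p}$ without an $\epsilon$-loss. The naive factorization only lands in $X_{1/2-1/p-\epsilon}$, so I would instead apply the sharp version from \cite{AVstab}. That version exploits the $H^\infty$-calculus to obtain a maximal inequality for stochastic convolutions with values in the real interpolation space. In practice, the proof amounts to citing these results and checking that their hypotheses match ours.
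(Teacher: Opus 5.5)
Your proposal is correct and follows essentially the same route as the paper, whose proof consists only of citations: \cite[Theorem 1.1]{NVWSMR} for $A\in\SMR(p,T)$, and \cite[Theorem 1.2]{NVWSMR} (see also \cite[Theorem 7.16]{AVstab}, rather than the Section 3 result you name) for the sharp maximal estimate. Your reduction to $T=\infty$ by extending $g$ by zero is fine, and in fact your factorization steps (a)--(b) with $\alpha\in(1/p,1/2)$, using the real-interpolation characterization and Hardy's inequality, already give the trace space $(X_0,X_1)_{1/2-1/p,p}$ without $\epsilon$-loss, so the worry in your last paragraph is unfounded.
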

\begin{proof}
    The first part follows from \cite[Theorem 1.1]{NVWSMR} (see also \cite[Theorems 7.1 and 7.3]{NVW_2012_survey}). The maximal estimate \eqref{ineq:max estimate} follows from \cite[Theorem 1.2]{NVWSMR} (see also \cite[Theorem 7.16]{AVstab}). 
\end{proof}
Recently, in \cite{DSMR} Veraar and the author introduced a discrete-in-time version of stochastic maximal regularity and established its equivalence with the continuous-in-time counterpart. We summarize these results below. 

Let $r$ be either the exponential function $r(z) \coloneq e^{-z}$, or an $A$-stable rational function  $r\colon \Sigma_{\pi/2}\to \C$ (i.e.\ $|r(z)|\leq 1$ for $z\in \Sigma_{\pi/2}$) that is consistent of order $\ell \ge 1$ (i.e.\  $|r(z) - \exp{-z}|\leq C|z|^{\ell+1}$ for $z\to 0$) and satisfies $r(\infty) = 0$. Typical examples include, but are not limited to, $r(z) = (1+z)^{-1}$ with $\ell=1$ which corresponds to the implicit Euler scheme, and the sub-diagonal Pad\'e rational functions $r_{n,n+1}=P_n/Q_{n+1}$ and $r_{n,n+2}=P_n/Q_{n+2}$ $(n\ge 1)$, where
\begin{align*}
    P_n(z) \coloneq \sum_{j=0}^{n} \frac{(n+m-j)! \, n!}{(n+m)! \, j! \, (n-j)!} (-z)^j, \qquad
    Q_m(z) \coloneq \sum_{j=0}^{m} \frac{(n+m-j)! \, m!}{(n+m)! \, j! \, (m-j)!} z^j,
\end{align*}
with $\ell=2n+1$ and $\ell= 2n+2$, respectively. See \cite[Section 2.2]{DSMR} for more details.  
Let $T \in (0,\infty]$. We say that $\tau>0$ is {\em admissible} if $T/\tau\in \N$ or $T =\infty$. For an admissible $\tau$ let $\pi_\tau \coloneq \{t_n = n\tau \colon 0 \le n \le N \}$ be a uniform partition of $[0,T]$, where $N = T/\tau$ if $T<\infty$, and $N=\infty$ if $T = \infty$.  For $g \in L^p_{\mathbb{F}}(\Omega \times (0,T);\gamma(H,X_0))$, we define $(U_n)_{n=0}^N$ recursively by
	\begin{equation} \label{Eq:Definition of approximation scheme}
	\begin{cases}
		U_{n+1} \coloneq  r(\tau A) U_{n} + r(\tau A) \int_{t_n}^{t_{n+1}} g(s)\, dW(s), \quad n=0,\dots, N-1,
		\\
		U_0\coloneq 0.
	\end{cases}
	\end{equation}	
Alternatively, we can write $U_n$ as a discrete stochastic convolution
\begin{equation} \label{Eq: Uniform step discrete stochastic convolution}
	U_n = \sum_{j=0}^{n-1}  r(\tau A)^{n-j} \int_{t_j}^{t_{j+1}} g(s)\, dW(s) , \quad n=1,\dots, N.
\end{equation}
\begin{definition} \label{def:DSMR}
Let $T\in(0,\infty]$ and $p\in[2,\infty)$. The scheme $R\coloneq (r(\tau A))_{\tau>0}$ is said to have \textit{discrete stochastic $\ell^p$-maximal regularity} ($\DSMR$) on $(0,T)$ if there is a constant $C>0$, independent of $\tau$, such that for every $g \in L^p_{\mathbb{F}}(\Omega\times(0,T);\gamma(H,X_0))$,
the approximation scheme $(U_n)_{n=0}^N$ given in \eqref{Eq:Definition of approximation scheme} belongs to $D(A^{1/2})$ a.s.\ and satisfies
	\begin{equation} \label{Ineq:DSMR definition}
		 \Big( \E \sum_{n=1}^{N-1} \tau \| A^{1/2} U_n \|^p_{X_0} \Big)^{1/p} \le C \|g\|_{L^p(\Omega \times (0,T); \gamma(H, X_{0}))}.
	\end{equation}
	The least admissible constant $C$ is denoted by $C_{\DSMR(p,T)}^R$. In case the above holds, we  write $R\in \DSMR(p,T)$.
\end{definition}
The constant $C$ is allowed to depend on $T$, but should be uniform in the step size $\tau$.

\begin{remark}
     We note that the spatial regularity of Definition \ref{def:DSMR} is shifted by 1/2 compared to  \cite[Definition 3.1]{DSMR}. This shift is motivated by our intent to apply this framework to the spatial discretization $A_h$ defined in Section \ref{section: FEM}. For this discrete operator, the discrete-to-continuous  norm equivalence \eqref{Xah norm equiv Xa norm intro} holds for $\alpha \in [0,1/2]$, but fails for any $\alpha>1/2$. As a result, the shifted regularity in Definition \ref{def:DSMR} is more appropriate for studying $A_h$. We also note that this shift is justified  by \cite[Remark 3.3]{DSMR}.
\end{remark}

The connection between $\DSMR$ and $\SMR$ is summarized in the following. 

\begin{theorem}\label{thm:SMR iff DSMR}
    Let $T\in (0,\infty]$ and $p\in [2,\infty)$. Then the following are equivalent:
    \begin{enumerate}[(1)]
\item\label{it:mainequiv1} $A$ has stochastic maximal $L^p$-regularity on $(0,T)$;
\item\label{it:mainequiv2} $R$ has discrete stochastic maximal $\ell^p$-regularity on $(0,T)$.
\end{enumerate}
Moreover, $C_{\SMR(p,T)}^A \leq C_{\DSMR(p,T)}^R\leq K (C_{\SMR(p,T)}^A + 1)$,
where the constant $K$ depends on $p$, $X_0$, the function $r$, and on the sectoriality constant and angle of $A$.
\end{theorem}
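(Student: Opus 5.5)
The statement to prove is Theorem~\ref{thm:SMR iff DSMR}, the equivalence of $\SMR$ and $\DSMR$ with the constant bounds. The plan is to compare the discrete convolution \eqref{Eq: Uniform step discrete stochastic convolution} with the mild solution \eqref{eq:mildsol}, using the parallel structure of the two objects, in two directions.

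\emph{Direction (2)$\Rightarrow$(1).} I would first treat the exponential scheme $r(z)=e^{-z}$. In that case $U_n$ is obtained from $u(t_n)$ by freezing the semigroup at grid points, and $A^{1/2}u(t)$ is recovered on each interval through the identity
\[
u(t)=e^{-(t-t_n)A}u(t_n)+\int_{t_n}^t e^{-(t-s)A}g(s)\,dW(s).
\]
For a general $r$, I would instead use a limiting argument. Apply the $\DSMR$ estimate with step $\tau$ to the sequence $A^{1/2}U_n$. For fixed $t$, with $n=\lfloor t/\tau\rfloor$, we have $r(\tau A)^{n}\to e^{-tA}$ strongly as $\tau\to0$; this is the Hille--Phragmén--Kato-type convergence for consistent $A$-stable rational approximations. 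The step functions $\sum_n \1_{[t_n,t_{n+1})}A^{1/2}U_n$ then converge to $A^{1/2}u$ weakly, or in probability after testing against functionals. Fatou's lemma, together with closedness of $A^{1/2}$, then gives $\|A^{1/2}u\|_{L^p}\le C^R_{\DSMR}\|g\|$. This yields $C^A_{\SMR}\le C^R_{\DSMR}$ with no extra constant, as the statement requires. For the convergence I would first take finite-rank adapted step processes $g$ and then pass to general $g$ by density.

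\emph{Direction (1)$\Rightarrow$(2).} This is the main step. I would write
\[
A^{1/2}U_n=A^{1/2}u(t_n)+A^{1/2}\bigl(U_n-u(t_n)\bigr).
\]
The first term, averaged over each interval, is controlled by $\SMR$. One caveat is that $\sum_n\tau\|A^{1/2}u(t_n)\|^p$ is not directly bounded by the $L^p$ norm of $A^{1/2}u$. I would therefore compare $U_n$ instead with $\tilde u(t)$ for $t\in[t_n,t_{n+1})$, using $u(t)=e^{-(t-t_n)A}u(t_n)+\dots$ and $\tau$-averaging.

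The error term is a discrete stochastic convolution whose kernel is built from
\[
A^{1/2}\bigl(r(\tau A)^{n-j}-e^{-(t_n-s)A}\bigr),\qquad s\in(t_j,t_{j+1}).
\]
I would estimate it with the $H^1$-calculus: $r(z)^k-e^{-kz}$ and the local pieces $e^{-z}-e^{-\sigma z}$ are $O(|z|)$ near $0$ and decay away from it. This gives operator bounds of the form
\[
\|A^{1/2}(\cdots)\|\lesssim \tau^{1/2}(n-j)^{-1-\epsilon}\,\tau^{-1/2}\cdots,
\]
that is, summable square-function kernels with constants depending only on the sectoriality data.

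Next I would apply a discrete stochastic Young-type inequality. This uses the Burkholder--Davis--Gundy inequality in $\gamma$-spaces on $L^q$ (the assumption that $X_0$ embeds into $L^q$ with $q\ge2$ is used here), then Minkowski's inequality in $\gamma(\ell^2)$, and finally Young's inequality in $\ell^p$ with an $\ell^1$ kernel. This bounds the error by $K\|g\|$. For the lowest indices I would handle $n-j=0,1$ directly via $\|A^{1/2}r(\tau A)\|\lesssim\tau^{-1/2}$ together with Itô isometry over one step. Combining the two terms gives $C^R_{\DSMR}\le K(C^A_{\SMR}+1)$.

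The obstacle I expect is making the kernel-error estimate genuinely square-summable without assuming an $H^\infty$-calculus. Only sectoriality may be used, so pointwise operator-norm bounds must replace $\gamma$-boundedness. I would try to get the required decay $(n-j)^{-3/2}$ for $A^{1/2}$ times the difference from consistency of order $\ell\ge1$, which gives an extra factor $z$, combined with $A$-stability and $r(\infty)=0$, which give analyticity and $|r(z)|<1$ on a larger sector.
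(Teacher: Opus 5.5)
The paper does not prove this theorem; it quotes it from \cite{DSMR}. There is a genuine gap in your direction (1)$\Rightarrow$(2).

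Your direction (2)$\Rightarrow$(1) is sound: let $\tau\to0$, extract a weak limit in the reflexive space $L^p(\Omega\times(0,T);X_0)$, and use closedness of $A^{1/2}$. This gives $C^A_{\SMR}\le C^R_{\DSMR}$. Use it for $r(z)=e^{-z}$ too, because the identity $u(t)=e^{-(t-t_n)A}u(t_n)+\int_{t_n}^t e^{-(t-s)A}g\,dW$ you propose there leaves exactly the singular term described next.

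\textbf{The gap.} Your error term $A^{1/2}(U_n-u(t_n))$ contains, for $j=n-1$, the continuous piece $\int_{t_{n-1}}^{t_n}A^{1/2}e^{-(t_n-s)A}g(s)\,dW(s)$. If you compare with $u(t)$ for $t\in[t_n,t_{n+1})$, it also contains $\int_{t_{n-1}}^{t}A^{1/2}e^{-(t-s)A}g(s)\,dW(s)$.
\begin{itemize}
\item The kernel $A^{1/2}e^{-(t_n-s)A}$ has norm $\lesssim(t_n-s)^{-1/2}$, and no better in general.
\item So BDG with pointwise operator bounds only yields $\int_{t_{n-1}}^{t_n}(t_n-s)^{-1}\|g(s)\|^2_{\gamma(H,X_0)}\,ds$, which is infinite in general.
\item In fact $u(t_n)$ need not lie in $D(A^{1/2})$ at a fixed time, even for $X_0=L^2$.
\item The bound $\|A^{1/2}r(\tau A)\|\lesssim\tau^{-1/2}$ with a one-step It\^o estimate only handles the discrete half, $A^{1/2}r(\tau A)\int_{t_{n-1}}^{t_n}g\,dW$.
\end{itemize}
Controlling the continuous near-diagonal part is exactly what $\SMR$ does. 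No sectoriality-only kernel estimate can absorb it, and your plan never invokes hypothesis (1) there. The averaging identity you quote also runs the wrong way: it bounds $u(t)$ by $u(t_n)$, whereas you need a grid value in terms of earlier times.

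By contrast, the obstacle you flag is harmless. For $m=n-j\ge2$ and $|\sigma-m\tau|\le\tau$, the $H^1$-calculus gives $\|A^{1/2}(r(\tau A)^m-e^{-\sigma A})\|\lesssim\tau^{-1/2}(m-1)^{-3/2}$. Type $2$ of $X_0$, i.e.\ $L^2(I;\gamma(H,X_0))\hookrightarrow\gamma(L^2(I;H),X_0)$, lets such operator-norm bounds pass through the stochastic integral without any $\gamma$-boundedness.

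\textbf{Two ways to close the gap.} First option: apply $\SMR$ a second time to the localized forcings $g\1_{(t_{n-1},t_{n+1})}$. Their mild solution coincides on $[t_n,t_{n+1})$ with the near-diagonal part of $u$, and you sum using bounded overlap.

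Second, cleaner option: compare with a different continuous object.
\begin{itemize}
\item For $r(z)=e^{-z}$ one has $U_n=v(t_n)$, where $v$ is the mild solution driven by $\tilde g(s)=e^{-(s-t_j)A}g(s)$ for $s\in[t_j,t_{j+1})$.
\item $\tilde g$ is progressively measurable with $\|\tilde g\|\le M\|g\|$, $M=\sup_{t\ge0}\|e^{-tA}\|$, by the ideal property.
\item For $t\in[t_{n-1},t_n)$ the semigroup law gives
\[
A^{1/2}U_n=e^{-(t_n-t)A}A^{1/2}v(t)+A^{1/2}e^{-\tau A}\int_t^{t_n}g(s)\,dW(s),
\]
so the singular kernel is replaced by $e^{-\tau A}$.
\item Average $\tau\|A^{1/2}U_n\|^p$ over $t\in[t_{n-1},t_n)$ and apply $\SMR$ to $\tilde g$. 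Then use $\|A^{1/2}e^{-\tau A}\|\lesssim\tau^{-1/2}$ and $\E\|\int_t^{t_n}g\,dW\|^p\lesssim\tau^{p/2-1}\E\|g\|^p_{L^p(t_{n-1},t_n;\gamma(H,X_0))}$.
\item This yields $\DSMR$ for the exponential scheme with constant $K(C^A_{\SMR}+1)$.
\end{itemize}
For general $r$, compare with the exponential scheme instead of with $u$. The difference $\sum_{j<n}A^{1/2}(r(\tau A)^{n-j}-e^{-(n-j)\tau A})\int_{t_j}^{t_{j+1}}g\,dW$ is purely discrete. By your $H^1$-calculus argument its kernel is $\lesssim\tau^{-1/2}(n-j)^{-3/2}$ for all $n-j\ge1$. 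Your triangle inequality, BDG and Young argument then applies verbatim.
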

In particular, note that Theorem \ref{thm:SMR iff DSMR} implies that Definition \ref{def:DSMR} does not depend on the choice of the function $r$. Consequently, we get the following sufficient condition for $\DSMR$.

\begin{theorem}\label{thm:DSMR in Lq spaces}
  Let  $T \in (0,\infty]$ and $p \in (2,\infty)$, where we allow $p=2$ if $q=2$.   Suppose that $A$ admits a bounded $H^\infty$-calculus on $X_0$ with angle less than $\pi/2$. 
    Then  $R \in \DSMR(p,T)$. 

    Moreover, there is $C>0$, independent of $\tau$, such that for every $g \in L^p_{\mathbb{F}}(\Omega\times(0,T);\gamma(H,X_0))$  the approximation scheme $(U_n)_{n=0}^N$ given in \eqref{Eq:Definition of approximation scheme} satisfies the maximal estimate
     \begin{equation}\label{ineq:discrete max estimate}
     \Big ( \E \sup_{n=1,\dots, N} \| U_{n} \|^p_{(X_0,X_1)_{1/2-1/p,p}} \Big)^{1/p}\le C \|g\|_{L^p(\Omega \times (0,T); \gamma(H, X_0))} .
   \end{equation}
\end{theorem}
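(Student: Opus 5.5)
The plan is to combine the continuous-time results with the equivalence between continuous and discrete stochastic maximal regularity. First, Theorem \ref{thm:SMR in Lq spaces} applies, since $A$ has a bounded $H^\infty$-calculus of angle less than $\pi/2$. This gives $A\in\SMR(p,T)$ together with the continuous maximal estimate \eqref{ineq:max estimate}. By Theorem \ref{thm:SMR iff DSMR}, we then get $R\in\DSMR(p,T)$ with
\[
C^R_{\DSMR(p,T)}\le K(C^A_{\SMR(p,T)}+1).
\]
This is the first assertion, and it holds uniformly in $\tau$ because $K$ depends only on $p$, $X_0$, $r$ and the sectoriality data of $A$.

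For the maximal estimate \eqref{ineq:discrete max estimate}, the idea is to compare $U_n$ with the continuous mild solution at the grid points. I write
\[
U_n-u(t_n)=\sum_{j=0}^{n-1}\int_{t_j}^{t_{j+1}}\bigl(r(\tau A)^{n-j}-e^{-(t_n-s)A}\bigr)g(s)\,dW(s).
\]
The term $u(t_n)$ is controlled by \eqref{ineq:max estimate}, since $\sup_n\|u(t_n)\|\le\sup_t\|u(t)\|$. It remains to bound $\E\sup_n\|U_n-u(t_n)\|^p$ in the trace space $(X_0,X_1)_{1/2-1/p,p}$.

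To do this I split the error kernel. Set $E_k(\tau A)=r(\tau A)^k-e^{-k\tau A}$, and write
\[
r(\tau A)^{n-j}-e^{-(t_n-s)A}=E_{n-j}(\tau A)+e^{-(t_{n}-t_{j+1})A}\bigl(e^{-\tau A}-e^{-(t_{j+1}-s)A}\bigr).
\]
Both pieces are operators of the form $f(\tau A)$ applied along the discrete convolution. The bounded $H^\infty$-calculus turns the needed operator bounds into scalar estimates on a sector. From the consistency and $A$-stability of $r$ with $r(\infty)=0$, one has $|r(z)^k-e^{-kz}|\lesssim \min(1,|z|)\,k^{-1}$-type bounds. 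The form of the trace norm is used as follows: the norm of $(X_0,X_1)_{1/2-1/p,p}$ is equivalent to
\[
\Bigl(\int_0^\infty\|t^{1/2+1/p}Ae^{-tA}x\|^p\,\tfrac{dt}{t}\Bigr)^{1/p},
\]
or in discrete form to $\bigl(\sum_k \tau\,\|(k\tau)^{1/2-1/p}A\,r(\tau A)^k x\|^p\bigr)^{1/p}$ plus a low-frequency term. Using this, I reduce the supremum over $n$ to an $\ell^p$-sum over $n$ of stochastic integrals. That sum is bounded by $\DSMR$-type square-function estimates, namely $\gamma$-boundedness of $\{f(\tau A)\}$, which follows from the calculus on $L^q$-type spaces, together with the Burkholder–Davis–Gundy inequality in $\gamma$-spaces.

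The main obstacle is removing the supremum without an $\epsilon$-loss. I would handle it by the discrete analogue of the argument of \cite[Theorem 1.2]{NVWSMR}. The sup of $\|U_n\|$ in the trace space is dominated by the sup over $n$ of the $\ell^p$-norm of $k\mapsto (k\tau)^{1/2-1/p}A r(\tau A)^kU_n$. The identity $r(\tau A)^kU_n=U_{n+k}-(\text{stochastic integral over }[t_n,t_{n+k}])$ then converts this into $\sum_m\tau\|A^{1/2}U_m\|^p$, which is already bounded by $\DSMR$, plus a term controlled by Doob's maximal inequality and the $\gamma$-boundedness of the discrete Poisson-type operators $\{(k\tau A)^{1/2}r(\tau A)^k\}_k$. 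This last family is $R$-bounded thanks to the $H^\infty$-calculus and the scalar bound $\sup_k|(kz)^{1/2}r(z)^k|<\infty$ on a sector.
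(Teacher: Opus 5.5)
Your proof of $R\in\DSMR(p,T)$ is the paper's argument (Theorem~\ref{thm:SMR in Lq spaces} plus Theorem~\ref{thm:SMR iff DSMR}). For the maximal estimate the paper does not argue directly. It quotes the discrete trace estimate \cite[Theorem 6.1 and Proposition 6.3]{DSMR}, which is stated in the unshifted scale (noise in $\gamma(H,X_{1/2})$, trace space $(X_0,X_1)_{1-1/p,p}$), and transfers it through the isomorphism \eqref{isomorphism between real int spaces}.

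Your self-contained route can work, but as written its central step fails. The discretization of $\int_0^\infty\|t^{1/2+1/p}Ae^{-tA}x\|^p\,\frac{dt}{t}$ is $\sum_k\tau\|(k\tau)^{1/2}Ar(\tau A)^kx\|^p$. Your weight $(k\tau)^{1/2-1/p}$ describes $(X_0,X_1)_{1/2,p}$, not the trace space. More importantly, with any weight of the form $(k\tau)^{1/2}A$, substituting $r(\tau A)^kU_n=U_{n+k}-(\dots)$ produces $\sum_k\tau(k\tau)^{p/2}\|AU_{n+k}\|^p$. $\DSMR$ does not control this, so the claimed reduction to $\sum_m\tau\|A^{1/2}U_m\|^p$ does not happen.

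What you need is the unweighted norm $\|x\|_{(X_0,X_1)_{1/2-1/p,p}}\eqsim\|A^{1/2}e^{-\cdot A}x\|_{L^p(\R_+;X_0)}$. This is exactly where the paper's isomorphism enters, combined with $\|y\|_{(X_0,X_1)_{1-1/p,p}}\eqsim\|Ae^{-\cdot A}y\|_{L^p(\R_+;X_0)}$. You also need the $\tau$-uniform bound $\int_0^\infty\|A^{1/2}e^{-tA}x\|^p\,dt\lesssim\sum_{k\ge 0}\tau\|A^{1/2}r(\tau A)^kx\|^p$, which must be proved. The two sides differ at frequencies $\gtrsim\tau^{-1}$, not at low frequencies, and the $k=0$ term absorbs this. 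One way is to write $e^{-k\tau A}-r(\tau A)^k=\sum_{i<k}e^{-(k-1-i)\tau A}(e^{-\tau A}-r(\tau A))r(\tau A)^i$. Then use $\|e^{-m\tau A}(e^{-\tau A}-r(\tau A))\|\lesssim(1+m)^{-\ell-1}$ (consistency plus the $H^1$-calculus) and a discrete Young inequality.

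Second, Doob's inequality does not apply in the form you indicate. For fixed $k$, the map $n\mapsto\sum_{j=n}^{n+k-1}r(\tau A)^{n+k-j}\int_{t_j}^{t_{j+1}}g\,dW$ is not a martingale. The $\gamma$-boundedness of $\{(k\tau A)^{1/2}r(\tau A)^k\}_k$ is not what controls the supremum. The missing idea is to reindex by $m=n+k$. Then $A^{1/2}r(\tau A)^kU_n=M^{(m)}_n\coloneqq A^{1/2}\sum_{j<n}r(\tau A)^{m-j}\int_{t_j}^{t_{j+1}}g\,dW$. For fixed $m$, the process $(M^{(m)}_n)_{n\le m}$ is an $(\F_{t_n})$-martingale with terminal value $A^{1/2}U_m$. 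Since $\sup_n\sum_{m\ge n}\tau\|M^{(m)}_n\|^p\le\sum_m\tau\sup_{n\le m}\|M^{(m)}_n\|^p$, Doob gives $\E\sup_n\|U_n\|^p_{(X_0,X_1)_{1/2-1/p,p}}\lesssim\sum_{m\ge 1}\tau\E\|A^{1/2}U_m\|^p$. This is controlled by $\DSMR(p,\infty)$ after extending $g$ by zero, which is allowed since $A$ is invertible.

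With these repairs you get a complete proof using only $\DSMR$, the shift isomorphism and Doob. The comparison with $u(t_n)$ should be dropped. In particular, the scalar bound you quote for $r(z)^k-e^{-kz}$ is false for implicit Euler at $|z|\sim 1/k$, where the difference is of order $k^{-1}$, not $k^{-2}$.
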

\begin{proof}
    The first part follows from Theorem \ref{thm:SMR in Lq spaces}  and Theorem \ref{thm:SMR iff DSMR}.  The discrete maximal estimate \eqref{ineq:discrete max estimate} follows from \cite[Theorem 6.1 and Proposition 6.3]{DSMR} by applying a shift of 1/2 to the spatial regularity. Indeed, it suffices to observe that 
   \begin{equation}\label{isomorphism between real int spaces}
       A^{-1/2} \colon (X_0,X_1)_{1/2-1/p,p} \to (X_0,X_1)_{1-1/p,p} \text{ is an isomorphism.}
   \end{equation}
   The latter can be proved as follows. Since $A$ admits a bounded $H^\infty$-calculus, and in particular has BIP, $A^{-1/2}$ acts as an isomorphism from $X_0$ onto $X_{1/2}\coloneq [X_0,X_1]_{1/2}$ and from $X_{1/2}$ onto $X_1$. Hence, by interpolation, $A^{-1/2}$ is an isomorphism from $(X_0,X_{1/2})_{1-2/p,p} $ onto $ (X_{1/2},X_1)_{1-2/p,p}$. The reiteration theorem (cf.\,\cite[Theorem L.3.1]{analysis_volume_3})
   gives $(X_0,X_{1/2})_{1-2/p,p} =  (X_0,X_1)_{1/2-1/p,p}$ and $ (X_{1/2},X_1)_{1-2/p,p}= (X_0,X_1)_{1-1/p,p}$ with equivalent norms, and thus \eqref{isomorphism between real int spaces} follows. 
\end{proof}

\begin{remark}\label{remark:weights and theta stable}
    For the sake of simplicity, we have restricted our presentation to the setting without temporal weights. However, we note that the general framework developed in \cite{DSMR} is established in a more general weighted setting, accommodating appropriate time weights. Hence, the equivalence between $\SMR$ and $\DSMR$, as well as the discrete maximal estimate, hold in the weighted setting. 
    Moreover, we can allow for $A(\theta)$-stable rational functions $r$, that is, $|r(z)|\le 1$ for $z\in \Sigma_\theta$, provided the stability angle $\theta$ is strictly greater than the sectoriality angle of the operator $A$, namely, $\theta \in (\omega(A),\pi/2]$. 
\end{remark}

\section{$H^\infty$-calculus for the discrete operator $A_h$}\label{sec:calculus proof}

The first main result of this paper is established in this section: we show that, up to a shift, the discrete operator $A_h$ admits an $H^\infty$-calculus that is bounded uniformly with respect to the mesh size $h$.

Let $\O$ be a bounded convex domain in $\R^3$ of class $C^2$. We consider the elliptic operator
$A = -\nabla \cdot a\nabla  +  b \cdot \nabla +c$ with Dirichlet boundary conditions. 
The coefficients $a_{ij},b_i,c \in L^\infty$ are real-valued and the matrix $a=(a_{ij})$ is symmetric and elliptic. Let $A_h\colon S_h \to S_h$ denote the discretization of $A$ given by 
$$(A_h u_h,v_h)_{L^2} \coloneq (a\nabla u_h,\nabla v_h)_{L^2} + (b\cdot \nabla u_h,v_h)_{L^2}+(cu_h,v_h)_{L^2}, \quad u_h,v_h \in S_h. $$
We also let $A^\#= - \nabla \cdot a\nabla$ denote the principal part of $A$. As usual, $X^q_{0,h}$ denotes $S_h$ with the $L^q$-norm.

\begin{theorem} 
\label{thm:H-calc for Ah} Let $q\in(1,\infty)$ and $\nu \in(0,\pi/2)$. Suppose that Assumption \ref{main assumptions} holds for $A^\#$. 
 Then there exist a sufficiently large $\lambda_0 \ge0$ and a constant $C>0$, both independent of $h$, such that $\lambda_0+  A_h$ admits a bounded $H^\infty(\Sigma_\nu)$-calculus on $X^q_{0,h}$ with constant $C$. Moreover, $\lambda_0+A_h$ is invertible with  $\| (\lambda_0+A_h)^{-1}\|_{\L(X^q_{0,h})} \le C$.
\end{theorem}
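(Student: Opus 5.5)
The proof has two stages, following the overview: first establish the result for the principal part $A_h^\#$ (no shift needed), then treat the lower-order terms as a perturbation.

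\emph{Principal part.} For the principal part, the strategy is to transfer the calculus of $A_q^\#$ from \eqref{principal part has calculus} to $A_h^\#$. The key identity is
\[
(A_h^\#)^{-1}P_h = R_h^\#(A^\#)^{-1}.
\]
From it, the discrete resolvent can be compared with the continuous one:
\[
R(\lambda,A_h^\#)P_h - P_hR(\lambda,A_q^\#) = \text{error term}.
\]
This error term is controlled using the $W^{1,q}$-stability of $R_h^\#$, the estimate \eqref{Convergence estimate for I-Ph}, the inverse estimates \eqref{ineq:inverse estimate for Xq spaces}, and the interpolated resolvent bounds \eqref{sectoriality of Aq interpolated}. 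The target is an estimate of the form
\[
\|R(\lambda,A_h^\#)P_h - P_hR(\lambda,A_q^\#)P_h\|_{\mathcal L(L^q)} \lesssim \min\{h^{2}, |\lambda|^{-1}\}.
\]
It is possibly weaker, say with a factor $(h^2|\lambda|)^{\epsilon}(1+h^2|\lambda|)^{-\epsilon}/|\lambda|$, which is still integrable against $|dz|$ on $\partial\Sigma_\nu$.

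Write $f(A_h^\#) = P_h f(A_q^\#)P_h + E_h(f)$ via the Dunford integral. The first term is bounded by $\|P_h\|^2$ times the calculus constant of $A_q^\#$. For the remainder, split the contour into $|z|\le h^{-2}$ and $|z|\ge h^{-2}$. The scale-invariant bound above then gives $\|E_h(f)\| \lesssim \|f\|_\infty$ by a direct integration: the part $\int_{|z|\le h^{-2}} h^{2\epsilon}|z|^{\epsilon-1}\,|dz|$ is bounded, and likewise the tail. This yields angle $\nu$ for every $\nu>0$, hence angle zero.

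Uniform sectoriality and invertibility of $A_h^\#$ on $X^q_{0,h}$ also follow from the same comparison at $\lambda=0$. Using $W^{1,q}$-stability of $R_h$ together with Poincaré and duality, one gets $\|(A_h^\#)^{-1}\|\lesssim 1$.

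\emph{Main obstacle.} The hardest step is producing the error bound with decay in $h^2|\lambda|$ for large $|\lambda|$, where $P_hR(\lambda,A_q^\#)$ alone is not close to the discrete resolvent. For large $|\lambda|$ I would first bound the two resolvents separately by $C/|\lambda|$, which requires uniform discrete resolvent bounds on $L^q$. These I would get by duality, which is why the assumption is imposed at both $q$ and $q'$. For moderate $|\lambda|$ I would use the Ritz-projection identity:
\[
R(\lambda,A_h^\#)P_h - R_h^\# R(\lambda,A_q^\#) = R(\lambda,A_h^\#)\bigl(\lambda (R_h^\#-P_h)\bigr)R(\lambda,A_q^\#).
\]
Then $R_h^\#-I$ is estimated from $X^q_{1/2}$ to $L^q$ by an Aubin–Nitsche argument with a factor $h$, paired with the $h^{-1}$ gain from \eqref{ineq:inverse estimate for Xq spaces}.

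\emph{Lower-order terms.} For the full operator, write
\[
\lambda_0+A_h = (\lambda_0 + A_h^\#) + B_h, \qquad (B_hu_h,v_h) = (b\cdot\nabla u_h + cu_h, v_h),
\]
so that $B_h = P_h(b\cdot\nabla + c)$. Then $\|B_h u_h\|_{L^q} \lesssim \|u_h\|_{W^{1,q}}$. Next, use the norm equivalence $X^q_{1/2,h} \simeq W^{1,q}_0\cap S_h$ for $A_h^\#$, i.e.\ the Ritz-stability characterization \eqref{Xah norm equiv Xa norm intro}, which should follow from BIP and the comparison estimates. With it, $B_h$ is a perturbation of order $1/2$, uniformly in $h$. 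The standard lower-order perturbation theorem for $H^\infty$-calculus (as in \cite[Theorem 16.2.7]{analysis_volume_3}) then gives the calculus for $\lambda_0+A_h$ on $\Sigma_\nu$, for $\lambda_0$ large depending only on the uniform constants. Invertibility comes from the Neumann series
\[
(\lambda_0+A_h)^{-1} = (\lambda_0+A_h^\#)^{-1}\bigl(I + B_h(\lambda_0+A_h^\#)^{-1}\bigr)^{-1},
\]
where $\|B_h(\lambda_0+A_h^\#)^{-1}\|\lesssim \lambda_0^{-1/2}$.
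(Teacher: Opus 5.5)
Your two-stage architecture and the lower-order step coincide with the paper's. In that step you bound $B_h$ by $(A_h^\#)^{1/2}$ via $W^{1,q}$ and the discrete/continuous norm equivalence, then apply \cite[Theorem 16.2.7]{analysis_volume_3}. The calculus step for $A_h^\#$, however, does not close as written.

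\emph{First gap: the contour tail.} The estimate for $E_h(f)$ fails at infinity. Both your target $\min\{h^2,|\lambda|^{-1}\}$ and the weaker variant $(h^2|\lambda|)^{\epsilon}(1+h^2|\lambda|)^{-\epsilon}|\lambda|^{-1}$ are $\eqsim|\lambda|^{-1}$ for $|\lambda|\gtrsim h^{-2}$. Since $\int_{h^{-2}}^\infty r^{-1}\,dr=\infty$, the claim ``likewise the tail'' is false. Bounding the two resolvents separately by $C/|\lambda|$, as you propose for large $|\lambda|$, can never do better than this. Because $A_q^\#$ is unbounded, the contour for $P_hf(A_q^\#)P_h$ cannot simply be truncated either.

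The paper's key device is to compare instead with $J_h=(h^2+A_q^\#)(1+h^2A_q^\#)^{-1}$. This operator has the same $H^\infty$-constant as $A_q^\#$ but satisfies $\|J_h\|\lesssim h^{-2}$, like $A_h^\#P_h$. Both resolvents are then holomorphic on $\C\setminus(0,c^*h^{-2})$, so the contour can be closed at $\Re z=c^*h^{-2}$. An $O(h^2)$ resolvent difference on a contour of length $O(h^{-2})$ then suffices.

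Your comparison with $P_hf(A_q^\#)P_h$ can be rescued, but only with a decay you did not identify. The difference equals $z^{-1}[A_h^\#R(z,A_h^\#)P_h-P_hA_q^\#R(z,A_q^\#)P_h]$. Combining $\|A_h^\#\|\lesssim h^{-2}$, the inverse estimate $\|P_h\|_{\L(X^q_0,X^q_{1/2})}\lesssim h^{-1}$ and \eqref{sectoriality of Aq interpolated} gives the bound $\lesssim|z|^{-1}(h^2|z|)^{-1/2}$ for $|z|\ge c^*h^{-2}$, which is integrable.

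\emph{Second gap: uniform sectoriality.} Any such argument needs $\|R(\lambda,A_h^\#)\|_{\L(X^q_{0,h})}\lesssim|\lambda|^{-1}$, uniformly in $h$, for $\lambda\notin\Sigma_\nu$ with $|\lambda|\lesssim h^{-2}$. This is what makes the resolvent difference $O(h^2)$ on the bounded part of the contour, and your plan does not provide it.
\begin{itemize}
\item It does not follow from the comparison at $\lambda=0$, which only gives $\|(A_h^\#)^{-1}\|\lesssim 1$.
\item Duality merely transfers such a bound between $q$ and $q'$; it does not produce one.
\item Your Ritz-projection identity yields the bound only through an absorption argument whose small factor is $Ch|\lambda|^{1/2}$, so only for $|\lambda|\le\omega_0h^{-2}$.
\item The Neumann series covers only $|\lambda|\ge 2\|A_h^\#\|$.
\end{itemize}
In general the window $\omega_0h^{-2}\le|\lambda|\le 2\|A_h^\#\|$ is covered by neither argument. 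This window is the genuinely hard part of the paper's Lemma \ref{lemma:sectoriality of Ah}. For $|\lambda|\ge\omega_0h^{-2}$ the paper proves $\|G_h^x(\cdot,\lambda)\|_{L^1}\lesssim|\lambda|^{-1}$ for the discrete resolvent kernel, via weighted $L^2$ estimates as in \cite{Bakaev-Thomee-maximum-norm}, and concludes by Schur's lemma.
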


\begin{remark}
    The constant $\lambda_0\ge 0$ in Theorem \ref{thm:H-calc for Ah} in general depends on $q$, the coefficients of $A$, and the angle $\nu$. In the case where $b_i=c=0$ one can take $\lambda_0=0$. 
\end{remark}
The proof of Theorem \ref{thm:H-calc for Ah} is divided into two main steps, which we carry out in the following subsections. We define the perturbation operator $B = b\cdot \nabla + c$ containing the lower-order terms, so that $A = A^\# + B$. Correspondingly, we write $A_h = A_h^\# + B_h$, where $A_h^\#$ and $B_h$ denote the spatial discretizations of $A^\#$ and $B$, respectively.  In Section \ref{sec:principal part of Ah} we prove that the principal part $A_h^\#$ admits a bounded $H^\infty$-calculus of angle zero, uniformly in the mesh size $h$. Subsequently, in Section \ref{sec:with lower-order}, we employ a perturbation argument to extend the calculus to the full operator $A_h$. This perturbation step naturally yields the required shift $\lambda_0 +A_h$ needed to absorb the lower-order terms.

\subsection{$H^\infty$-calculus for the principal part $A_h^\#$} \label{sec:principal part of Ah}
In this section we prove that the principal part $A_h^\#$ admits a bounded $H^\infty$-calculus of angle zero, uniformly in the mesh size $h$. This result is stated in the following theorem.

\begin{theorem} \label{theorem:Hcalc for Ah}
Let $q \in (1,\infty)$ and suppose that Assumption \ref{main assumptions} holds for the principal part $A^\#$ of $A$. Then for every $\nu \in (0,\pi/2)$ there is a constant $C>0$, independent of $h$, such that  
\begin{equation}\label{proof of main thm sufficient condition 1}
    \|f(A_h^\#)P_h\|_{\L(X^q_{0})} \le C \|f\|_{H^\infty(\Sigma_\nu)}, \quad f \in H^\infty(\Sigma_\nu) \cap H^1(\Sigma_\nu). 
\end{equation}
\end{theorem}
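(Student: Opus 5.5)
The plan is to compare the discrete functional calculus with the continuous one through the Dunford integral, so that the $H^\infty$ bound for $A_q^\#$ in \eqref{principal part has calculus} can be transferred to $A_h^\#$. For $f\in H^\infty(\Sigma_\nu)\cap H^1(\Sigma_\nu)$, fix $\nu'\in(0,\nu)$ and write
\begin{equation*}
f(A_h^\#)P_h = P_h f(A_q^\#) + \frac{1}{2\pi i}\int_{\partial\Sigma_{\nu'}} f(z)\big(R(z,A_h^\#)P_h - P_hR(z,A_q^\#)\big)\,dz .
\end{equation*}
The first term is bounded by $C\|f\|_{H^\infty}$ by \eqref{stability of Ph on X0} and \eqref{principal part has calculus}. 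For the second term, it suffices to prove a resolvent error estimate whose size makes $\int_{\partial\Sigma_{\nu'}}|dz|$ converge after multiplication by $|f(z)|\le\|f\|_\infty$. The difficulty is that no bound of the form $|z|^{-1}$ can be integrable at both $0$ and $\infty$. I would therefore use the identity
\begin{equation*}
R(z,A_h^\#)P_h - P_hR(z,A_q^\#) = R(z,A_h^\#)A_h^\#(R_h - P_h)(A_q^\#)^{-1} \cdot A_q^\# R(z,A_q^\#)
\end{equation*}
(up to sign), which follows from $A_h^\# R_h = P_h A^\#$ on $X^q_1$.

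The error operator $E_h(z) := R(z,A_h^\#)P_h - P_hR(z,A_q^\#)$ is handled by a mesh-scale decomposition. For $|z|\le h^{-2}$, I use the identity above together with the estimate $\|(R_h-P_h)v\|_{L^q}\le Ch^2\|v\|_{W^{2,q}}$. This estimate comes from Assumption \ref{main assumptions}: first the duality (Aubin–Nitsche) argument with $W^{2,q'}$ regularity gives the $L^q$ error of $R_h$, and then \eqref{Convergence estimate for I-Ph} gives the error of $P_h$. I also need uniform discrete resolvent bounds, namely $\|zR(z,A_h^\#)P_h\|_{\L(X^q_0)}\le C$ for $z\notin\Sigma_{\nu'}$, together with $\|R(z,A_h^\#)A_h^\#\|\le C$. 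These combine into $\|E_h(z)\|\lesssim h^2$. For $|z|\ge h^{-2}$, I bound the two resolvents separately by $C/|z|$. This gives
\begin{equation*}
\|E_h(z)\|_{\L(X^q_0)}\lesssim \min(h^2,|z|^{-1}).
\end{equation*}
This is still only borderline, with a logarithm lost, since $\int \min(h^2,|z|^{-1})|dz|$ diverges like $\log$ at $\infty$. So the estimate must be refined: I would aim for $\|E_h(z)\|\lesssim h^{2\epsilon}|z|^{-1+\epsilon}$ for large $|z|$ and $\lesssim h^2|z|^{0}$ for small $|z|$. In fact I need $h^{2}(h^{-2}|z|^{-1})^{\epsilon}$-type decay for $|z|\ge h^{-2}$ and $(h^2|z|)^{\epsilon}|z|^{-1}$ for $|z|\le h^{-2}$; integrating then yields a constant uniform in $h$. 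The small-$|z|$ improvement comes from interpolating the estimate $\|(R_h-P_h)(A_q^\#)^{-1}\|\lesssim h^2$ against the stability bound $\|(R_h-P_h)(A_q^\#)^{-1/2}\|\lesssim h$, obtained from \eqref{ineq:assumption Rh stability} and \eqref{X12 characterization}. One then uses $A_q^\#R(z,A_q^\#)(A_q^\#)^{-\epsilon}$-type bounds from \eqref{sectoriality of Aq interpolated} to gain the factor $|z|^{-\epsilon}$. The large-$|z|$ improvement uses the inverse estimate \eqref{ineq:inverse estimate for Xq spaces} to trade negative powers of $z$ for powers of $h^{-2}$ on the discrete side.

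The main obstacle is the uniform discrete sectoriality (resolvent bound) of $A_h^\#$ on $X^q_{0,h}$ for all $q\in(1,\infty)$. I would establish it first, as a separate lemma, by the same route. For $|z|\lesssim h^{-2}$, I would take the difference with the continuous resolvent using the Ritz identity, which needs $W^{1,q}$ stability and $W^{2,q}$ regularity at $q$ and $q'$ via duality. For $|z|\gtrsim h^{-2}$, I would use a discrete argument: inverse estimates together with a Gårding-type inequality, for example testing with $|u_h|^{q-2}u_h$ replaced by its projection $P_h(|u_h|^{q-2}u_h)$. If this direct route fails, the fallback is to cite the known uniform analyticity of discrete parabolic semigroups in $L^q$ (Schatz–Thomée–Wahlbin or Geissert type results), which hold under exactly these assumptions. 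With sectoriality in hand, the contour integral estimate above closes the proof of \eqref{proof of main thm sufficient condition 1}.
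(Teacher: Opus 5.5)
Comparing with $P_hf(A_q^\#)$ instead of the paper's $f(J_h)$ is a legitimate alternative. However, the step that has to make it work, the treatment of large $|z|$, fails as you wrote it.

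\textbf{The tail estimate.} The rates you target for $|z|\ge h^{-2}$, namely $h^{2\epsilon}|z|^{-1+\epsilon}$ and $h^{2}(h^{-2}|z|^{-1})^{\epsilon}$, decay more slowly than $|z|^{-1}$. Their integrals over $|z|\ge h^{-2}$ therefore diverge. Conversely, the interpolation improvement for $|z|\le h^{-2}$ is unnecessary, since $\int_0^{h^{-2}}h^2\,ds=1$ already.

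More fundamentally, no bound obtained by estimating $R(z,A_h^\#)P_h$ and $P_hR(z,A_q^\#)$ separately can beat $|z|^{-1}$, because both equal $z^{-1}P_h$ to leading order. The missing idea is to cancel this term. Write
\[
E_h(z)=z^{-1}\bigl(A_h^\#R(z,A_h^\#)P_h-P_hA_q^\#R(z,A_q^\#)\bigr).
\]
\begin{itemize}
\item The first term is $\lesssim h^{-2}|z|^{-1}$ by \eqref{norm of AhPh} and discrete sectoriality.
\item The second term is $\lesssim h^{-1}|z|^{-1/2}$ by duality. Its adjoint acts on $v\in L^{q'}$ as $(A_{q'}^\#)^{1/2}R(\bar z,A_{q'}^\#)(A_{q'}^\#)^{1/2}P_hv$. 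So \eqref{sectoriality of Aq interpolated} and the inverse estimate \eqref{ineq:inverse estimate for Xq spaces} at $q'$ apply; elliptic regularity at $q'$ is part of the assumption.
\end{itemize}
Hence $\|E_h(z)\|\lesssim h^{-1}|z|^{-3/2}$ for $|z|\ge h^{-2}$, which integrates to $O(1)$. Together with your $O(h^2)$ bound for $|z|\le h^{-2}$, this closes your argument.

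\textbf{Comparison with the paper.} The paper avoids the tail entirely. It compares $f(A_h^\#)P_h$ with $f(J_h)$, where $J_h=(h^2+A_q^\#)(1+h^2A_q^\#)^{-1}$. This operator has the same $H^\infty$-constant as $A_q^\#$ but is bounded, with $\|J_h\|\lesssim h^{-2}$ by \eqref{norm of Aqh}. Consequently:
\begin{itemize}
\item both integrands are analytic off $(0,c^*h^{-2})$, so the contour can be closed by a vertical segment at $\Re z=c^*h^{-2}$;
\item on the slanted pieces, which have length $O(h^{-2})$, the $O(h^2)$ resolvent errors \eqref{ineq:Resolvent estimate Aq-Aqh}--\eqref{ineq:Resolvent estimate Aq-Ah} suffice;
\item on the vertical segment, each resolvent is $O(h^2)$ by a Neumann series.
\end{itemize}
The paper thus buys a trivial treatment of infinity at the price of an auxiliary operator. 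Your corrected route compares directly with $A_q^\#$, at the price of the duality and inverse-estimate step above.

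\textbf{Uniform sectoriality.} Both routes need uniform sectoriality of $A_h^\#$.
\begin{itemize}
\item Your small-$|z|$ comparison via the Ritz identity requires an absorption argument, because the identity contains $A_h^\#R(z,A_h^\#)$ itself; the paper handles this step by absorption.
\item Your G{\aa}rding-type test with $P_h(|u_h|^{q-2}u_h)$ has no evident coercivity. The paper treats $|z|\gtrsim h^{-2}$ through $L^1$ bounds on the discrete Green's function (Bakaev--Thom\'ee) and Schur's test.
\item The analyticity results you cite as a fallback are, as far as I know, proved under stronger regularity of the coefficients than assumed here, so they cannot simply be quoted.
\end{itemize}
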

As mentioned in the introduction, Theorem \ref{theorem:Hcalc for Ah} allows us to characterize the discrete fractional spaces 
$X^{q,\#}_{\alpha,h}$, which are defined to be $S_h$ with the $\|(A_h^\#)^\alpha \cdot \|_{L^q}$-norm.  Specifically, for every $\alpha \in [0,1/2]$, we have$$X^{q,\#}_{\alpha,h} = X^{q,\#}_\alpha \cap S_h,$$
with equivalent norms, uniformly in $h>0$. This is stated in the following:
\begin{corollary}\label{cor: Xah norm equiv Xa norm}
For every $\alpha \in [0,1]$ there is a constant $C>0$, independent of $h$, such that 
\begin{equation}\label{Ph stable Xqbeta to Xqbetah}
    \|P_h\|_{\L(X^{q,\#}_\alpha , X^{q,\#}_{\alpha,h})} \le C.
\end{equation}
 Moreover, for every $\alpha \in [0,1/2]$ there is $C>0$, independent of $h$, such that 
   \begin{equation}\label{Xah norm equiv Xa norm}
			C^{-1} \|u_h\|_{X^{q,\#}_{\alpha}} \le   \|u_h\|_{X^{q,\#}_{\alpha,h}} \le C\|u_h\|_{X^{q,\#}_{\alpha}} , \quad u_h \in S_h.
		\end{equation} 
\end{corollary}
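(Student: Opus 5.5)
We prove \eqref{Ph stable Xqbeta to Xqbetah} at the endpoints $\alpha=0$ and $\alpha=1$ and then interpolate. The case $\alpha=0$ is the stability bound \eqref{stability of Ph on X0}. For $\alpha=1$ we use the standard identity $A_h^\# R_h^\# = P_h A^\#$ on $X^{q,\#}_1$, which follows from the definitions: for $v_h\in S_h$,
\[
(A_h^\# R_h^\# u, v_h)_{L^2} = \mathfrak a^\#(R_h^\# u, v_h) = \mathfrak a^\#(u,v_h) = (A^\# u, v_h)_{L^2}.
\]
Writing $P_h u = R_h^\# u + (P_h u - R_h^\# u)$, we get
\[
\|A_h^\# P_h u\|_{L^q} \le \|P_h A^\# u\|_{L^q} + \|A_h^\#(P_h u - R_h^\# u)\|_{L^q}.
\]
The first term is bounded by $C\|u\|_{X^{q,\#}_1}$ by \eqref{stability of Ph on X0}.

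For the second term we need an inverse estimate $\|A_h^\# v_h\|_{L^q}\le Ch^{-2}\|v_h\|_{L^q}$. We obtain it by duality: for $w\in L^{q'}$,
\[
(A_h^\# v_h, P_h w)_{L^2} = (a\nabla v_h,\nabla P_h w)_{L^2} \le C\|\nabla v_h\|_{L^q}\|\nabla P_h w\|_{L^{q'}} \le Ch^{-2}\|v_h\|_{L^q}\|w\|_{L^{q'}},
\]
using \eqref{ineq:inverse estimate for W spaces} twice and \eqref{stability of Ph on X0} in $L^{q'}$. It then remains to show $\|P_h u - R_h^\# u\|_{L^q}\le Ch^2\|u\|_{W^{2,q}}$. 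By \eqref{Convergence estimate for I-Ph} this reduces to an $L^q$ error estimate for the Ritz projection, $\|u-R_h^\# u\|_{L^q}\le Ch^2\|u\|_{X^{q,\#}_1}$.

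This Ritz estimate is the main obstacle, and we would obtain it by an Aubin--Nitsche argument. First, the $W^{1,q}$ stability \eqref{ineq:assumption Rh stability} together with the interpolant $\tilde I_h$ from \eqref{ineq:Ih estimate} gives the gradient bound $\|\nabla(u-R_h^\# u)\|_{L^q}\le Ch\|u\|_{W^{2,q}}$. Next, for the $L^q$ bound we test against $\phi\in L^{q'}$ and let $z\in X^{q',\#}_1$ solve $A^\# z=\phi$, which is possible with $W^{2,q'}$ control by Assumption \ref{main assumptions}(i). This gives
\[
(u-R_h^\# u,\phi)_{L^2} = \mathfrak a^\#(u-R_h^\# u, z-\tilde I_h z) \le Ch\|u\|_{W^{2,q}}\cdot Ch\|z\|_{W^{2,q'}}.
\]
One technical point is the region $\O\setminus\O_h$ and the fact that \eqref{ineq:Ih estimate} only controls gradients on $\O_h$. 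We would handle this using that $R_h^\# u$ and $\tilde I_h z$ vanish outside $\O_h$, so the missing boundary-layer contributions are of order $h^2$ in the relevant norms.

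With both endpoints in hand, $P_h$ is bounded uniformly from $X^{q,\#}_j$ to $X^{q,\#}_{j,h}$ for $j=0,1$. We then interpolate complexly, using Lemma \ref{lem:BIP} on both sides. The continuous side uses \eqref{principal part has calculus}. The discrete side uses Theorem \ref{theorem:Hcalc for Ah}, which gives BIP of $A_h^\#$ with constants uniform in $h$, so the identification $X^{q,\#}_{\alpha,h}=[X^{q,\#}_{0,h},X^{q,\#}_{1,h}]_\alpha$ holds with $h$-independent constants. This yields \eqref{Ph stable Xqbeta to Xqbetah}.

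For \eqref{Xah norm equiv Xa norm}, the upper bound $\|u_h\|_{X^{q,\#}_{\alpha,h}}\le C\|u_h\|_{X^{q,\#}_\alpha}$ follows from \eqref{Ph stable Xqbeta to Xqbetah}, since $P_h u_h=u_h$. For the lower bound we use duality with $q'$ and the self-adjointness of $A_h^\#$ and $A^\#$ in $L^2$:
\[
\|u_h\|_{X^{q,\#}_\alpha} \eqsim \sup_{\phi}\frac{(u_h,(A^\#)^{\alpha}\phi)_{L^2}}{\|\phi\|_{L^{q'}}}.
\]
Since $u_h\in S_h$, the pairing equals $((A_h^\#)^{\alpha}u_h,(A_h^\#)^{-\alpha}P_h(A^\#)^{\alpha}\phi)_{L^2}$. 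So it suffices to have $\|(A_h^\#)^{-\alpha}P_h(A^\#)^{\alpha}\|_{\L(L^{q'})}\le C$ for $\alpha\le 1/2$. At $\alpha=1/2$ this is the dual statement of $\|(A^\#)^{1/2}(A_h^\#)^{-1/2}P_h\|\le C$, which we reduce to the $W^{1,q}$ bound via \eqref{X12 characterization}. Concretely, we would show $\|\nabla v_h\|_{L^q}\le C\|(A_h^\#)^{1/2}v_h\|_{L^q}$ using the $W^{1,q}$ stability of $R_h^\#$ and the factorization $(A_h^\#)^{-1}P_h=R_h^\#(A^\#)^{-1}$, combined with interpolation between $\alpha=0$ and $\alpha=1$ as above. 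The cases $\alpha<1/2$ then follow by interpolating between $\alpha=0$ and $\alpha=1/2$.
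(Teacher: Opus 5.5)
Your route to \eqref{Ph stable Xqbeta to Xqbetah} matches the paper: endpoints $\alpha\in\{0,1\}$, then complex interpolation using the $h$-uniform BIP from Theorem~\ref{theorem:Hcalc for Ah}. Your upper bound in \eqref{Xah norm equiv Xa norm} also matches. The gap is the lower bound $\|u_h\|_{X^{q,\#}_{1/2}}\le C\|u_h\|_{X^{q,\#}_{1/2,h}}$. Your reduction to $\|(A_h^\#)^{-1/2}P_h(A^\#)^{1/2}\|_{\L(L^{q'})}\le C$ is correct. You then dualize it back to $\|(A^\#)^{1/2}(A_h^\#)^{-1/2}P_h\|_{\L(L^q)}\le C$, which is exactly the inequality you are trying to prove, and restate it again as $\|\nabla v_h\|_{L^q}\lesssim\|(A_h^\#)^{1/2}v_h\|_{L^q}$. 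Your final sentence lists ingredients but gives no mechanism. Interpolation from $\alpha=1$ is unavailable, since the lower bound fails there ($S_h$ is not contained in $W^{2,q}$). Ritz stability at $q$ plus the factorization only give $\|v_h\|_{W^{1,q}}\lesssim\|(A^\#_q)^{-1/2}A_h^\#v_h\|_{L^q}$. Comparing that with $\|(A_h^\#)^{1/2}v_h\|_{L^q}$ is the dual form of \eqref{Ph stable Xqbeta to Xqbetah} at the exponent $q'$. That is the missing idea: the \emph{upper} bound at $q'$ yields the lower bound at $q$. Close your $L^{q'}$ reduction directly. For $\phi\in D((A^\#_{q'})^{1/2})$ put $\psi=(A^\#_{q'})^{-1/2}\phi\in X^{q',\#}_1$. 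The factorization $(A_h^\#)^{-1}P_h=R_h^\#(A^\#)^{-1}$ gives $(A_h^\#)^{-1/2}P_h(A^\#)^{1/2}\phi=(A_h^\#)^{1/2}R_h^\#\psi$, so
\[
\|(A_h^\#)^{-1/2}P_h(A^\#)^{1/2}\phi\|_{L^{q'}}=\|P_hR_h^\#\psi\|_{X^{q',\#}_{1/2,h}}\le C\|R_h^\#\psi\|_{X^{q',\#}_{1/2}}\le C\|\psi\|_{X^{q',\#}_{1/2}}=C\|\phi\|_{L^{q'}},
\]
by \eqref{Ph stable Xqbeta to Xqbetah} and \eqref{ineq:stability of Rh on Xq1/2} at $q'$. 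These are available at $q'$: Assumption~\ref{main assumptions}(i) is symmetric in $q,q'$, and $W^{1,q'}$-stability of $R_h^\#$ follows from $W^{1,q}$-stability by the duality argument of Remark~\ref{remark:assumptions}. This is the paper's duality argument.

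A secondary issue is your re-derivation of the Ritz $L^q$ estimate. It is already available as \eqref{Convergence estimate for I-Rh}, and the $\alpha=1$ endpoint as \eqref{Ph stable X1 to X1h}. Your Aubin--Nitsche sketch works for $q\le 2$ but not, as stated, for $q>2$. The problem is that $R_h^\#u=0$ on the strip $\O\setminus\O_h$, which has width of order $h^2$. On that strip $\nabla u$ is in general of size one, because its normal component need not vanish on $\partial\O$. Hence $\|\nabla(u-R_h^\#u)\|_{L^q(\O)}\ge\|\nabla u\|_{L^q(\O\setminus\O_h)}\sim h^{2/q}\gg h$. The boundary layer is $O(h^2)$ only inside the bilinear pairing, as $h^{2/q}\cdot h^{2/q'}$. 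Your bound for the interior term $\int_{\O_h}a\nabla(u-R_h^\#u)\cdot\nabla(z-\tilde I_hz)$ is then only $O(h^{1+2/q})$. For $q>2$ you can dualize the case $q'<2$ as in Lemma~\ref{lemma:properties of Rh}. Alternatively, use $\mathfrak a^\#(u-R_h^\#u,z)=\mathfrak a^\#(u-\tilde I_hu,z-R_h^\#z)$, so that only the $W^{1,q'}$-error with $q'<2$ enters.
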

Furthermore, we are able to characterize certain real interpolation scales of the discrete fractional spaces that appear in the fully discrete stochastic maximal regularity estimates. 
\begin{corollary} \label{cor:discrete vs continuous real int}
	Let $q \in (1,\infty)$, $p \in [1,\infty]$ and $\theta \in (0,1)$. Then there is a constant $C>0$, independent of $h>0$, such that the following hold:
    \begin{enumerate}
        \item [$(\mathrm{i})$] For $0 \le \beta <\alpha \le 1/2$ and $u_h \in S_h$,
	\begin{equation}\label{ineq: equivalence of real interpolation for positive beta}
	   C \|u_h\|_{(X^{q,\#}_{\beta,h},X^{q,\#}_{\alpha ,h})_{\theta,p}} \le \|u_h\|_{(X^{q,\#}_{\beta},X^{q,\#}_{\alpha })_{\theta,p}} \le C^{-1}\|u_h\|_{(X^{q,\#}_{\beta,h},X^{q,\#}_{\alpha ,h})_{\theta,p}}. 
	\end{equation}
        \item[$(\mathrm{ii})$] If $\theta \in [0,1/2)$ then for every $u_h \in S_h$, 
    \begin{equation}\label{ineq: equivalence of real interpolation for trace space}
	    C \|u_h\|_{(X^{q,\#}_{0,h},X^{q,\#}_{1,h})_{\theta,p}} \le \|u_h\|_{(X^{q,\#}_{0},X^{q,\#}_{1})_{\theta,p}} \le C^{-1}\|u_h\|_{(X^{q,\#}_{0,h},X^{q,\#}_{1,h})_{\theta,p}}. 
	\end{equation}
    \end{enumerate}
\end{corollary}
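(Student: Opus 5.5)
The plan is to derive both parts from the uniform norm equivalence \eqref{Xah norm equiv Xa norm} of Corollary \ref{cor: Xah norm equiv Xa norm} and the uniform projection bound \eqref{Ph stable Xqbeta to Xqbetah}, using the standard retraction argument for real interpolation together with reiteration.

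For part (i), fix $0\le\beta<\alpha\le 1/2$. Consider the inclusion $J_h\colon S_h\to X^{q,\#}_0$. By \eqref{Xah norm equiv Xa norm}, $J_h$ is bounded from $X^{q,\#}_{\beta,h}$ to $X^{q,\#}_\beta$ and from $X^{q,\#}_{\alpha,h}$ to $X^{q,\#}_\alpha$, with bounds uniform in $h$. Real interpolation then gives
\[
\|u_h\|_{(X^{q,\#}_\beta,X^{q,\#}_\alpha)_{\theta,p}}\lesssim\|u_h\|_{(X^{q,\#}_{\beta,h},X^{q,\#}_{\alpha,h})_{\theta,p}}.
\]
For the converse inequality, view $P_h$ as a map from $X^{q,\#}_\beta$ to $X^{q,\#}_{\beta,h}$ and from $X^{q,\#}_\alpha$ to $X^{q,\#}_{\alpha,h}$. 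By \eqref{Ph stable Xqbeta to Xqbetah} both maps are uniformly bounded, so $P_h$ is uniformly bounded between the corresponding real interpolation spaces. Since $P_hu_h=u_h$ for $u_h\in S_h$, we get
\[
\|u_h\|_{(X^{q,\#}_{\beta,h},X^{q,\#}_{\alpha,h})_{\theta,p}}=\|P_hu_h\|\lesssim\|u_h\|_{(X^{q,\#}_\beta,X^{q,\#}_\alpha)_{\theta,p}}.
\]
The endpoint cases $p=1,\infty$ need no special treatment, because the interpolation property of the real $K$-method holds for all $p\in[1,\infty]$ and its constant is independent of the spaces.

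For part (ii), take $\theta\in(0,1/2)$; the case $\theta=0$ is read as the base space and is covered by \eqref{Xah norm equiv Xa norm} with $\alpha=0$. The idea is to reduce to part (i) with $\beta=0$ and $\alpha=1/2$ via reiteration. On the continuous side, $X^{q,\#}_{1/2}=[X^{q,\#}_0,X^{q,\#}_1]_{1/2}$ by Lemma \ref{lem:BIP}, so the reiteration theorem gives
\[
(X^{q,\#}_0,X^{q,\#}_1)_{\theta,p}=(X^{q,\#}_0,X^{q,\#}_{1/2})_{2\theta,p}
\]
with equivalent norms. On the discrete side the same identity must hold with constants independent of $h$, and this is the main obstacle. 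The reiteration constants depend only on the constants in the embeddings of $X_{1/2}$ into the intermediate classes $J_{1/2}\cap K_{1/2}$ between $X_0$ and $X_1$. For the discrete couple these are controlled by the BIP constants of $A_h^\#$ together with the sectoriality constants of $A_h^\#$ and its inverse. All of these are uniform in $h$ by Theorem \ref{theorem:Hcalc for Ah}, applied with $f(z)=z^{is}$ and with resolvent-type functions.

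Concretely, I would check that $\|u_h\|_{X^{q,\#}_{1/2,h}}\lesssim\|u_h\|_{0}^{1/2}\|u_h\|_{1}^{1/2}$, which comes from the moment inequality for sectorial operators with uniform constants. I would also check that $X^{q,\#}_{1/2,h}\hookrightarrow (X^{q,\#}_{0,h},X^{q,\#}_{1,h})_{1/2,\infty}$ uniformly. This follows because the complex interpolation space embeds into the real one with universal constants, and Lemma \ref{lem:BIP} identifies $X^{q,\#}_{1/2,h}$ with $[X^{q,\#}_{0,h},X^{q,\#}_{1,h}]_{1/2}$ with uniform constants. Alternatively, one can estimate $K(t,u_h)$ directly by splitting $u_h$ with $t(t+A_h^\#)^{-1}$.

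Once the discrete reiteration identity is established uniformly in $h$, part (ii) follows by combining the two reiteration identities with part (i) at $\beta=0$, $\alpha=1/2$ and interpolation parameter $2\theta\in(0,1)$.
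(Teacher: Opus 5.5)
Your proposal is correct and follows the paper's proof: part (i) interpolates the inclusion and $P_h$ using \eqref{Xah norm equiv Xa norm} and \eqref{Ph stable Xqbeta to Xqbetah}. Part (ii) reduces to part (i) with $\beta=0$, $\alpha=1/2$ through the uniform-in-$h$ reiteration identity $(X^{q,\#}_{0,h},X^{q,\#}_{1,h})_{\theta,p}=(X^{q,\#}_{0,h},X^{q,\#}_{1/2,h})_{2\theta,p}$. The only difference is cosmetic: the paper gets the uniform $J_{1/2}\cap K_{1/2}$ embeddings from uniform analytic-semigroup bounds via Lunardi's Proposition 2.2.15, whereas you obtain them from the moment inequality and the complex-into-real embedding (or a direct resolvent splitting of the $K$-functional).
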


The proof of Theorem \ref{theorem:Hcalc for Ah} is divided into two parts. First, in Section \ref{sec:properties of Rh}, we establish essential properties of the operator $A_h^\#$ and its associated Ritz projection $R_h$. Subsequently, in Section \ref{sec: proof of calculus for principal part}, we utilize these properties to conclude the proof of Theorem \ref{theorem:Hcalc for Ah}. The proofs of Corollaries \ref{cor: Xah norm equiv Xa norm} and \ref{cor:discrete vs continuous real int} are presented at the end of Section \ref{sec: proof of calculus for principal part}.
\subsubsection{Properties of $A_h^\#$ and its Ritz projection} \label{sec:properties of Rh}
We recall that $X^q_0 \coloneq L^q$ and $X^q_{0,h}$ denotes $S_h$ endowed with the $L^q$-norm. Moreover, for $\alpha \in (0,1]$ we let $X^{q,\#}_{\alpha} \coloneq D((A_q^\#)^\alpha)$ endowed with the homogeneous graph norm, and define the discrete space $X^{q,\#}_{1,h} $ to be $S_h$ with the $\|A_h^\# \cdot\|_{L^q}$-norm.

Let $\mathfrak a^\#(u,v) = (a\nabla u, \nabla v)$ be the form associated with $A^\#=-\nabla \cdot a \nabla$ and let 
$R_h^\# \colon H^1_0 \to S_h$ be the Ritz projection associated with $A_h^\#$ given by 
$$ \mathfrak a^\#(u-R_h^\#u,v_h)=0, \quad u \in H^1_0, v_h \in S_h.$$
Note that $R_h^\#$ satisfies the identity 
$$R_h^{\#} = (A_h^\#)^{-1} P_h A_q^\# \text { on }  X^{q,\#}_{1}.$$
We will need the following stability and convergence properties of the Ritz projection:
\begin{lemma}\label{lemma:properties of Rh}
  Let $q \in (1,\infty)$ and suppose that Assumption \ref{main assumptions} holds for the principal part $A^\#$. Then there is a constant $C>0$, independent of $h$, such that the Ritz projection $R_h^\#$ satisfies the stability bound 
    \begin{equation}\label{ineq:stability of Rh on Xq1/2}
    \|R_h^\#\|_{\L(X^{q,\#}_{1/2})} \le C
\end{equation}
and the convergence estimate
\begin{align}
    \|I-R_h^\#\|_{\L(X^{q,\#}_1,X^q_0)}& \le C h^2 .\label{Convergence estimate for I-Rh}
\end{align}
\end{lemma}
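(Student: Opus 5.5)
The plan is to derive both estimates from the characterization \eqref{X12 characterization}, $X^{q,\#}_{1/2}=W^{1,q}_0$ with equivalent norms, together with Assumption \ref{main assumptions} and the approximation properties of $P_h$.

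\textbf{Stability.} Let $u\in X^{q,\#}_{1/2}=W^{1,q}_0$. Assumption \ref{main assumptions}(ii) applied to $R_h^\#$ gives $\|R_h^\# u\|_{W^{1,q}}\le C\|u\|_{W^{1,q}}$. Since $S_h\subset W^{1,q}_0$, we have $R_h^\# u\in W^{1,q}_0=X^{q,\#}_{1/2}$. Hence, using the norm equivalence \eqref{X12 characterization} twice,
\[
\|R_h^\#u\|_{X^{q,\#}_{1/2}}\lesssim \|R_h^\#u\|_{W^{1,q}}\lesssim\|u\|_{W^{1,q}}\lesssim\|u\|_{X^{q,\#}_{1/2}}.
\]
This proves \eqref{ineq:stability of Rh on Xq1/2}; nothing deeper is needed.

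\textbf{Convergence: duality step.} I will prove \eqref{Convergence estimate for I-Rh} by an Aubin--Nitsche argument in $L^q$. Let $u\in X^{q,\#}_1$ and set $e=u-R_h^\#u\in W^{1,q}_0$. For $\phi\in L^{q'}$, let $w=(A^\#_{q'})^{-1}\phi\in X^{q',\#}_1$. Since $A^\#$ is symmetric, $(e,\phi)_{L^2}=\mathfrak a^\#(e,w)$; this follows from the weak formulation on $W^{1,q}_0\times W^{1,q'}_0$, obtained by density from $H^1_0$. Galerkin orthogonality then gives
\[
(e,\phi)_{L^2}=\mathfrak a^\#(e,w-\chi_h)\le C\|\nabla e\|_{L^q}\,\|\nabla(w-\chi_h)\|_{L^{q'}}\qquad\text{for every }\chi_h\in S_h.
\]

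\textbf{Convergence: the two factors.} For the second factor, choose $\chi_h=\tilde I_h w$, the Zhang-type interpolant, and use \eqref{ineq:Ih estimate} at exponent $q'$. By Assumption \ref{main assumptions}(i) at $q'$ this gives a bound $Ch\|w\|_{X^{q',\#}_1}=Ch\|\phi\|_{L^{q'}}$. The interpolant estimate is stated only on $\O_h$, so one has to check that the gradient error is controlled on all of $\O$. On $\O\setminus\O_h$ the interpolant vanishes, and this is a thin boundary layer of width $O(h^2)$, where a boundary-layer/trace estimate gives $\|\nabla w\|_{L^{q'}(\O\setminus\O_h)}\lesssim h\|w\|_{W^{2,q'}}$.

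For the first factor, the stability just proved shows that $I-R_h^\#$ is bounded on $W^{1,q}_0$. Since $R_h^\#$ reproduces $S_h$,
\[
\|\nabla e\|_{L^q}=\|\nabla(I-R_h^\#)(u-\tilde I_h u)\|_{L^q}\lesssim\|u-\tilde I_hu\|_{W^{1,q}}\lesssim h\|u\|_{W^{2,q}}\eqsim h\|u\|_{X^{q,\#}_1},
\]
again handling $\O\setminus\O_h$ as above and using Assumption \ref{main assumptions}(i) at $q$.

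\textbf{Conclusion.} Taking the supremum over $\|\phi\|_{L^{q'}}\le1$ yields $\|e\|_{L^q}\le Ch^2\|u\|_{X^{q,\#}_1}$, which is \eqref{Convergence estimate for I-Rh}.

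The main obstacle is the $O(h)$ approximation of $\nabla w$ on the strip $\O\setminus\O_h$, which \eqref{ineq:Ih estimate} does not cover. My first attempt is the local estimate $\|v\|_{L^r(\text{strip})}\lesssim h\|v\|_{W^{1,r}}$ for strips of width $O(h^2)$, applied to $v=\nabla w$. Alternatively, I would look for a version of \eqref{ineq:Ih estimate} on all of $\O$ in \cite{Geissert-applications-of-DMR}.
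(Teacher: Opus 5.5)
Your stability argument is correct and is the same as the paper's. For the convergence estimate you run a single Aubin--Nitsche argument for all $q\in(1,\infty)$. The paper instead cites Geissert's argument for $q\ge2$ and obtains $q<2$ from the adjoint identity $[(I-R_h^\#)(A_q^\#)^{-1}]^*=(I-R_h^\#)(A_{q'}^\#)^{-1}$.

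\textbf{The gap.} Your version breaks down exactly at the step you flagged, and the proposed fix is false. For a strip of width $\delta\eqsim h^2$ the sharp bound is $\|v\|_{L^r(\O\setminus\O_h)}\lesssim\delta^{1/r}\|v\|_{W^{1,r}}\eqsim h^{2/r}\|v\|_{W^{1,r}}$; take $v$ constant near $\partial\O$ to see this. Since $h^{2/r}\gg h$ when $r>2$, your strip estimate fails there.

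No choice of interpolant helps. Every element of $S_h$ vanishes on $\O\setminus\O_h$, so $\inf_{\chi_h\in S_h}\|\nabla(w-\chi_h)\|_{L^r(\O)}\ge\|\nabla w\|_{L^r(\O\setminus\O_h)}\gtrsim h^{2/r}$ whenever $\partial_n w$ is bounded away from zero on $\partial\O$. An example is $a=I$, $\phi=1$ on a ball, where $|\O\setminus\O_h|\eqsim h^2$.

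The same obstruction hits your first factor. Since $e=u$ on the strip, $\|\nabla e\|_{L^q(\O)}\gtrsim h^{2/q}$ for such $u$ when $q>2$. Whenever $q\neq2$, one of $q,q'$ exceeds $2$. So bounding the two gradient factors separately on all of $\O$ gives only $h^{1+2/\max\{q,q'\}}$, not $h^2$. This is why \eqref{ineq:Ih estimate} and \eqref{Convergence estimate for I-Ph} control gradients only on $\O_h$.

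\textbf{The repair.} Two changes are needed.

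First, split $\mathfrak a^\#(e,w-\chi_h)$ into integrals over $\O_h$ and over $\O\setminus\O_h$, rather than applying H\"older on all of $\O$.
On the strip, $R_h^\#u=\chi_h=0$, so that piece equals $\int_{\O\setminus\O_h}a\nabla u\cdot\nabla w$. H\"older together with the strip bound at exponents $q$ and $q'$ gives $h^{2/q}h^{2/q'}=h^2$ times $\|u\|_{W^{2,q}}\|w\|_{W^{2,q'}}$.
On $\O_h$, \eqref{ineq:Ih estimate} handles $w$. Your stability argument gives $\|\nabla e\|_{L^q(\O)}\lesssim\|u-\tilde I_hu\|_{W^{1,q}(\O)}\lesssim(h+h^{2/q})\|u\|_{W^{2,q}}$, which is $O(h)$ only for $q\le2$.

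Second, for $q>2$, use the adjoint identity above to pass to $q'<2$.
Running the direct argument at $q'$ then requires $W^{1,q'}$-stability of $R_h^\#$. This is not literally Assumption \ref{main assumptions}(ii), but it follows from it by the duality argument of Remark \ref{remark:assumptions}(2). The needed $W^{-1,q'}$-regularity follows from $X^{r,\#}_{1/2}=W^{1,r}_0$ for $r\in\{q,q'\}$.

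So, with only the ingredients you use, the direct argument closes for $q\le2$. The duality step therefore runs from $q>2$ to $q'<2$, which is the reverse of how the paper organizes it; the paper delegates the direct case $q\ge2$ to Geissert.
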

\begin{proof}
    The stability \eqref{ineq:stability of Rh on Xq1/2} follows from Assumption \ref{main assumptions} and the fractional domain characterization $X^q_{1/2}=W^{1,q}_0$ given by \eqref{X12 characterization}. For $q \in [2,\infty)$, the convergence estimate \eqref{Convergence estimate for I-Rh} is proved using the stability bound \eqref{ineq:stability of Rh on Xq1/2} as in \cite[Lemma A.7]{Geissert-applications-of-DMR} (see also \cite[Lemma 4.5]{LiZhou2026}). It remains to prove  \eqref{Convergence estimate for I-Rh} for $q \in (1,2)$. By the definition of $R_h^\#$ and the symmetry of the form $\mathfrak a^\#$ it follows that 
    $$ [(I-R_h^\#) (A_q^\#)^{-1}]^*= (I-R_h^\#) (A_{q'}^\#)^{-1},$$ where $1/q+1/q'=1$. Hence,  
$$ \| I-R_h^\#\|_{\L(X^q_1,X^q_0)} = \| (I-R_h^\#) (A_q^\#)^{-1} \|_{\L(X^q_0)} = \| (I-R_h^\#) (A_{q'}^\#)^{-1} \|_{\L(X^{q'}_0)} = \| I-R_h^\#\|_{\L(X^{q'}_1,X^{q'}_0)}  .$$
 This implies  \eqref{Convergence estimate for I-Rh} for $q \in (1,2)$. 
\end{proof}
Using Lemma \ref{lemma:properties of Rh} we can establish the following properties of the projection $P_h$:
\begin{lemma}
    Let $q \in (1,\infty)$ and suppose that Assumption \ref{main assumptions} holds for the principal part $A^\#$. Then there is a constant $C>0$, independent of $h$, such that $P_h$ satisfies the  convergence estimate
\begin{equation}
    \|P_h-R_h^\#\|_{\L(X^{q,\#}_1,X^q_0)} \le C h^2.\label{Convergence estimate for Ph-Rh}
\end{equation}
Moreover, if $q \in [2,\infty)$ then $P_h$ satisfies the stability bound
\begin{equation}\label{ineq:stability of Ph on X12}
    \|P_h\|_{\L(X^{q,\#}_{1/2})} \le C
\end{equation}
and the convergence estimate 
\begin{align}
    \|P_h-R_h^\#\|_{\L(X^{q,\#}_{1/2},X^q_0)}& \le C h .\label{Convergence estimate for Ph-Rh on X1/2}
\end{align}
\end{lemma}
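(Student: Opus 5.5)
The three estimates will be derived in sequence from Lemma \ref{lemma:properties of Rh}, the convergence estimate \eqref{Convergence estimate for I-Ph}, the stability bound \eqref{stability of Ph on X0}, and the inverse estimates \eqref{ineq:inverse estimate for W spaces} and \eqref{ineq:inverse estimate for Xq spaces}.

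\emph{Estimate \eqref{Convergence estimate for Ph-Rh}.} For $u\in X^{q,\#}_1$, split $P_h-R_h^\# = (P_h-I)+(I-R_h^\#)$. The first term is bounded by $Ch^2\|u\|_{X^q_1}$ by \eqref{Convergence estimate for I-Ph}, using $X^q_1=X^{q,\#}_1$ from \eqref{eq:fraction space equals both domains}. The second term is bounded by $Ch^2\|u\|_{X^{q,\#}_1}$ by \eqref{Convergence estimate for I-Rh}. This works for every $q\in(1,\infty)$.

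\emph{Estimate \eqref{ineq:stability of Ph on X12}, for $q\ge2$.} For $u\in X^{q,\#}_{1/2}=W^{1,q}_0$ (see \eqref{X12 characterization}), write
\[
P_hu = R_h^\# u + P_h(u-R_h^\#u)
\]
(the relevant element is $(P_h - R_h^\#)u = P_h(I-R_h^\#)u$, since $P_hR_h^\#=R_h^\#$).
\begin{itemize}
\item The term $R_h^\#u$ is bounded in $X^{q,\#}_{1/2}$ by \eqref{ineq:stability of Rh on Xq1/2}.
\item For $P_h(u-R_h^\#u)\in S_h$, apply the inverse estimate \eqref{ineq:inverse estimate for Xq spaces} with $\alpha=1/2$, then \eqref{stability of Ph on X0}:
\[
\|P_h(I-R_h^\#)u\|_{X^q_{1/2}} \le Ch^{-1}\|(I-R_h^\#)u\|_{L^q}.
\]
\item It therefore suffices to prove the $L^q$ estimate $\|(I-R_h^\#)u\|_{L^q}\le Ch\|u\|_{W^{1,q}}$. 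This follows by complex interpolation between $\|I-R_h^\#\|_{\L(X^{q,\#}_1,X^q_0)}\le Ch^2$ and $\|I-R_h^\#\|_{\L(X^{q,\#}_0,X^q_0)}$. The catch is that $R_h^\#$ is not defined on $L^q$, so the second endpoint is unavailable.
\end{itemize}

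I would instead use the endpoint $\|I-R_h^\#\|_{\L(X^{q,\#}_{1/2},X^q_0)}\le C$, which holds by \eqref{ineq:stability of Rh on Xq1/2} and the embedding $X^{q,\#}_{1/2}\hookrightarrow X^q_0$. This endpoint alone does not give the factor $h$, and interpolating it against the $X^{q,\#}_1$ endpoint only yields $X^{q,\#}_{3/4}$, which is not what is needed. So I would instead run a duality (Aubin--Nitsche) argument. For $\phi\in L^{q'}$, set $w=(A^\#_{q'})^{-1}\phi$. Galerkin orthogonality, symmetry of $\mathfrak a^\#$ and Hölder give
\[
((I-R_h^\#)u,\phi) = \mathfrak a^\#((I-R_h^\#)u, w-R_h^\# w) \le C\|(I-R_h^\#)u\|_{W^{1,q}}\,\|(I-R_h^\#)w\|_{W^{1,q'}}.
\]
The first factor is at most $C\|u\|_{W^{1,q}}$ by Assumption \ref{main assumptions}(ii). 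For the second factor I need $\|(I-R_h^\#)w\|_{W^{1,q'}}\le Ch\|w\|_{W^{2,q'}}$. I would obtain this by inserting the interpolant $\tilde I_h$ from \eqref{ineq:Ih estimate}: write $(I-R_h^\#)w=(I-R_h^\#)(w-\tilde I_hw)$, and use $W^{1,q'}$-stability of $R_h^\#$. That stability holds because Assumption \ref{main assumptions}(ii) at $q$ transfers to $q'$ by the duality computation in Remark \ref{remark:assumptions}, using elliptic regularity at $q,q'$ from Assumption \ref{main assumptions}(i). Finally, $\|w\|_{W^{2,q'}}\le C\|\phi\|_{L^{q'}}$ by Assumption \ref{main assumptions}(i) at $q'$. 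Together these give $\|(I-R_h^\#)u\|_{L^q}\le Ch\|u\|_{W^{1,q}}$, and hence \eqref{ineq:stability of Ph on X12}.

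\emph{Estimate \eqref{Convergence estimate for Ph-Rh on X1/2}.} With the step above in hand, for $u\in W^{1,q}_0$ we have
\[
\|(P_h-R_h^\#)u\|_{L^q}=\|P_h(I-R_h^\#)u\|_{L^q}\le C\|(I-R_h^\#)u\|_{L^q}\le Ch\|u\|_{X^{q,\#}_{1/2}},
\]
using \eqref{stability of Ph on X0}.

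The main obstacle is the $O(h)$ estimate for $(I-R_h^\#)$ from $W^{1,q}$ to $L^q$. Once that is established, the other bounds reduce to it combined with the inverse estimate.

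It is unclear to me why the lemma restricts to $q\ge2$. Possibly the author intends instead to interpolate \eqref{Convergence estimate for Ph-Rh} with the Douglas-type $W^{1,q}$-stability of $P_h$, which is known to hold for $q\ge2$ on quasi-uniform meshes. If my duality argument stalls, that would be the alternative route.
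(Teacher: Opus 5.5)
Your proof is correct and follows essentially the paper's route. Estimate \eqref{Convergence estimate for Ph-Rh} comes from the same triangle inequality, and \eqref{ineq:stability of Ph on X12} from the same splitting $P_hu=R_h^\#u+(P_h-R_h^\#)u$ plus the inverse estimate. The remaining ingredient is the $O(h)$ bound for $P_h-R_h^\#$ from $X^{q,\#}_{1/2}$ to $L^q$. The paper establishes it first by citing \cite[Lemma A.7]{Geissert-applications-of-DMR}. You derive it yourself, via the duality argument for $I-R_h^\#$ together with the identity $P_h-R_h^\#=P_h(I-R_h^\#)$.

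One detail needs adding. The estimate \eqref{ineq:Ih estimate} controls $\nabla(w-\tilde I_hw)$ only on $\O_h$, so you also need $\|\nabla w\|_{L^{q'}(\O\setminus\O_h)}\le Ch^{2/q'}\|w\|_{W^{2,q'}}\le Ch\|w\|_{W^{2,q'}}$. This follows from the strip estimate applied to $\nabla w\in W^{1,q'}$, since $\O\setminus\O_h$ is a boundary layer of width $O(h^2)$ and $q'\le 2$; this is one place where $q\ge 2$ genuinely enters your argument. Separately, the $W^{1,q'}$-stability of $R_h^\#$ is unnecessary: Galerkin orthogonality gives $\mathfrak a^\#((I-R_h^\#)u,w)=\mathfrak a^\#((I-R_h^\#)u,w-\tilde I_hw)$ directly.
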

\begin{proof}
The convergence estimate \eqref{Convergence estimate for Ph-Rh} follows by combining the convergence estimates for $P_h$ and $R_h^\#$ given by   \eqref{Convergence estimate for I-Ph} and \eqref{Convergence estimate for I-Rh}, respectively. The estimate \eqref{Convergence estimate for Ph-Rh on X1/2} 
follows by the convergence estimate for $P_h$ given in  \eqref{Convergence estimate for I-Ph} and the stability of $R_h^\#$ given in \eqref{ineq:stability of Rh on Xq1/2}, see \cite[Lemma A.7]{Geissert-applications-of-DMR} for instance.

    The stability \eqref{ineq:stability of Ph on X12} follows from \eqref{ineq:stability of Rh on Xq1/2},  the convergence estimate \eqref{Convergence estimate for Ph-Rh on X1/2}, the inverse estimate \eqref{ineq:inverse estimate for Xq spaces} and the triangle inequality as follows 
    \begin{align*}
        \|P_h u\|_{X^{q,\#}_{1/2}} &\le \|P_h u - R_h^\# u\|_{X^{q,\#}_{1/2}} + \|R_h^\# u\|_{X^{q,\#}_{1/2}} \le C h^{-1}\|P_h u - R_h^\# u\|_{X^q_0} + C\|u\|_{X^{q,\#}_{1/2}} 
        \\
        & \le C\|u\|_{X^{q,\#}_{1/2}}.
    \end{align*}
\end{proof}
 We now collect some properties of the operator $A_h^\#$. 
\begin{lemma} Let $q \in (1,\infty)$. Then the operator $A_h^\#$ is bounded on $X^q_{0,h}$ and there is $C>0$, independent of $h$, such that 
    \begin{equation}\label{ineq: norm of Ah}
        \|A_h^\#\|_{\L(X^q_{0,h})} \le C h^{-2}.
    \end{equation}
    Moreover, if Assumption \ref{main assumptions} holds for the principal part $A^\#$, then there is $C>0$, independent of $h$, such that the following convergence and stability bounds hold:
\begin{align}\label{ineq: perturbation of inverses gamma=1}
    \|(A_h^\#)^{-1} P_h - (A_q^\#)^{-1} \|_{\L(X^q_0)} \le C h^{2}
\\
\label{ineq: stability of inverse of Ah}
    \|(A_h^\#)^{-1} \|_{\L(X^q_{0,h})} \le C.
\end{align}
\end{lemma}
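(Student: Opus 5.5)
The plan is to prove the three claims in order, each from the definition of $A_h^\#$ via the form $\mathfrak a^\#$ and the projection properties already established.

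\emph{Bound \eqref{ineq: norm of Ah}.} For $u_h \in S_h$, duality in $L^q$ gives
$$\|A_h^\# u_h\|_{L^q} = \sup_{v\in L^{q'},\|v\|\le 1} (A_h^\# u_h, P_h v)_{L^2}.$$
This holds because $A_h^\# u_h\in S_h$, so $(A_h^\# u_h,v)=(A_h^\# u_h,P_hv)$. By definition of $A_h^\#$,
$$(A_h^\# u_h,P_hv) = (a\nabla u_h,\nabla P_hv) \le \|a\|_\infty \|\nabla u_h\|_{L^q}\|\nabla P_h v\|_{L^{q'}}.$$
The inverse estimate \eqref{ineq:inverse estimate for W spaces}, applied in both $L^q$ and $L^{q'}$, together with the stability \eqref{stability of Ph on X0} of $P_h$ on $L^{q'}$, then yields the bound $Ch^{-2}\|u_h\|_{L^q}$.

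\emph{Estimate \eqref{ineq: perturbation of inverses gamma=1}.} The key identity is
$$(A_h^\#)^{-1}P_h = R_h^\#(A_q^\#)^{-1} \quad\text{on } L^q.$$
It follows from the formula $R_h^\# = (A_h^\#)^{-1}P_hA_q^\#$ on $X^{q,\#}_1$: for $f\in L^q$, set $u=(A_q^\#)^{-1}f\in X^{q,\#}_1$ and apply that formula to $u$. At first this is valid for $f\in L^2\cap L^q$, and it extends by density, since both sides are bounded operators on $L^q$ for fixed $h$ (finite dimensionality). Consequently
$$(A_h^\#)^{-1}P_h-(A_q^\#)^{-1} = -(I-R_h^\#)(A_q^\#)^{-1},$$
and \eqref{Convergence estimate for I-Rh} bounds its $\L(X^q_0)$ norm by $Ch^2\|(A_q^\#)^{-1}\|_{\L(X^q_0,X^{q,\#}_1)} = Ch^2$, the last norm being $1$ by the homogeneous graph norm.

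\emph{Stability \eqref{ineq: stability of inverse of Ah}.} For $f_h\in S_h$ we have $P_hf_h=f_h$, so
$$\|(A_h^\#)^{-1}f_h\|_{L^q} \le \|(A_q^\#)^{-1}f_h\|_{L^q} + Ch^2\|f_h\|_{L^q} \le C\|f_h\|_{L^q}.$$
Here we use the invertibility of $A_q^\#$ and that $h$ is bounded, by the diameter of $\O$.

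The only delicate point is justifying the identity $(A_h^\#)^{-1}P_h = R_h^\#(A_q^\#)^{-1}$. This requires checking that $R_h^\#$ is well defined on $X^{q,\#}_1$ when $q<2$ (where $X^{q,\#}_1\not\subset H^1_0$ a priori). It is handled by working first with $f\in L^2\cap L^q$, so that $u\in H^1_0$, and then extending by density using the bound \eqref{Convergence estimate for I-Rh}, which is stated on $X^{q,\#}_1$.
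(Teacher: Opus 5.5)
Your proposal is correct and follows the paper's proof essentially step for step. The three steps are the same: duality against $P_hv$ with the inverse estimate and $L^{q'}$-stability of $P_h$ for the bound $Ch^{-2}$; the identity $(A_h^\#)^{-1}P_h-(A_q^\#)^{-1}=(R_h^\#-I)(A_q^\#)^{-1}$ combined with \eqref{Convergence estimate for I-Rh}; and the triangle inequality for the uniform bound on $(A_h^\#)^{-1}$.

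There are two minor differences. You apply the gradient inverse estimate \eqref{ineq:inverse estimate for W spaces} directly, rather than going through $X^{q,\#}_{1/2}$-norms as the paper does; this keeps the first claim free of Assumption \ref{main assumptions}, which matches its unconditional statement. Your density argument for $q<2$ is harmless but unnecessary, since $R_h^\#u$ is already well defined for $u\in W^{1,1}_0$ because $\nabla v_h\in L^\infty$.
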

\begin{proof}
    In order to prove \eqref{ineq: norm of Ah}, it suffices to show that 
	\begin{equation}\label{norm of AhPh}
		\|A_h^\# P_h u\|_{X^q_0} \le C h^{-2} \|u\|_{X^q_0}, \quad u \in X^q_0,
	\end{equation}
    where $C>0$ does not depend on $h$. We prove \eqref{norm of AhPh} by a duality argument. Let $1/q+1/q'=1$. For $v \in X^{q'}_0$, the definition of $P_h$, the stability estimate \eqref{stability of Ph on X0} and the inverse estimate \eqref{ineq:inverse estimate for Xq spaces} imply
\begin{align*}
		|(A_h^\# P_h u , v)_{L^2}| & =|(A_h^\# P_h u , P_hv)_{L^2} |
		\le C \| P_h u\|_{W^{1,q}} \|P_hv\|_{W^{1,q'}}  \le C   \| P_h u\|_{X^{q,\#}_{1/2}} \|P_hv\|_{X^{q',\#}_{1/2}} 
        \\
        &\le C (h^{-1} \|P_h u\|_{X^q_0}) (h^{-1} \|P_h v\|_{X^{q'}_0})
        \le C h^{-2} \|u\|_{X^q_0} \|v\|_{X^{q'}_0}. 
\end{align*}
The convergence estimate \eqref{ineq: perturbation of inverses gamma=1} follows from the identity $(A_h^\#)^{-1}P_h-(A_q^\#)^{-1}=(R_h^\#-I)(A_q^\#)^{-1}$ and \eqref{Convergence estimate for I-Rh}. Finally, the stability bound \eqref{ineq: stability of inverse of Ah} follows from  \eqref{ineq: perturbation of inverses gamma=1} and the triangle inequality.
\end{proof}
We now establish the following stability properties of $P_h$, which can also be found in \cite[Lemma 4.7]{Geissert-applications-of-DMR}.
\begin{lemma}
    Let $q \in (1,\infty)$ and suppose that Assumption \ref{main assumptions} holds for the principal part $A^\#$. Then $P_h$ is stable from $ X^q_0$ to $X^q_{0,h}$ and from $ X^{q,\#}_1$ to $X^{q,\#}_{1,h}$, namely, there is $C>0$ independent of $h$ such that 
    \begin{align}
\|P_h\|_{\L(X^{q,\#}_0,X^q_{0,h})} &\le C, \label{ineq:stability of Ph from Xq0 to Xq0h}
\\
\|P_h\|_{\L(X^{q,\#}_1 , X^{q,\#}_{1,h})} &\le C.
\label{Ph stable X1 to X1h} 
\end{align}
\end{lemma}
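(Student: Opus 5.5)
The first bound \eqref{ineq:stability of Ph from Xq0 to Xq0h} is immediate. Since $X^{q,\#}_0 = X^q_0 = L^q$ and $X^q_{0,h}$ is $S_h$ with the $L^q$-norm, it is just a restatement of the $L^q$-stability \eqref{stability of Ph on X0} of the $L^2$-projection. The work is in \eqref{Ph stable X1 to X1h}: we must show $\|A_h^\# P_h u\|_{L^q} \le C\|A_q^\# u\|_{L^q}$ for $u \in X^{q,\#}_1$, uniformly in $h$.

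The plan is to pass through the Ritz projection. The identity $R_h^\# = (A_h^\#)^{-1}P_hA_q^\#$ on $X^{q,\#}_1$ gives $A_h^\# R_h^\# u = P_h A_q^\# u$, so
\begin{equation*}
A_h^\# P_h u = P_h A_q^\# u + A_h^\#(P_h - R_h^\#)u .
\end{equation*}
The first term is bounded by $C\|A_q^\# u\|_{L^q} = C\|u\|_{X^{q,\#}_1}$, again by \eqref{stability of Ph on X0}.

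For the second term, note that $(P_h - R_h^\#)u \in S_h$. We therefore apply the operator norm bound \eqref{ineq: norm of Ah}, $\|A_h^\#\|_{\L(X^q_{0,h})}\le Ch^{-2}$, followed by the convergence estimate \eqref{Convergence estimate for Ph-Rh}, $\|P_h-R_h^\#\|_{\L(X^{q,\#}_1,X^q_0)}\le Ch^2$. This gives
\begin{equation*}
\|A_h^\#(P_h-R_h^\#)u\|_{L^q} \le Ch^{-2}\cdot Ch^{2}\|u\|_{X^{q,\#}_1} .
\end{equation*}
The powers of $h$ cancel exactly, and summing the two terms yields \eqref{Ph stable X1 to X1h} for every $q\in(1,\infty)$.

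The only delicate point is the convergence estimate \eqref{Convergence estimate for Ph-Rh}. It must hold for all $q\in(1,\infty)$, not only $q\ge 2$, and it has already been established via the duality argument in Lemma \ref{lemma:properties of Rh} combined with \eqref{Convergence estimate for I-Ph}. With that in hand I expect no further obstacle: the inverse-type bound \eqref{ineq: norm of Ah} is available for all $q$, and the two rates match.
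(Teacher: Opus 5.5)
Your proof is correct and follows the paper's argument essentially verbatim: the first bound is just \eqref{stability of Ph on X0}, and for the second you split via $A_h^\# R_h^\# = P_h A_q^\#$, use stability of $P_h$ on the first term and the $h^{-2}\cdot h^{2}$ cancellation from the inverse-type bound and \eqref{Convergence estimate for Ph-Rh} on the second. The paper cites \eqref{norm of AhPh} where you cite \eqref{ineq: norm of Ah}; this makes no difference because $(P_h-R_h^\#)u\in S_h$.
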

\begin{proof}

Note that \eqref{ineq:stability of Ph from Xq0 to Xq0h} follows from \eqref{stability of Ph on X0} and the norm identity $\|u_h\|_{X^q_0}=\|u_h\|_{X^q_{0,h}}$. By the triangle inequality, \eqref{Ph stable X1 to X1h} will follow once we prove 
$$\| A_h^\# (P_h-R_h^\#) u \|_{X^q_{0,h}} + \|A_h^\# R_h^\#u\|_{X^q_{0,h}} \le C \|u\|_{X^{q,\#}_1} , \quad u \in X^{q,\#}_1.$$
By the identity $A_h^\# R_h^\# = P_h A_q^\#$ and the stability of $P_h$, we get
$$\| A_h^\# R_h^\# u \|_{X^q_{0,h}} = \| P_h A_q^\# u \|_{X^q_{0,h}} =\| P_h A_q^\# u \|_{X^q_{0}}  \le  C \|A_q^\#u\|_{X^q_0} = C\|u\|_{X^{q,\#}_1},$$
whereas \eqref{norm of AhPh} and the convergence estimate \eqref{Convergence estimate for Ph-Rh} give
$$\| A_h^\# (P_h-R_h^\#) u \|_{X^q_0} \le C h^{-2} \|P_hu-R_h^\# u\|_{X^q_0} \le C \|u\|_{X^{q,\#}_1}.$$
\end{proof}
Finally, we prove that $A_h^\#$ is a sectorial operator on $X^q_{0,h}$ of angle zero, uniformly in the mesh size $h$. 
\begin{lemma}\label{lemma:sectoriality of Ah}
Let $q \in (1,\infty)$ and $\nu \in (0,\pi/2)$, and suppose that Assumption \ref{main assumptions} holds for the principal part $A^\#$. Then there exist constants $C > 0$ and $\delta > 0$, independent of $h$, such that $\sigma(A_h^\#) \subset [\delta, C h^{-2}]$ and 
\begin{equation}\label{ineq:sectoriality of Ah}
\| R(\lambda,A_h^\#) \|_{\L(X^q_{0,h})} \le \frac{C}{1+|\lambda|}, \quad \lambda \in \mathbb C \setminus \Sigma_\nu.
\end{equation}
\end{lemma}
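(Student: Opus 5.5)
The argument splits into a spectral statement, a resolvent bound for $|\lambda|\lesssim 1$, and a resolvent bound for $|\lambda|\gtrsim 1$ obtained by comparison with $A_q^\#$.

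\emph{Spectrum.} $A_h^\#$ is self-adjoint and positive with respect to $(\cdot,\cdot)_{L^2}$ on the finite-dimensional space $S_h$, since $\mathfrak a^\#$ is symmetric and coercive. Hence $\sigma(A_h^\#)\subset(0,\infty)$ consists of real eigenvalues. The spectral radius is at most $\|A_h^\#\|_{\L(X^q_{0,h})}\le Ch^{-2}$ by \eqref{ineq: norm of Ah}. The smallest eigenvalue is at least $\delta:=\|(A_h^\#)^{-1}\|_{\L(X^q_{0,h})}^{-1}\ge C^{-1}$ by \eqref{ineq: stability of inverse of Ah}. Both bounds are independent of $h$, which gives $\sigma(A_h^\#)\subset[\delta,Ch^{-2}]$.

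\emph{Small $\lambda$.} Take $\lambda\in\C\setminus\Sigma_\nu$ with $|\lambda|\le \lambda_1$, where $\lambda_1$ is fixed below. Write
\[
R(\lambda,A_h^\#)=-(A_h^\#)^{-1}\bigl(I-\lambda (A_h^\#)^{-1}\bigr)^{-1}.
\]
If $\lambda_1 C<1/2$, a Neumann series and \eqref{ineq: stability of inverse of Ah} give a uniform bound $C\le C'/(1+|\lambda|)$. One has to make sure $\lambda_1$ is not so small that the large-$\lambda$ step fails; alternatively, treat a compact annulus by the same perturbation comparison used there.

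\emph{Large $\lambda$: perturbation from the continuous resolvent.} For $u_h\in S_h$ I compare $R(\lambda,A_h^\#)u_h$ with $P_hR(\lambda,A_q^\#)u_h$. The starting identity is
\[
(\lambda-A_h^\#)P_hR(\lambda,A_q^\#)u_h = u_h + \bigl(A_h^\#R_h^\#-A_h^\#P_h\bigr)R(\lambda,A_q^\#)u_h ,
\]
where $A_h^\#R_h^\#=P_hA_q^\#$ and $P_h u_h=u_h$ are used. This yields
\[
R(\lambda,A_h^\#)u_h=P_hR(\lambda,A_q^\#)u_h - R(\lambda,A_h^\#)A_h^\#(R_h^\#-P_h)R(\lambda,A_q^\#)u_h .
\]
Set $E(\lambda):=A_h^\#(R_h^\#-P_h)R(\lambda,A_q^\#)$. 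By \eqref{norm of AhPh} and \eqref{Convergence estimate for Ph-Rh}, $\|A_h^\#(R_h^\#-P_h)\|_{\L(X^{q,\#}_1,X^q_0)}\le C$. So by \eqref{sectoriality of Aq},
\[
\|E(\lambda)\|\le Ch^{-2}\min\{h^2,\ \|(R_h^\#-P_h)R(\lambda,A_q^\#)\|_{\L(X_0^q)}\}.
\]
Using the $L^q$-stability of $P_h,R_h^\#$ on $X^{q,\#}_{1/2}$ together with interpolation \eqref{sectoriality of Aq interpolated}, $\|(R_h^\#-P_h)R(\lambda,A_q^\#)\|_{\L(X^q_0)}\lesssim \min\{h^2,h^{2\theta}|\lambda|^{-1+\theta}\}$.

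\emph{Main obstacle.} These bounds do not directly make $\|E(\lambda)\|$ small. The factor $R(\lambda,A_h^\#)E(\lambda)$ must be handled so that the remainder is absorbed rather than merely estimated. I would write $A_h^\#R(\lambda,A_h^\#)=\lambda R(\lambda,A_h^\#)-I$. The remainder then becomes
\[
(\lambda R(\lambda,A_h^\#)-I)(R_h^\#-P_h)R(\lambda,A_q^\#)u_h ,
\]
whose norm is at most $(1+|\lambda|\,\|R(\lambda,A_h^\#)\|)\cdot Ch^{2}\cdot\min\{1,h^{-2}|\lambda|^{-1}\}$, using \eqref{Convergence estimate for Ph-Rh} and \eqref{sectoriality of Aq}. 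Let $M(\lambda):=(1+|\lambda|)\|R(\lambda,A_h^\#)\|$. This gives
\[
M(\lambda)\le C+C\bigl(1+M(\lambda)\bigr)\min\{h^2|\lambda|,1\}.
\]
This closes, with $M\le C$, when $h^2|\lambda|\le \epsilon$ for $\epsilon$ small.

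For $|\lambda|\ge \epsilon h^{-2}$ on the ray $\C\setminus\Sigma_\nu$ I would instead use the spectral picture. With $\epsilon$ small compared to the spectral radius bound this region is still not automatically away from the spectrum in the $L^q$ norm. So I expect the real difficulty here: uniform $L^q$ resolvent bounds at high frequency $|\lambda|\gtrsim h^{-2}$. My first attempt is a discrete duality/ interpolation argument. Since $A_h^\#$ is $L^2$-self-adjoint, $\|R(\lambda,A_h^\#)\|_{\L(L^2)}\le 1/\mathrm{dist}(\lambda,\R_+)\lesssim_\nu 1/|\lambda|$. At such frequencies, $\lambda R(\lambda,A_h^\#)=I+A_h^\#R(\lambda,A_h^\#)$ is a local operator at scale $h$. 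I would exploit $\|A_h^\#\|\le Ch^{-2}$ and a Neumann series in $A_h^\#/\lambda$ when $|\lambda|\ge 2Ch^{-2}$. For the remaining band $\epsilon h^{-2}\le|\lambda|\le 2Ch^{-2}$ I would use the known discrete analyticity via weighted $L^2$ (Schatz–Thomée–Wahlbin type) estimates, or fall back on citing Geissert's results on discrete maximal regularity.
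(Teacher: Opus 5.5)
\textbf{Verdict: the proposal is correct for $|\lambda|\le\epsilon h^{-2}$ and for $|\lambda|\ge 2Ch^{-2}$, but it gives no argument for the band in between, which is where the lemma has real content.}

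\textbf{The regime $|\lambda|\le \epsilon h^{-2}$.} Your argument here is correct, and it is a genuinely different and simpler route than the paper's. You insert $A_h^\#R(\lambda,A_h^\#)=\lambda R(\lambda,A_h^\#)-I$ into the identity
$R(\lambda,A_h^\#)u_h=P_hR(\lambda,A_q^\#)u_h+A_h^\#R(\lambda,A_h^\#)(P_h-R_h^\#)R(\lambda,A_q^\#)u_h$.
You then use only $\|P_h-R_h^\#\|_{\L(X^{q,\#}_1,X^q_0)}\le Ch^2$ and $\|R(\lambda,A_q^\#)\|_{\L(X^q_0,X^{q,\#}_1)}\le C$. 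This gives
\begin{equation*}
\|R(\lambda,A_h^\#)\|_{\L(X^q_{0,h})}\le \frac{C}{1+|\lambda|}+Ch^2\bigl(1+|\lambda|\,\|R(\lambda,A_h^\#)\|_{\L(X^q_{0,h})}\bigr),
\end{equation*}
which absorbs as soon as $Ch^2|\lambda|\le 1/2$. In that range $h^2\lesssim(1+|\lambda|)^{-1}$, because $h\le\mathrm{diam}\,\O$, so your separate Neumann step for small $\lambda$ is optional.

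The paper instead proves the auxiliary bound $\|A_h^\#R(\lambda,A_h^\#)\|_{\L(X^q_{1/2}\cap S_h,X^q_{0,h})}\lesssim(1+|\lambda|)^{-1/2}$. It does so by absorption with the interpolated estimate $\|P_h-R_h^\#\|_{\L(X^q_{3/4},X^q_{1/2})}\lesssim h^{1/2}$. That needs the $W^{1,q}$-stability of $P_h$, hence $q\ge 2$ plus a duality step. Your version works for every $q\in(1,\infty)$ directly.

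Your side bound $\|(R_h^\#-P_h)R(\lambda,A_q^\#)\|_{\L(X^q_0)}\lesssim|\lambda|^{-1}$ is unjustified, since $R_h^\#$ is not bounded on $L^q$, but you never need it. The Neumann series for $|\lambda|\ge 2\|A_h^\#\|_{\L(X^q_{0,h})}$ is also fine.

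\textbf{The gap: the band $\epsilon h^{-2}\le|\lambda|\le 2Ch^{-2}$, $\lambda\notin\Sigma_\nu$.} You give no argument here.
\begin{itemize}
\item Distance to the spectrum is not the problem: $\mathrm{dist}(\lambda,\sigma(A_h^\#))\ge|\lambda|\sin\nu$ there, and $\|R(\lambda,A_h^\#)\|_{\L(X^2_{0,h})}\le(|\lambda|\sin\nu)^{-1}$ is free by self-adjointness.
\item The problem is that $A_h^\#$ is not normal on $X^q_{0,h}$ for $q\ne 2$. Transferring the $L^2$ bound by inverse estimates, or by interpolation with $L^2$ alone, costs a factor $h^{-3|1/2-1/q|}$. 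So your ``discrete duality/interpolation'' first attempt cannot close; an endpoint ($L^1$ or $L^\infty$) estimate is needed.
\item The paper gets that endpoint via the discrete Green's function $G_h^x(\cdot,\bar\lambda)=R(\bar\lambda,A_h^\#)\delta_h^x$, which is symmetric in $x,y$. Schur's test then reduces the $L^q$ bound, for all $q$ at once, to the uniform estimate $\|G_h^x(\cdot,\lambda)\|_{L^1}\le C|\lambda|^{-1}$ for $|\lambda|\ge\omega_0h^{-2}$.
\item That $L^1$ estimate comes from weighted (localized) $L^2$ energy estimates, as in Bakaev--Thom\'ee. These make rigorous your heuristic that the resolvent is local at scale $h$ at these frequencies.
\end{itemize}
Your mention of weighted $L^2$ estimates points in this direction but omits the Green's-function/Schur reduction that makes them usable.

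Your fallback of citing Geissert would not prove the lemma as stated either. The paper notes that his kernel-estimate approach typically requires $C^{2,\alpha}$ coefficients, which is more than Assumption~\ref{main assumptions} provides.
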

The proof of Lemma \ref{lemma:sectoriality of Ah} follows the arguments presented in \cite{Bakaev-Thomee-maximum-norm}. While the authors in \cite{Bakaev-Thomee-maximum-norm} establish \eqref{ineq:sectoriality of Ah} for $q=\infty$, their approach requires higher regularity of the coefficients (see Remark \ref{remark:end point sectoriality} below). To bypass this, we tailor their arguments to the setting $q \in (1, \infty)$.
\begin{proof}
We begin by showing that the spectrum of $A_h^\#$ is contained in $[\delta, Ch^{-2}]$ for some positive constants $\delta$ and $C$ independent of $h$. Because $X^q_{0,h}$ is finite-dimensional, this spectrum is independent of the parameter $q \in (1, \infty)$. Consequently, it suffices to restrict our attention to the case $q=2$. Note that $A_h^\#$ is a positive and symmetric operator on $X^2_{0,h}$ with its smallest eigenvalue bounded below by some constant $\delta$, uniformly in $h>0$. Moreover, by \eqref{norm of AhPh}, $\|A_h^\#\|_{\L(X^2_{0,h})}\le C h^{-2}$. Therefore,  $\sigma(A_h^\#) \subset [\delta, Ch^{-2}]$. 

Using a duality argument, it is clear that it suffices to prove \eqref{ineq:sectoriality of Ah} for $q \in [2,\infty)$. 
We choose the range $q \in [2,\infty)$ in order to utilize the estimates \eqref{ineq:stability of Ph on X12} and \eqref{Convergence estimate for Ph-Rh on X1/2}. Let $\nu \in (0,\pi/2)$ be fixed but arbitrary.  We begin by establishing the following: there exists  
$\omega_0 >0 $ such that 
	\begin{equation}\label{important claim1}
		\|A_h^\# R(\lambda,A_h^\#) u_h\|_{X^q_{0,h}} \le C (1+|\lambda|)^{-1/2} \|u_h\|_{X^q_{1/2}},\quad |\lambda| \le \omega_0 h^{-2}, \, \lambda \in \C \setminus \Sigma_\nu, \, u_h \in X^q_{0,h}.
	\end{equation}
Let us first show that \eqref{important claim1} implies \eqref{ineq:sectoriality of Ah} for $|\lambda| \le \omega_0 h^{-2}$. Since $P_h A_q^\# = A_h^\# R_h^\#$, we can write 
	\begin{equation}\label{eq:R(lambda,Ah) identity with R_h 1}
		 R(\lambda,A_h^\#)u_h = P_h R(\lambda,A_q^\#) u_h + A_h^\#R(\lambda,A_h^\#) (P_h-R_h^\#) R(\lambda,A_q^\#)u_h
	\end{equation}
	and consequently, 
	\begin{align*}
		\|R(\lambda,A_h^\#)u_h\|_{X^q_{0,h}} & \le \| P_h R(\lambda,A_q^\#) u_h\|_{X^q_{0,h}} + \| A_h^\#R(\lambda,A_h^\#) (P_h-R_h^\#) R(\lambda,A_q^\#)u_h\|_{X^q_{0,h}} 
		\\
		& \stackrel{\mathrm{(i)}}{\le} C (1+|\lambda|)^{-1} \|u_h\|_{X^q_{0,h}}  + C (1+|\lambda|)^{-1/2} \| (P_h-R_h^\#) R(\lambda,A_q^\#)u_h\|_{X^q_{1/2}}
		\\
		& \stackrel{\mathrm{(ii)}}{\le}  C (1+|\lambda|)^{-1} \|u_h\|_{X^q_{0,h}}  + C (1+|\lambda|)^{-1/2} \| R(\lambda,A_q^\#)u_h\|_{X^q_{1/2}}
		\\
		& \stackrel{\mathrm{(iii)}}{\le}  C (1+|\lambda|)^{-1} \|u_h\|_{X^q_{0,h}},
	\end{align*}
    where (i) follows from \eqref{stability of Ph on X0}, \eqref{sectoriality of Aq} and \eqref{important claim1}, (ii) follows from  \eqref{ineq:stability of Rh on Xq1/2} and \eqref{ineq:stability of Ph on X12}, and (iii) follows from \eqref{sectoriality of Aq interpolated}. 
    
    We now prove \eqref{important claim1}. 
    It suffices to show that 
    $$\|A_h^\# R(\lambda,A_h^\#)\|_{\L(X^q_{1/2}\cap S_h,X^q_{0,h})} \coloneq \sup_{u_h \in S_h \setminus \{0\}} \frac{\|A_h^\# R(\lambda,A_h^\#)u_h\|_{X^q_{0,h}}}{\|u_h\|_{X^q_{1/2}}} \le C (1+|\lambda|)^{-1/2}.$$    
    We do so via an absorption argument.
    By the resolvent identity $\lambda R(\lambda,S) = I + SR(\lambda, S)$ and \eqref{eq:R(lambda,Ah) identity with R_h 1}, we write 
	\begin{equation}\label{eq:R(lambda,Ah) second identity with R_h 1}
		 A_h^\#R(\lambda,A_h^\#)u_h = P_h A_q^\# R(\lambda,A_q^\#) u_h + \lambda A_h^\# R(\lambda,A_h^\#) (P_h-R_h^\#) R(\lambda,A_q^\#)u_h.
	\end{equation}
	By the stability of $P_h$ and \eqref{sectoriality of Aq interpolated}, we estimate 
	\begin{equation}\label{ineq:max norm claim proof estimate 1 1}
		 \| P_h A_q^\#R(\lambda,A_q^\#) u_h\|_{X^q_{0,h}} \le C (1+|\lambda|)^{-1/2} \|u_h\|_{X^q_{1/2}} , \quad \lambda \in \C \setminus \Sigma_\nu.
	\end{equation}
    To estimate the second term of \eqref{eq:R(lambda,Ah) second identity with R_h 1} we recall some standard inequalities.     
     By \eqref{Convergence estimate for Ph-Rh} and the inverse estimate \eqref{ineq:inverse estimate for Xq spaces} we get  
   $$
       \|P_h-R_h^\#\|_{\L(X^q_1,X^q_{1/2})} \le C h^{-1}     \|P_h-R_h^\#\|_{\L(X^q_1,X^q_0)} \le C h .
    $$ Moreover, by \eqref{ineq:stability of Ph on X12} and \eqref{ineq:stability of Rh on Xq1/2} we get  
    $\|P_h-R_h^\#\|_{\L(X^q_{1/2})}\le C$. Interpolating between these two inequalities we obtain 
    \begin{equation}\label{interpolated Ph-Rh}
        \|P_h-R_h^\#\|_{\L(X^q_{3/4},X^q_{1/2})}\le C h^{1/2}.
    \end{equation}
    We now estimate the second term of \eqref{eq:R(lambda,Ah) second identity with R_h 1} as follows 
	\begin{align*}
		&\| \lambda A_h^\# R(\lambda,A_h^\#) (P_h-R_h^\#) R(\lambda,A_q^\#)u_h \|_{X^q_{0,h}} 
        \\
        & \le |\lambda| \|A_h^\# R(\lambda,A_h^\#)\|_{\L(X^q_{1/2}\cap S_h,X^q_{0,h})} \|(P_h-R_h^\#)R(\lambda,A_q^\#) u_h\|_{X^q_{1/2}} 
		\\
		&  \stackrel{\mathrm{(i)}}{\le} C |\lambda| \|A_h^\# R(\lambda,A_h^\#)\|_{\L(X^q_{1/2}\cap S_h,X^q_{0,h})}  h^{1/2} \|R(\lambda,A_q^\#)u_h\|_{X^q_{3/4}}
		\\
		&  \stackrel{\mathrm{(ii)}}{\le}C |\lambda| \|A_h^\# R(\lambda,A_h^\#)\|_{\L(X^q_{1/2}\cap S_h,X^q_{0,h})} h^{1/2} (1+|\lambda|)^{-3/4} \|u_h\|_{X^q_{1/2}}
		\\
		& \le  C   \|A_h^\# R(\lambda,A_h^\#)\|_{\L(X^q_{1/2}\cap S_h,X^q_{0,h})} h^{1/2} (1+|\lambda|)^{1/4} \|u_h\|_{X^q_{1/2}},
	\end{align*}
    where in (i) we used \eqref{interpolated Ph-Rh} and in (ii) we used \eqref{sectoriality of Aq interpolated}. 
	Let $\omega_0>0$ such that $C h^{1/2} (1+|\lambda|)^{1/4} \le 1/2  $ when $|\lambda| \le \omega_0 h^{-2}$. Then the calculations above imply 
	\begin{equation}\label{ineq:max norm claim proof estimate 2 1}
			\| \lambda A_h^\# R(\lambda,A_h^\#) (P_h-R_h^\#) R(\lambda,A_q^\#)u_h \|_{X^q_{0,h}} \le \tfrac 12   \|A_h^\# R(\lambda,A_h^\#)\|_{\L(X^q_{1/2}\cap S_h,X^q_{0,h})} \|u_h\|_{X^q_{1/2,h}}.
	\end{equation}
    By \eqref{eq:R(lambda,Ah) second identity with R_h 1}, \eqref{ineq:max norm claim proof estimate 1 1} and \eqref{ineq:max norm claim proof estimate 2 1},
	we get that 
	$$\|A_h^\# R(\lambda,A_h^\#)\|_{\L(X^q_{1/2}\cap S_h,X^q_{0,h})} \le C(1+|\lambda|)^{-1/2} + \tfrac 12 \|A_h^\# R(\lambda,A_h^\#)\|_{\L(X^q_{1/2}\cap S_h,X^q_{0,h})}  ,$$
	proving \eqref{important claim1}.

    We now turn to the case $|\lambda|\ge \omega_0 h^{-2}$. 
    For $x \in \O_h$ let $G_h^x (\cdot,\bar \lambda)$ denote the adjoint discrete Green's function given by
	$$G_h^x (y,\bar \lambda) = (R(\bar \lambda,A_h^\#) \delta_h^x)(y), \quad \lambda \notin \Sigma_\nu,$$
	where $\delta_h^x$ is the discrete delta-function given by 
	$$(u_h,\delta_h^x )_{L^2} = u_h(x), \quad u_h\in S_h.$$ 
	We then have 
	$$(R(\lambda,A_h^\#) u_h)(x) = (u_h, G_h^x(\cdot,\bar \lambda))_{L^2}, \quad u_h \in S_h.$$
	Therefore, by Schur's lemma (cf.\,\cite[Appendix A.1]{GrafModern})  and noting that $G_h$ is symmetric in $x,y$, it suffices to show that there is $C>0$ such that for all $x \in \O_h$,
	\begin{equation}\label{ineq:L1 estimate of Ghx1}
		\|G_h^x(\cdot, \lambda)\|_{L^1} \le C |\lambda|^{-1}, \quad  |\lambda| \ge \omega_0 h^{-2}, \lambda \notin \Sigma_\nu.
	\end{equation}
    The proof of \eqref{ineq:L1 estimate of Ghx1} is similar to \cite[Section 3]{Bakaev-Thomee-maximum-norm} using weighted $L^2$ estimates and is thus omitted. 
\end{proof}

\begin{remark}\label{remark:end point sectoriality}
One can prove the endpoint cases $q \in \{1,\infty\}$ under the extra regularity assumption of $a_{ij}\in  C^{1,1/2}$ and $ \O\in  C^{2,1/2}$. Indeed, the case $q=\infty$ is proved as in  \cite{Bakaev-Thomee-maximum-norm} using Schauder estimates for $A^\#$ and the case $q=1$ follows by duality.        
\end{remark}

\subsubsection{Proof of Theorem \ref{theorem:Hcalc for Ah} and its corollaries} \label{sec: proof of calculus for principal part}
We now set the stage for the proof of Theorem \ref{theorem:Hcalc for Ah}. We follow the perturbation argument of Li and Zhou \cite{LiZhou2026}. By appealing directly to standard results from \cite{Haase:2}, which are re-derived in \cite[Section 2.1, Step 1]{LiZhou2026}, we are able to present a more concise proof. Since $A_h^\# P_h$ is bounded on $X^q_0$ with norm $\|A_h^\# P_h\|_{\L (X^q_0)} \le C h^{-2}$ (see \eqref{norm of AhPh}), while $A_q^\#$ is not, it is preferable to employ a perturbation argument using an operator that exhibits the same scaling as $A_h^\#$. That operator is the sectorial approximation $J_{h} \in \L(X^q_0)$ of $A_q^\#$ given by 
$$ J_h\coloneq (h^2+A_q^\#)(1+h^2 A_q^\#)^{-1} .$$
To see why $J_h$ exhibits the same scaling as $A_h^\# P_h$, using the sectoriality of $A_q^\#$, we estimate 
\begin{equation}\label{norm of Aqh}
\begin{aligned}
		\|J_h\|_{\L(X^q_0)}& \le h^2 \|(1+h^2A_q^\#)^{-1}\|_{\L(X^q_0)} + \|A_q^\#(1+h^2A_q^\#)^{-1}\|_{\L(X^q_0)}
		\\
		& = \|(h^{-2}+A_q^\#)^{-1}\|_{\L(X^q_0)} + h^{-2} \|A_q^\#(h^{-2}+A_q^\#)^{-1}\|_{\L(X^q_0)} \le C h^{-2}.
\end{aligned}
\end{equation}
Recall that $A_q^\#$ admits a bounded $H^\infty$-calculus on $X^q_0$ of angle zero (see Section \ref{section:The operator Aq}). By \cite[Proposition 2.1.3]{Haase:2}, $J_h$ is a sectorial operator on $X^q_0$ with angle zero and by \cite[Proposition 5.3.4 and Lemma 5.4.5]{Haase:2}, $J_h$ has a bounded $H^\infty$-calculus of angle zero. Moreover, the $H^\infty$-calculus constant of $J_h$ is equal to that of $A_q^\#$ and is thus independent of $h>0$. Hence, there is $C>0$ independent of $h$ such that 
\begin{equation}\label{ineq:calculus for Jh}
    \|f(J_h)\|_{\L(X^q_{0})} \le C \|f\|_{H^\infty(\Sigma_\nu)}, \quad f \in H^\infty(\Sigma_\nu) \cap H^1(\Sigma_\nu) .
\end{equation}
Moreover, by elementary calculations we have
\begin{equation}\label{eq:formula for R(z,Aqh)}
		R(z,J_h) = \frac{1}{1-h^2z}(1+h^2A_q^\#)  \left( \frac{z-h^2}{1-h^2z} -A_q^\# \right)^{-1}, \quad z \in \C \setminus \Sigma_{\nu}.
\end{equation}
In order to prove Theorem \ref{theorem:Hcalc for Ah} we need the following resolvent estimates:
\begin{lemma}
    For every $\nu \in (0,\pi/2)$ there is a constant $C>0$, independent of $h$, such that 
    \begin{align}
		\|R(z,A_q^\#)-R(z,J_h)\|_{\L(X^q_0)} &\le C h^{2}, \quad z \in \partial \Sigma_\nu  \label{ineq:Resolvent estimate Aq-Aqh}
        \\
        \|R(z,A_q^\#)-R(z,A_h^\#)P_h\|_{\L(X^q_0)}&\le C h^2, \quad z \in \C \setminus \Sigma_\nu.        \label{ineq:Resolvent estimate Aq-Ah}
	\end{align}
\end{lemma}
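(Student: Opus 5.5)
For \eqref{ineq:Resolvent estimate Aq-Aqh} I would avoid manipulating \eqref{eq:formula for R(z,Aqh)} directly and instead use the functional calculus of $A_q^\#$, since $J_h=\phi_h(A_q^\#)$ with $\phi_h(\mu)=(h^2+\mu)/(1+h^2\mu)$. The plan is to write
\[
R(z,A_q^\#)-R(z,J_h)=\psi_{z,h}(A_q^\#),\qquad \psi_{z,h}(\mu)=\frac{1}{z-\mu}-\frac{1}{z-\phi_h(\mu)},
\]
and bound $\sup_{\mu\in\Sigma_{\nu'}}|\psi_{z,h}(\mu)|$ for some $\nu'<\nu$. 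Since $A_q^\#$ has a bounded $H^\infty$-calculus of angle zero, this sup controls the operator norm; note that $\psi_{z,h}$ is bounded and decays at $0$ and $\infty$ up to constants, which is enough after subtracting a constant or using the extended calculus.

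The key computation is the identity
\[
\psi_{z,h}(\mu)=\frac{\mu-\phi_h(\mu)}{(z-\mu)(z-\phi_h(\mu))},\qquad \mu-\phi_h(\mu)=\frac{h^2(\mu^2-1)}{1+h^2\mu}.
\]
I would first show that $\phi_h$ maps $\Sigma_{\nu'}$ into $\Sigma_{\nu'}$. It is a Möbius map with real positive coefficients, and $\phi_h(\mu)$ is a convex combination of $\mu$ and $1/\mu$ in argument sense. Consequently, for $z\in\partial\Sigma_\nu$ we get $|z-\mu|\gtrsim |z|+|\mu|$ and $|z-\phi_h(\mu)|\gtrsim |z|+|\phi_h(\mu)|$.

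The bound then splits into two regimes:
\begin{enumerate}
\item If $|\mu|\le h^{-2}$, then $|\mu-\phi_h|\lesssim h^2(1+|\mu|^2)$ and $|\phi_h(\mu)|\gtrsim |\mu|+h^2$. This gives
\[
|\psi_{z,h}|\lesssim \frac{h^2(1+|\mu|^2)}{(|z|+|\mu|)^2}.
\]
This is $\lesssim h^2$ provided $|\mu|\gtrsim 1$. For $|\mu|\le 1$ one has $|\mu-\phi_h|\lesssim h^2$, and the denominator is bounded below only by $(|z|+|\mu|)(|z|+h^2+|\mu|)$.
\item If $|\mu|\ge h^{-2}$, then $|\psi_{z,h}|\le 1/|z-\mu|+1/|z-\phi_h(\mu)|$, with $|\phi_h(\mu)|\gtrsim h^{-2}$. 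Hence this case is $\lesssim h^2$ directly.
\end{enumerate}

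The main obstacle is small $z$ and small $\mu$. Here I would use the invertibility of $A_q^\#$: its spectrum lies in $\{|\mu|\ge\delta\}$, so one may work with a sector truncated near $0$ (equivalently, functional calculus of the shifted operator). On that region the case $|\mu|\le 1$ is handled by the lower bound $|\mu|\ge\delta$.

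For \eqref{ineq:Resolvent estimate Aq-Ah}, I would use the identity \eqref{eq:R(lambda,Ah) identity with R_h 1} applied to general $u\in X^q_0$. Specifically, $R(z,A_h^\#)P_hu=P_hR(z,A_q^\#)u+A_h^\#R(z,A_h^\#)(P_h-R_h^\#)R(z,A_q^\#)u$, so that
\[
R(z,A_q^\#)-R(z,A_h^\#)P_h=(I-P_h)R(z,A_q^\#)-A_h^\#R(z,A_h^\#)(P_h-R_h^\#)R(z,A_q^\#).
\]
The first term is bounded by $Ch^2\|R(z,A_q^\#)\|_{\L(X^q_0,X^q_1)}\le Ch^2$, using \eqref{Convergence estimate for I-Ph} and \eqref{sectoriality of Aq}. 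For the second term, $A_h^\#R(z,A_h^\#)=zR(z,A_h^\#)-I$ is uniformly bounded on $X^q_{0,h}$ by Lemma \ref{lemma:sectoriality of Ah}. Combining this with \eqref{Convergence estimate for Ph-Rh} and the $X^q_0\to X^q_1$ bound of the resolvent gives $Ch^2$ as well.
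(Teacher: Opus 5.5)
Your proof of \eqref{ineq:Resolvent estimate Aq-Ah} is the paper's argument. Your identity $R(z,A_q^\#)-R(z,A_h^\#)P_h=(I-P_h)R(z,A_q^\#)-A_h^\#R(z,A_h^\#)(P_h-R_h^\#)R(z,A_q^\#)$ is the same decomposition, and you bound it with the same ingredients.

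For \eqref{ineq:Resolvent estimate Aq-Aqh} there is a gap, exactly at the point you call the main obstacle. The $H^\infty$-calculus of $A_q^\#$ only gives $\|\psi_{z,h}(A_q^\#)\|\le C\sup_{\Sigma_{\nu'}}|\psi_{z,h}|$, and this supremum is not $O(h^2)$. Indeed, $\psi_{z,h}(0)=\frac1z-\frac1{z-h^2}=\frac{-h^2}{z(z-h^2)}$, which has modulus $(2h^2\sin(\nu/2))^{-1}$ at $z=h^2e^{i\nu}$ and is unbounded as $z\to0$. So everything rests on replacing the supremum over $\Sigma_{\nu'}$ by the supremum over $\{\mu\in\Sigma_{\nu'}:|\mu|\ge\delta\}$. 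The fact that the spectrum avoids $B(0,\delta)$ does not by itself allow this, since a bounded $H^\infty(\Sigma_{\nu'})$-calculus is an estimate in terms of the supremum over the whole sector. The claim is true, but it needs an argument that uses invertibility quantitatively, namely the uniform bound $\|R(w,A_q^\#)\|\le C(1+|w|)^{-1}$ for $w\notin\Sigma_{\nu''}$. For small $\epsilon>0$, $A_q^\#-\epsilon$ is still sectorial, and the resolvent identity gives $\|f(A_q^\#-\epsilon)-f(A_q^\#)\|\le C\epsilon\|f\|_{H^\infty}\int_{\partial\Sigma_{\nu''}}(1+|w|)^{-2}\,|dw|$. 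Hence $A_q^\#-\epsilon$ has a bounded calculus. Applying it to $\psi_{z,h}(\cdot+\epsilon)$ and using $\epsilon+\Sigma_{\nu'}\subset\{\mu\in\Sigma_{\nu'}:|\mu|>\epsilon\}$, your case analysis, which is correct on the truncated region, then closes the argument.

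The paper avoids this detour entirely by reading your scalar factorization at the operator level. With $\xi=\phi_h^{-1}(z)=\frac{z-h^2}{1-h^2z}$ one has $R(z,A_q^\#)-R(z,J_h)=\frac{h^2}{1-h^2z}\bigl((A_q^\#)^2-1\bigr)R(\xi,A_q^\#)R(z,A_q^\#)$. Your own observation $\phi_h(\Sigma_\nu)\subset\Sigma_\nu$ gives $\xi\notin\Sigma_\nu$, since otherwise $z=\phi_h(\xi)\in\Sigma_\nu$. Therefore $A_q^\#R(\xi,A_q^\#)$, $A_q^\#R(z,A_q^\#)$, $R(\xi,A_q^\#)$ and $R(z,A_q^\#)$ are uniformly bounded by \eqref{sectoriality of Aq}, and $|1-h^2z|^{-1}\le C$ on $\partial\Sigma_\nu$. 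This uses only sectoriality and invertibility, with no $H^\infty$-calculus. Invertibility enters through the resolvent bound near $0$, which is exactly what a pointwise supremum of $\psi_{z,h}$ over the sector cannot capture.
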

\begin{proof} We first prove \eqref{ineq:Resolvent estimate Aq-Aqh}. 
By the identity $A_q^\#-J_h = h^2((A_q^\#)^2-1)(1+h^2A_q^\#)^{-1}$ and \eqref{eq:formula for R(z,Aqh)}, we get 
\begin{align*}
	R(z,A_q^\#) - R(z,J_h) &= R(z,J_h) (A_q^\#-J_h) R(z,A_q^\#)  
	\\
	&= \frac{h^2}{1-h^2z}((A_q^\#)^2-1)( \frac{z-h^2}{1-h^2z}-A_q^\#)^{-1} R(z,A_q^\#).
\end{align*}
Let $z \in \partial \Sigma_\nu$ and let $\xi = \frac{z-h^2}{1-h^2z}$. Note that $\Re \xi = |1-h^2z|^{-2}((1+h^4)\Re z -h^2(1+|z|^2))$ and $\Im \xi = |1-h^2z|^{-2}(1-h^4) \Im z $. Hence,  $\xi \in \C \setminus \Sigma_\nu$.  Therefore, the sectoriality of $A_q^\#$ implies 
$$ \|R(z,A_q^\#) - R(z,J_h) \|_{\L(X^q_0)} \le C \frac{h^2}{|1-h^2z|}  \le C h^2, $$
where in the last estimate we used the elementary inequality
$$\frac 1{|1-h^2z|^2} = \frac{1}{1-2h^2|z|\cos\nu +h^4|z|^2} \le \frac 1{(1-\cos\nu)(1+h^4|z|^2)}\le C.$$
We now prove \eqref{ineq:Resolvent estimate Aq-Ah}. Let $z \in \C \setminus \Sigma_\nu$. By the identity 
\begin{equation*} 
    R(z,A_q^\#)-R(z,A_h^\#)P_h= (I-P_h)R(z,A_q^\#) + P_h R(z, A_q^\#)-R(z, A_h^\#) P_h
\end{equation*}
and the triangle inequality, it suffices to establish that
$$  \|(I-P_h)R(z,A_q^\#)\|_{\L(X^q_0)} + \| R(z, A_h^\#) P_h - P_h R(z, A_q^\#) \|_{\L(X^q_0)} \le C h^2.$$
Note that by the convergence estimate \eqref{Convergence estimate for I-Ph} and \eqref{sectoriality of Aq} we get
   \begin{align*}
       \|(I-P_h)R(z,A_q^\#)\|_{\L(X^q_0)} &\le \|I-P_h\|_{\L(X^q_1,X^q_0)} \|R(z,A_q^\#)\|_{\L(X^q_0,X^q_1)} \le Ch^2  .
   \end{align*}
  For the last term we calculate
    \begin{align*}
R(z, A_h^\#) P_h - P_h R(z, A_q^\#) 
&= R(z, A_h^\#) ( P_h (z - A_q^\#) - (z - A_h^\#) P_h ) R(z, A_q^\#) \\
&= R(z, A_h^\#) ( A_h^\# P_h - P_h A_q^\# ) R(z, A_q^\#) \\
&= R(z, A_h^\#) A_h^\# ( P_h (A_q^\#)^{-1} - (A_h^\#)^{-1} P_h ) A_q^\# R(z, A_q^\#)
\end{align*}
and thus estimate
\begin{align*}
& \| R(z, A_h^\#) P_h - P_h R(z, A_q^\#) \|_{\L(X^q_0)} 
\\
&\leq \| R(z, A_h^\#) A_h^\#\|_{\L(X^q_{0,h})} 
\| P_h (A_q^\#)^{-1} - (A_h^\#)^{-1} P_h \|_{\L(X^q_0)} 
\| A_q^\# R(z, A_q^\#) \|_{\L(X^q_0)} 
\\
& \stackrel{\mathrm{(i)}}{\le}  C  \| P_h (A_q^\#)^{-1} - (A_h^\#)^{-1} P_h \|_{\L(X^q_0)}  \stackrel{\mathrm{(ii)}}{=} C\| (P_h - R_h^\#) (A_q^\#)^{-1} \|_{\L(X^q_0)}
\\
&= C \|P_h-R_h^\#\|_{\L(X^q_1,X^q_0)}  \stackrel{\mathrm{(iii)}}{\le}  C h^2,
\end{align*}
where (i) follows from the sectoriality of $A_q^\#$ and $A_h^\#$ (see Lemma \ref{lemma:sectoriality of Ah}), (ii) from the identity $A_h^\# R_h^\#=P_h A_q^\#$, and  (iii) from \eqref{Convergence estimate for Ph-Rh}.
\end{proof}

We are now ready to present the proof of Theorem \ref{theorem:Hcalc for Ah}. 
\begin{proof}[Proof of Theorem \ref{theorem:Hcalc for Ah}]
By virtue of \eqref{ineq:calculus for Jh}, to establish \eqref{proof of main thm sufficient condition 1} it suffices to find a constant $C>0$, independent of $h$, such that
\begin{equation}\label{proof of main thm sufficient condition 2}
     \|f(A_h^\#)P_h - f(J_h)\|_{\L(X^q_0)} \le C \|f\|_{H^\infty(\Sigma_\nu)},  \quad f \in H^\infty(\Sigma_\nu) \cap H^1(\Sigma_\nu) ,  
\end{equation}
Recall that 
	\begin{equation} \label{eq:calculus integral def}
		 f(A_h^\#)P_h - f(J_h) = \frac{1}{2\pi i} \int_{\partial \Sigma_\sigma} f(z) (R(z,A_h^\#)P_h-R(z,J_h)) \,dz,
	\end{equation}
	where $0<\sigma <\nu$.
     By \eqref{norm of AhPh} and \eqref{norm of Aqh}, there is $c^*>0$ such that 
	\begin{equation}\label{ineq:bound for both norms}
		c^* h^{-2} \ge 2 \max \{ \|A_h^\#\|_{\L(X^q_{0,h})} , \|J_h\|_{\L(X^q_0)}\},
	\end{equation}
and thus $$\rho(J_h) \cap \rho(A_h^\#) \supset \{z \in \C \colon \Re z \ge c^*h^{-2}\}.$$
Moreover, as 
$$ \rho(J_h) \cap \rho(A_h^\#) \supset \C \setminus (0,\infty),$$
we obtain that 
\begin{equation}
	 \rho(J_h) \cap \rho(A_h^\#) \supset \C \setminus (0,c^*h^{-2}).
\end{equation}
Hence, by Cauchy's theorem we can deform the contour of integration in \eqref{eq:calculus integral def} and write 
	\begin{equation} \label{eq:deformed contour}
	f(A_h^\#)P_h - f(J_h)  \eqcolon I_1 +I_2 + I_3,
\end{equation}
where 
\begin{equation*}
	I_k\coloneq  \frac{1}{2\pi i} \int_{\Gamma_k}  f(z) (R(z,A_h^\#)P_h-R(z,J_h)) \,dz, \quad k=1,2,3,
\end{equation*}
and the contours $\Gamma_k$ are given by 
\begin{align*}
\Gamma_1 &: \text{the line segment from } \frac{c^*}{h^2 \cos\nu} e^{i\nu} \text{ to } 0,
\\
\Gamma_2 &: \text{the line segment from } 0 \text{ to } \frac{c^*}{h^2 \cos\nu} e^{-i\nu},
\\
\Gamma_3 &: \text{the vertical segment from } \frac{c^*}{h^2} - i \frac{c^*}{h^2} \tan\nu \text{ to } \frac{c^*}{h^2} + i \frac{c^*}{h^2} \tan\nu,
\end{align*}
see Figure \ref{fig:contours}.

By the resolvent estimates  \eqref{ineq:Resolvent estimate Aq-Aqh} and \eqref{ineq:Resolvent estimate Aq-Ah}, we obtain 
\begin{align*}
    \|I_1+I_2\|_{\L(X^q_0)} &\le \frac{1}{2\pi} \sum_{k=1}^2 \int_{\Gamma_k} |f(z)| \|R(z,A_h^\#)P_h-R(z,J_h)\|_{\L(X^q_0)} \, |dz| 
    \\
    &\le C h^2 \frac{c^*}{h^2 \cos \nu}  \|f\|_{H^\infty(\Sigma_\nu)}  \le C \|f\|_{H^\infty(\Sigma_\nu)} .  
\end{align*} 
Finally, for $z \in \Gamma_3$ we have that $|z| \ge c^* h^{-2} $ and thus \eqref{ineq:bound for both norms} and \eqref{ineq:stability of Ph from Xq0 to Xq0h} imply  
\begin{align*}
	\big\|R(z,A_h^\#) P_h - R(z, J_h)\big\|_{\L(X^q_0)}
	&\le 
	\|R(z,A_h^\#) P_h \|_{\L(X^q_0)}
	+ \|R(z, J_h)\|_{\L(X^q_0)}  
	\\
	&\le 
	C\|R(z,A_h^\#) \|_{\L(X^q_{0,h})}
	+ \|R(z, J_h)\|_{\L(X^q_0)} 
	\\
	&\le 
	\tfrac{C}{|z| - \|A_h^\#\|_{\L(X^q_{0,h})}} 
	+ \tfrac{1}{|z| - \|J_h\|_{\L(X^q_0)}}  \le C h^{2} ,
\end{align*} 
and thus $\|I_3\|_{\L(X^q_0)} \le C\|f\|_{H^\infty(\Sigma_\nu)}.$

\begin{figure}[h!]
	\centering
	\begin{tikzpicture}[scale=2,
		every path/.style={>=stealth},
		gamma/.style={thick, postaction={decorate},
			decoration={markings, mark=at position 0.6 with {\arrow{>}}}},
		gammadashed/.style={thick, dashed, postaction={decorate},
			decoration={markings, mark=at position 0.6 with {\arrow{>}}}}
		]
		
		\def\thet{30}       
		\def\Rzero{1.5}     
		
		\coordinate (A) at ({\Rzero*cos(\thet)},{\Rzero*sin(\thet)});
		\coordinate (B) at ({\Rzero*cos(\thet)},{-\Rzero*sin(\thet)});
		
		\draw[->] (-0.2,0) -- (2.2,0) node[right] {$\mathrm{Re}\, z$};
		\draw[->] (0,-1.5) -- (0,1.5) node[left] {$\mathrm{Im}\, z$};
		
		\draw[gamma] (A) -- (0,0) node[midway, above left] {$\Gamma_1$};
		
		\draw[gammadashed] (B) -- (A) node[midway, right] {$\Gamma_3$};
		
		\draw[gammadashed] (0,0) -- (B) node[midway, below left] {$\Gamma_2$};
		
		\draw (0.7,0) arc (0:\thet:0.7);
		\node at (0.55,0.15) {$\nu$};
		
		\node at (A) [above right] {$\displaystyle \frac{c^*}{h^2\cos\nu}\, e^{i\nu}$};
		\node at (B) [below right] {$\displaystyle \frac{c^*}{h^2\cos\nu}\, e^{-i\nu}$};
		
	\end{tikzpicture}

	\caption{The contours $\Gamma_1$, $\Gamma_2$, and $\Gamma_3$ in the sector $\Sigma_\nu$.}
	\label{fig:contours}
\end{figure}
\end{proof}

\begin{remark} \label{remark:necessity of Rh}
     We do not know whether the stability of the Ritz projection $R_h$ is necessary for the proof of Theorem \ref{theorem:Hcalc for Ah}. However, it is necessary that $R_h$ is stable to have the norm characterization \eqref{Xah norm equiv Xa norm} of the abstract discrete fractional spaces given in Corollary \ref{cor: Xah norm equiv Xa norm}. Indeed, suppose that \eqref{Xah norm equiv Xa norm}  holds. In particular, 
\begin{equation}\label{ineq: discrete Riesz}
    \|(A_q^\#)^{1/2} (A_h^\#)^{-1/2} u_h\|_{X^q_0} \le C \|u_h\|_{X^q_{0,h}}, \quad u_h \in S_h.
\end{equation}
By \eqref{Xah norm equiv Xa norm}, in order to show that $R_h$ is stable on $X^q_{1/2}$, it suffices to prove
    $$ \|(A_h^\#)^{-1/2} P_h (A_q^\#)^{1/2} u \|_{X^q_{0,h}} \le C \|u\|_{X^q_0} ,\quad u \in X^q_0.$$
    We prove the latter by a duality argument. For $u \in X^q_0$ and $v \in X^{q'}_0$, using the definition and stability of $P_h$ and \eqref{ineq: discrete Riesz} we estimate 
    \begin{align*}
        ((A_h^\#)^{-1/2}P_h (A_q^\#)^{1/2} u,v)_{L^2} & = ((A_h^\#)^{-1/2}P_h (A_q^\#)^{1/2} u,P_hv)_{L^2} =(P_h (A_q^\#)^{1/2} u,(A_h^\#)^{-1/2} P_h v)_{L^2} 
        \\
        &= ((A_q^\#)^{1/2} u,(A_h^\#)^{-1/2} P_h v)_{L^2}
        =(u,(A_{q'}^\#)^{1/2}  (A_h^\#)^{-1/2} P_h v)_{L^2} 
        \\
        &\le \|u\|_{X^q_0} \|(A_{q'}^\#)^{1/2} (A_h^\#)^{-1/2} P_h v\|_{X^{q'}_0} \le C \|u\|_{X^q_0} \|P_h v\|_{X^{q'}_0} \\
        &\le C\|u\|_{X^q_0} \|v\|_{X^{q'}_0}. 
    \end{align*}
\end{remark}
We now establish the corollaries of Theorem \ref{theorem:Hcalc for Ah} stated in Section \ref{sec:principal part of Ah}. We begin with the proof of Corollary \ref{cor: Xah norm equiv Xa norm}, which provides a characterization of the discrete fractional spaces.
\begin{proof}[Proof of Corollary \ref{cor: Xah norm equiv Xa norm}]
The stability \eqref{Ph stable Xqbeta to Xqbetah} holds for $\alpha \in \{0,1\}$ by 
 \eqref{ineq:stability of Ph from Xq0 to Xq0h} and \eqref{Ph stable X1 to X1h}. Therefore,  Theorem \ref{theorem:Hcalc for Ah}, Lemma \ref{lem:BIP} and complex interpolation give the range $\alpha \in (0,1)$.
Evidently, \eqref{Xah norm equiv Xa norm} holds for $\alpha=0$ with $C=1$. 
    By complex interpolation, it suffices to prove \eqref{Xah norm equiv Xa norm} for $\alpha=1/2$. Moreover, by \eqref{Ph stable Xqbeta to Xqbetah} we get 
    $\|u_h\|_{X^{q,\#}_{1/2,h}} \le C \|u_h\|_{X^{q,\#}_{1/2}}$ for every $u_h \in S_h$. Hence, it remains to prove
    	$$ \|u_h\|_{X^{q,\#}_{1/2}} \le C \|u_h\|_{X^{q,\#}_{1/2,h}},\quad u_h \in S_h.$$
        This follows from a duality argument similar to \cite[Lemma 4.6 (iv)]{LiZhou2025_fully_discrete_AC}, combining the stability of $P_h$ from $X^{q',\#}_{1/2}$ to $X^{q',\#}_{1/2,h}$ (see \eqref{Ph stable Xqbeta to Xqbetah}) with the stability of the Ritz projection $R_h^\# = (A_h^\#)^{-1} P_h A_q^\#$ on $X^{q',\#}_{1/2}$ provided by \eqref{ineq:stability of Rh on Xq1/2}.
\end{proof}
We now prove Corollary \ref{cor:discrete vs continuous real int} which identifies certain real interpolation scales of the discrete fractional spaces. 

\begin{proof}[Proof of Corollary \ref{cor:discrete vs continuous real int}] To prove \eqref{ineq: equivalence of real interpolation for positive beta} we note that by \eqref{Xah norm equiv Xa norm} the inclusion map $\iota_h$ is stable from $X^{q,\#}_{\beta,h}$ to $X^{q,\#}_\beta$ and from $X^{q,\#}_{\alpha ,h}$ to $X^{q,\#}_\alpha $. Hence, by real interpolation, there is $C>0$ independent of $h$ such that 
$$ \|\iota_h\|_{\L \big( (X^{q,\#}_{\beta,h},X^{q,\#}_{\alpha ,h})_{\theta,p}, (X^{q,\#}_\beta,X^{q,\#}_\alpha )_{\theta,p} \big )} \le C,$$
and consequently, 
\begin{align*}
\|u_h\|_{(X^{q,\#}_\beta,X^{q,\#}_\alpha )_{\theta,p}} =\|\iota _hu_h\|_{(X^{q,\#}_\beta,X^{q,\#}_\alpha )_{\theta,p}}  &\le \|\iota_h\|_{\L \big( (X^{q,\#}_{\beta,h},X^{q,\#}_{\alpha ,h})_{\theta,p}, (X^{q,\#}_\beta,X^{q,\#}_\alpha )_{\theta,p} \big )}\|u_h\|_{(X^{q,\#}_{\beta,h},X^{q,\#}_{\alpha ,h})_{\theta,p}} 
\\
&\le C \|u_h\|_{(X^{q,\#}_{\beta,h},X^{q,\#}_{\alpha ,h})_{\theta,p}} .
\end{align*}
Moreover, by \eqref{Ph stable Xqbeta to Xqbetah} and real interpolation there is $C>0$, independent of $h$, such that 
$$ \|P_h\|_{\L \big( (X^{q,\#}_\beta,X^{q,\#}_\alpha )_{\theta,p}, (X^{q,\#}_{\beta,h},X^{q,\#}_{\alpha ,h})_{\theta,p} \big )}  \le C$$
and thus 
\begin{align*}
	\|u_h\|_{(X^{q,\#}_{\beta,h},X^{q,\#}_{\alpha ,h})_{\theta,p}} &=\|P_h u_h\|_{(X^{q,\#}_{\beta,h},X^{q,\#}_{\alpha ,h})_{\theta,p}}  \le \|P_h\|_{\L \big( (X^{q,\#}_{\beta},X^{q,\#}_{\alpha })_{\theta,p}, (X^{q,\#}_{\beta,h},X^{q,\#}_{\alpha ,h})_{\theta,p} \big )} \|u_h\|_{(X^{q,\#}_\beta,X^{q,\#}_\alpha )_{\theta,p}} 
	\\
	&\le C  \|u_h\|_{(X^{q,\#}_\beta,X^{q,\#}_\alpha )_{\theta,p}}.
\end{align*}
This proves \eqref{ineq: equivalence of real interpolation for positive beta}. 
We now prove \eqref{ineq: equivalence of real interpolation for trace space}. Note that \eqref{ineq: equivalence of real interpolation for trace space} trivially holds for $\theta=0$ with $C=1$. By Lemma \ref{lemma:sectoriality of Ah} and standard semigroup theory, there is $C>0$ independent of $h$ such that $\sup_{t>0} \| \exp{-tA_h^\#} \|_{\L(X^{q,\#}_{0,h})}+\|t A_h^\# \exp{-tA_h^\#} \|_{\L(X^{q,\#}_{0,h})} \le C$ and thus by \cite[Proposition 2.2.15]{Lunardi_analytic_book} the embedding
$$ (X^{q,\#}_{0,h} , X^{q,\#}_{1,h})_{1/2,1} \hookrightarrow X^{q,\#}_{1/2,h} \hookrightarrow (X^{q,\#}_{0,h} , X^{q,\#}_{1,h})_{1/2,\infty}$$
holds with constants that do not depend on $h$. Therefore, by the reiteration theorem (cf.\,\cite[Theorem L.3.1]{analysis_volume_3}) we get 
$$ (X^{q,\#}_{0,h} ,X^{q,\#}_{1,h})_{\theta,p} = (X^{q,\#}_{0,h} ,X^{q,\#}_{1/2,h})_{2\theta,p} $$
with equivalent norms and constants that are independent of $h$. Since the reiteration theorem also gives 
$$(X^{q,\#}_{0} ,X^{q,\#}_{1})_{\theta,p} =  (X^{q,\#}_{0} ,X^{q,\#}_{1/2})_{2\theta,p} $$
with equivalent norms, \eqref{ineq: equivalence of real interpolation for positive beta} implies the desired result.  

\end{proof}

\subsection{$H^\infty$-calculus for the full operator $A_h$}\label{sec:with lower-order}
In this section, we extend the results of Section \ref{sec:principal part of Ah} to operators with lower-order terms and give the proof of Theorem \ref{thm:H-calc for Ah}. 
\begin{proof}[Proof of Theorem \ref{thm:H-calc for Ah}]
 By a standard perturbation argument (cf.\,\cite[Theorem 16.2.7]{analysis_volume_3}), it suffices to show that there is $C>0$, independent of $h$, such that
\begin{equation}\label{ineq: Bh bounded by sqrt of Ah}
    \|B_hu_h\|_{X^q_{0,h}} \le C \|(A_h^\#)^{1/2} u_h\|_{X^q_{0,h}} , \quad u_h \in S_h.
\end{equation}
By the norm equivalence \eqref{Xah norm equiv Xa norm}, and noting that $X^q_{1/2}=D((A^\#_q)^{1/2})=W^{1,q}_0$ with equivalent norms by \eqref{eq:fraction space equals both domains} and \eqref{X12 characterization}, we have 
$$
\|(A_h^\#)^{1/2}u_h\|_{X^q_{0,h}} \eqsim \|(A^\#_q)^{1/2}u_h\|_{X^q_0} \eqsim   \|u_h\|_{X^q_{1/2}}
$$
with constants independent of $h$. Hence, 
\begin{equation*}
    \|B_hu_h\|_{X^q_{0,h}} \le C \| u_h\|_{W^{1,q}} \le C \|u_h\|_{X^q_{1/2}}\le C  \|(A_h^\#)^{1/2}u_h\|_{X^q_{0,h}}, \quad u_h \in S_h,
\end{equation*}
This  proves \eqref{ineq: Bh bounded by sqrt of Ah}.
\end{proof}
From now on we fix $\nu \in (0,\pi/2)$ and let $\lambda_0 \ge0$ be as in Theorem \ref{thm:H-calc for Ah}.
For $\alpha \in (0,1]$ we define 
$$X^q_{\alpha,h} \text{ to be  $S_h$ with the $\|(\lambda_0+A_h)^\alpha\cdot\|_{X^q_0}$-norm.} $$
We now prove a discrete version of \eqref{eq:domains equal with lower-order terms}.
\begin{lemma}\label{lemma:norm equivalence for lower-order terms} Let $q \in (1,\infty)$ and $\alpha \in (0,1]$. Suppose that Assumption \ref{main assumptions} holds for $A^\#$. Then, with $\lambda_0 \ge0$ as in Theorem \ref{thm:H-calc for Ah}, there is a constant $C>0$, independent of $h$, such that 
\begin{equation}\label{ineq: norm equivalence with lower-order terms}
  C^{-1} \|(A_h^\#)^\alpha  u_h \|_{X^q_{0,h}} \le     \| (\lambda_0 + A_h)^\alpha u_h\|_{X^q_{0,h}} \le C \|(A_h^\#)^\alpha u_h \|_{X^q_{0,h}} ,\quad u_h \in S_h.
\end{equation}
\end{lemma}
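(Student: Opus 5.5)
The plan is to prove the endpoint case $\alpha=1$ and the case $\alpha=1/2$ directly, and then obtain $\alpha\in(0,1)$ by complex interpolation. Both $A_h^\#$ and $\lambda_0+A_h$ have a bounded $H^\infty$-calculus on $X^q_{0,h}$, uniformly in $h$, by Theorem \ref{theorem:Hcalc for Ah} and Theorem \ref{thm:H-calc for Ah}. In particular both have BIP with bounds independent of $h$. Hence Lemma \ref{lem:BIP} identifies, with $h$-uniform constants, $X^{q,\#}_{\alpha,h}=[X^{q}_{0,h},X^{q,\#}_{1,h}]_\alpha$ and $X^q_{\alpha,h}=[X^q_{0,h},X^q_{1,h}]_\alpha$. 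The uniformity in $h$ should follow by tracking constants in the standard proof, which depend only on the BIP bounds and the sectoriality constants. So once the equivalence $X^{q,\#}_{1,h}\eqsim X^q_{1,h}$ holds uniformly, interpolation gives every $\alpha\in(0,1)$.

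For $\alpha=1$, write $\lambda_0+A_h = A_h^\# + (\lambda_0+B_h)$. By \eqref{ineq: Bh bounded by sqrt of Ah} and \eqref{ineq: stability of inverse of Ah},
\[
\|(\lambda_0+B_h)u_h\|_{X^q_{0,h}} \le C\|(A_h^\#)^{1/2}u_h\|_{X^q_{0,h}}\le C\|A_h^\# u_h\|_{X^q_{0,h}},
\]
where the last step uses that $(A_h^\#)^{-1/2}$ is uniformly bounded, which follows from the uniform sectoriality and invertibility in Lemma \ref{lemma:sectoriality of Ah}. This gives the upper bound $\|(\lambda_0+A_h)u_h\|\le C\|A_h^\# u_h\|$.

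For the lower bound, write $A_h^\# = (\lambda_0+A_h) - (\lambda_0+B_h)$. Then it suffices to bound $\|(A_h^\#)^{1/2}u_h\|$ by $\|(\lambda_0+A_h)u_h\|$. The naive route uses the moment inequality $\|(A_h^\#)^{1/2}u_h\|\le C\|u_h\|^{1/2}\|A_h^\#u_h\|^{1/2}$ together with Young's inequality, absorbing $\epsilon\|A_h^\#u_h\|$ into the left side. This leaves the term $C_\epsilon\|u_h\|_{X^q_{0,h}}$, which is bounded by $C_\epsilon\|(\lambda_0+A_h)^{-1}\|\,\|(\lambda_0+A_h)u_h\|$. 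The factor $\|(\lambda_0+A_h)^{-1}\|$ is uniformly bounded by Theorem \ref{thm:H-calc for Ah}, so this settles $\alpha=1$ in both directions.

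I expect the main obstacle to be the uniformity of the interpolation step. Rather than tracking constants through Lemma \ref{lem:BIP}, I would first try to treat $\alpha=1/2$ directly via Corollary \ref{cor: Xah norm equiv Xa norm}. That corollary gives $\|(A_h^\#)^{1/2}u_h\|\eqsim\|u_h\|_{W^{1,q}}$. The analogous statement for $\lambda_0+A_h$ is proved by copying the same duality argument: the bound \eqref{Ph stable Xqbeta to Xqbetah} for $\lambda_0+A_h$ follows from $\alpha\in\{0,1\}$ and interpolation, and the Ritz-type identity is handled via \eqref{ineq: Bh bounded by sqrt of Ah}. If that direct route fails, I would fall back on the interpolation argument, using that the BIP bounds $\sup_{|t|\le1}\|(A_h^\#)^{it}\|$ and $\sup_{|t|\le1}\|(\lambda_0+A_h)^{it}\|$ are uniform in $h$ by the uniform $H^\infty$-calculus.
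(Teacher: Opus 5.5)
Your argument is correct and is essentially the paper's proof. You reduce to $\alpha=1$ by complex interpolation, with $h$-uniform constants coming from the $h$-uniform $H^\infty$-calculus (hence BIP bounds) of $A_h^\#$ and $\lambda_0+A_h$, and then handle $\alpha=1$ via \eqref{ineq: Bh bounded by sqrt of Ah}, the moment inequality with Young's inequality, absorption, and the uniform bounds on $(A_h^\#)^{-1}$ and $(\lambda_0+A_h)^{-1}$. The separate direct treatment of $\alpha=1/2$ is unnecessary: it would not spare you the interpolation step for the other values of $\alpha$, and your fallback justification of uniformity through the BIP bounds is exactly what the paper relies on.
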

\begin{proof}
By Theorems \ref{thm:H-calc for Ah} and \ref{theorem:Hcalc for Ah}, both $A_h^\#$ and $ \lambda_0+A_h$ admit a bounded $H^\infty(\Sigma_\nu)$-calculus with a constant independent of $h$. Hence, using complex interpolation, it suffices to prove \eqref{ineq: norm equivalence with lower-order terms} for $\alpha=1$. Moreover, since $\|(A^\#_h)^{-1}\|_{\L(X^q_{0,h})}+ \| (\lambda_0+A_h)^{-1}\|_{\L(X^q_{0,h})} \le C$ is uniformly bounded in $h$, it suffices to show that the equivalence of the inhomogeneous norms
\begin{equation}\label{ineq: norm equivalence with lower-order terms nonhomogeneous}
 \|A_h^\#  u_h \|_{X^q_{0,h}} + \|u_h\|_{X^q_{0,h}} \eqsim    \| (\lambda_0 + A_h) u_h\|_{X^q_{0,h}} + \|u_h\|_{X^q_{0,h}} ,\quad u_h \in S_h
\end{equation}
holds with constants independent of $h$.  By complex interpolation and Young's inequality, we have
$$ \|(A_h^\#)^{1/2} u_h\|_{X^q_{0,h}} \le C \|A_h^\# u_h\|_{X^q_{0,h}}^{1/2}\| u_h\|_{X^q_{0,h}}^{1/2} \le  C\eps \| A_h^\# u_h\|_{X^q_{0,h}} + C_\eps \| u_h\|_{X^q_{0,h}},  $$
which, together with \eqref{ineq: Bh bounded by sqrt of Ah}, implies  
\begin{equation}\label{ineq: Bh kickback}
    \|B_h u_h\|_{X^q_{0,h}} \le C\eps \| A_h^\# u_h\|_{X^q_{0,h}} + C_\eps \| u_h\|_{X^q_{0,h}}.
\end{equation}
A standard absorption argument using \eqref{ineq: Bh kickback} gives \eqref{ineq: norm equivalence with lower-order terms nonhomogeneous}.

\end{proof}

Lemma \ref{lemma:norm equivalence for lower-order terms} implies that the discrete fractional spaces $ X^q_{\alpha,h} $ are completely determined by the principal part of $A_h$. Moreover, it allows us to immediately obtain the following characterizations of the discrete fractional spaces.

\begin{corollary}\label{cor: Xah norm equiv Xa norm with lower-order terms}
For every $\alpha \in [0,1]$ there is a constant $C>0$, independent of $h$, such that 
\begin{equation}\label{Ph stable Xqbeta to Xqbetah with lower-order}
    \|P_h\|_{\L( X^q_\alpha ,  X^q_{\alpha,h})} \le C.
\end{equation}
   For every $\alpha \in [0,1/2]$ there is $C>0$ independent of $h$ such that 
   \begin{equation}\label{Xah norm equiv Xa norm with lower-order}
			C^{-1} \|u_h\|_{ X^q_{\alpha}} \le   \|u_h\|_{ X^q_{\alpha,h}} \le C\|u_h\|_{ X^q_{\alpha}} , \quad u_h \in S_h.
		\end{equation} 
\end{corollary}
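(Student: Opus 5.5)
The plan is to transfer Corollary \ref{cor: Xah norm equiv Xa norm} from the principal part to the shifted operator. The tools are Lemma \ref{lemma:norm equivalence for lower-order terms} on the discrete side and \eqref{eq:fraction space equals both domains} on the continuous side.

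First take $\alpha\in(0,1]$. Lemma \ref{lemma:norm equivalence for lower-order terms} gives that $X^q_{\alpha,h}$ and $X^{q,\#}_{\alpha,h}$ have equivalent norms on $S_h$, uniformly in $h$. By \eqref{eq:fraction space equals both domains}, $X^q_\alpha=X^{q,\#}_\alpha$ with equivalent norms, and these constants do not involve $h$ at all. Hence, for $u\in X^q_\alpha$,
\[
\|P_h u\|_{X^q_{\alpha,h}}\lesssim \|P_hu\|_{X^{q,\#}_{\alpha,h}}\lesssim \|u\|_{X^{q,\#}_\alpha}\lesssim \|u\|_{X^q_\alpha},
\]
where the middle step is \eqref{Ph stable Xqbeta to Xqbetah}. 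This proves \eqref{Ph stable Xqbeta to Xqbetah with lower-order} for $\alpha\in(0,1]$.

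The case $\alpha=0$ is just \eqref{stability of Ph on X0}, since $X^q_0=X^{q,\#}_0=L^q$ and $X^q_{0,h}=X^{q}_{0,h}$ carries the $L^q$-norm.

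For \eqref{Xah norm equiv Xa norm with lower-order} with $\alpha\in[0,1/2]$, I chain the same equivalences. For $u_h\in S_h$,
\[
\|u_h\|_{X^q_{\alpha,h}}\eqsim\|u_h\|_{X^{q,\#}_{\alpha,h}}\eqsim\|u_h\|_{X^{q,\#}_\alpha}\eqsim \|u_h\|_{X^q_\alpha}.
\]
Here the first equivalence is Lemma \ref{lemma:norm equivalence for lower-order terms}, the second is \eqref{Xah norm equiv Xa norm}, and the third is \eqref{eq:fraction space equals both domains}. At $\alpha=0$ all three norms are the $L^q$-norm, so the chain is trivial there.

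There is no real obstacle in this argument. The only point to check is that every constant is independent of $h$. This holds because Lemma \ref{lemma:norm equivalence for lower-order terms} and Corollary \ref{cor: Xah norm equiv Xa norm} are uniform in $h$, and because \eqref{eq:fraction space equals both domains} is a purely continuous statement.
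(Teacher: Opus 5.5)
Your proposal is correct and follows the paper's argument. The paper's proof simply says the result follows from Corollary \ref{cor: Xah norm equiv Xa norm} and Lemma \ref{lemma:norm equivalence for lower-order terms}; you have additionally spelled out the $h$-independent continuous identification \eqref{eq:fraction space equals both domains} and the trivial case $\alpha=0$, both of which the paper uses implicitly.
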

\begin{proof}
    This follows from Corollary \ref{cor: Xah norm equiv Xa norm} and Lemma \ref{lemma:norm equivalence for lower-order terms}.
\end{proof}

\begin{corollary} \label{cor:discrete vs continuous real int with lower-order terms}
	Let $q \in (1,\infty)$, $p \in [1,\infty]$ and $\theta \in (0,1)$. Then there is $C>0$, independent of $h>0$, such that the following hold:
    \begin{enumerate}
        \item [$(\mathrm{i})$] For $0 \le \beta <\alpha\le 1/2$ and $u_h \in S_h$,
	\begin{equation}\label{ineq: equivalence of real interpolation for positive beta with lower-order}
	    C^{-1} \|u_h\|_{( X^q_{\beta,h}, X^q_{\alpha,h})_{\theta,p}} \le \|u_h\|_{( X^q_{\beta}, X^q_{\alpha})_{\theta,p}} \le C\|u_h\|_{( X^q_{\beta,h}, X^q_{\alpha,h})_{\theta,p}}. 
	\end{equation}
   
    \item[$(\mathrm{ii})$] If $\theta \in [0,1/2)$ then for every $u_h \in S_h$, 
    \begin{equation}\label{ineq: equivalence of real interpolation for trace space with lower-order}
	    C^{-1} \|u_h\|_{( X^q_{0,h}, X^q_{1,h})_{\theta,p}} \le \|u_h\|_{( X^q_{0}, X^q_{1})_{\theta,p}} \le C\|u_h\|_{( X^q_{0,h}, X^q_{1,h})_{\theta,p}}. 
	\end{equation}
    \end{enumerate}
\end{corollary}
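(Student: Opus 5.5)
The plan is to reduce Corollary \ref{cor:discrete vs continuous real int with lower-order terms} to its principal-part counterpart, Corollary \ref{cor:discrete vs continuous real int}, using two ingredients: the norm equivalences between the spaces built from $\lambda_0+A_h$ and from $A_h^\#$, and the elementary fact that real interpolation with the same parameters of equivalent couples gives equivalent norms, with constants controlled by the equivalence constants. On the continuous side, \eqref{eq:fraction space equals both domains} gives $X^q_\alpha = X^{q,\#}_\alpha$ with equivalent norms for every $\alpha\in[0,1]$; the case $\alpha=0$ is trivial since both spaces equal $L^q$. These constants do not involve $h$ at all. On the discrete side, Lemma \ref{lemma:norm equivalence for lower-order terms} gives $\|u_h\|_{X^q_{\alpha,h}} \eqsim \|u_h\|_{X^{q,\#}_{\alpha,h}}$ for all $u_h\in S_h$ and $\alpha\in(0,1]$, uniformly in $h$; the case $\alpha=0$ again holds with constant $1$.

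For part (i), take $0\le\beta<\alpha\le 1/2$. The identity map is an isomorphism of the couples $(X^q_{\beta,h},X^q_{\alpha,h})$ and $(X^{q,\#}_{\beta,h},X^{q,\#}_{\alpha,h})$ with $h$-independent bounds in both directions. Applying the real interpolation functor $(\cdot,\cdot)_{\theta,p}$ to the identity and to its inverse gives
\[
\|u_h\|_{(X^q_{\beta,h},X^q_{\alpha,h})_{\theta,p}} \eqsim \|u_h\|_{(X^{q,\#}_{\beta,h},X^{q,\#}_{\alpha,h})_{\theta,p}},
\]
uniformly in $h$. The same argument with \eqref{eq:fraction space equals both domains} gives
\[
\|u\|_{(X^q_{\beta},X^q_{\alpha})_{\theta,p}} \eqsim \|u\|_{(X^{q,\#}_{\beta},X^{q,\#}_{\alpha})_{\theta,p}}.
\]
Chaining these two equivalences with \eqref{ineq: equivalence of real interpolation for positive beta} yields \eqref{ineq: equivalence of real interpolation for positive beta with lower-order}. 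Part (ii) follows in exactly the same way from \eqref{ineq: equivalence of real interpolation for trace space}, using the couples $(X^q_{0,h},X^q_{1,h})$ and $(X^q_0,X^q_1)$. The only relevant values here are $\alpha\in\{0,1\}$, for which Lemma \ref{lemma:norm equivalence for lower-order terms} and \eqref{eq:fraction space equals both domains} apply directly. The case $\theta=0$ is trivial.

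The only point needing care is uniformity in $h$. The interpolation functor is exact of type $\theta$, so the norm of the interpolated identity is at most $M_0^{1-\theta}M_1^\theta$, where $M_0,M_1$ are the endpoint equivalence constants. Since these constants are independent of $h$, so is the final constant. I do not expect any genuine obstacle: all the analytic work is already contained in Lemma \ref{lemma:norm equivalence for lower-order terms} and Corollary \ref{cor:discrete vs continuous real int}. One could alternatively repeat the proof of Corollary \ref{cor:discrete vs continuous real int} with $\lambda_0+A_h$ in place of $A_h^\#$, using Corollary \ref{cor: Xah norm equiv Xa norm with lower-order terms} and Theorem \ref{thm:H-calc for Ah}, but the reduction described above is shorter.
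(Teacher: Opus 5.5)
Your proposal is correct and matches the paper's proof, which deduces the statement directly from Corollary \ref{cor:discrete vs continuous real int} and Lemma \ref{lemma:norm equivalence for lower-order terms}. You also spell out two points the paper leaves implicit: the continuous-side identification \eqref{eq:fraction space equals both domains}, and why the constants stay uniform in $h$ (exactness of the real interpolation method).
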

\begin{proof}
    This follows from Corollary \ref{cor:discrete vs continuous real int} and Lemma \ref{lemma:norm equivalence for lower-order terms}.
\end{proof}

\section{Fully discrete  stochastic maximal regularity}\label{sec:fully DSMR}
We now prove the second main result of the paper, namely the fully discrete stochastic maximal regularity for fully discrete approximation schemes of 
\begin{equation}\label{eq:Linear shifted SPDE}
     \begin{cases}
 du (t) + (\lambda_0+A)u(t) \, dt =  g(t) \,dW(t), \quad t \in (0,T),
 \\
 u(0) =0,
 \end{cases}
\end{equation}
where $T \in (0,\infty]$ and $\lambda_0 \ge0$ is large enough so that $\lambda_0+A_h$ admits a bounded $H^\infty$ calculus, uniformly in the mesh size $h$; see Theorem \ref{thm:H-calc for Ah}.  
 Recall that $\O$ is a bounded, convex domain in $\R^3$ of class $C^2$,  $A = -\nabla \cdot a\nabla + b \cdot \nabla +c$ with Dirichlet boundary conditions and 
 we require that the principal part $A^\# \coloneq -\nabla \cdot a\nabla$ of $A$ satisfies Assumption \ref{main assumptions}. We note that if the lower-order coefficients $b_i=c=0$ then one can take $\lambda_0=0$.

 We consider the full discretization of \eqref{eq:Linear shifted SPDE} given by
\begin{equation} \label{Eq:Definition of approximation scheme fully discrete}
	\begin{cases}
		U^h_{n+1} \coloneq  r(\tau (\lambda_0+A_h)) U^h_n +  r(\tau (\lambda_0+A_h)) \int_{t_{n}}^{t_{n+1}} P_h g(s) \, dW(s), \quad n=0,\dots, N-1,
		\\
		U^h_0\coloneq 0,
	\end{cases}
	\end{equation}	
where $N=T/\tau$ if $T<\infty$ and $N=\infty$ if $T=\infty$, and $r$ is either the exponential function $r(z) \coloneq e^{-z}$, or an $A$-stable rational function that is consistent of order $\ell \ge 1$ and satisfies $r(\infty) = 0$, see Section \ref{section:SMR} for the definitions. As usual, we denote $L^q$ by $X^q_0$ and define $X^q_{0,h}$ to be $S_h$ with the $L^q$-norm. 
For $\alpha \in (0,1]$ we let $X^q_{\alpha} \coloneq  D((\lambda_0+A_q)^\alpha)$ endowed with the homogeneous graph norm and define  $X^q_{\alpha,h}$ to be $S_h$ with the $\|(\lambda_0+A_h)^{\alpha}\cdot\|_{L^q}$-norm.
Note that by \eqref{eq:fraction space equals both domains} and \eqref{X12 characterization}, $$X^q_1 = W^{2,q}\cap W^{1,q}_0 \text { and } X^q_{1/2}=W^{1,q}_0 \text{ with equivalent norms}.$$
Moreover, by Corollary \ref{cor: Xah norm equiv Xa norm with lower-order terms}, there is $C>0$ independent of $h$ such that 
$$     C^{-1} \|u_h\|_{X^q_{1/2,h}} \le \|u_h\|_{X^q_{1/2}} \le \|u_h\|_{X^q_{1/2,h}}, \quad u_h \in S_h.
$$

The main result is formalized in the following theorem.
\begin{theorem}[Fully discrete SMR] \label{thm:fully discrete SMR}
Let $q \in [2,\infty)$ and $p \in (2,\infty)$, where we allow $p=2$ if $q=2$. Suppose that Assumption \ref{main assumptions} holds for the principal part $A^\#$ of $A$. Then there exists a constant $C>0$, independent of $h$ and $\tau$, such that for every $g \in L^p_{\mathbb F}(\Omega \times (0,T); \gamma(H, X^q_{0}))$, the fully discrete solution $(U^h_n)_{n=0}^{N}$ given in \eqref{Eq:Definition of approximation scheme fully discrete} satisfies
 \begin{equation}\label{ineq:space-time DSMR with discrete trace space}
      \begin{aligned}
          \Big (  \E \sup_{n=1,\dots, N} \| U^h_n \|^p_{(X^q_{0,h},X^q_{1,h})_{1/2-1/p,p}} \Big)^{1/p}& +  \Big( \E \sum_{n=1}^{N-1} \tau  \|  U^h_n \|^p_{X^q_{1/2,h}} \Big)^{1/p} 
          \\
          & \le C \|P_h g\|_{L^p(\Omega \times (0,T); \gamma(H, X^q_{0,h}))} 
      \end{aligned}
\end{equation}
and
   \begin{equation}\label{ineq:space-time DSMR}
         \begin{aligned}
      \Big( \E \sup_{n=1,\dots, N} \| U^h_n \|^p_{(X^q_0,X^q_1)_{1/2-1/p,p}} \Big )^{1/p}&+  \Big( \E \sum_{n=1}^{N-1} \tau  \|  U^h_n \|^p_{X^q_{1/2}} \Big)^{1/p} 
      \\
      &\le C \|P_h g\|_{L^p(\Omega \times (0,T); \gamma(H, X^q_{0,h}))} .
      \end{aligned}
   \end{equation}
   Furthermore, the right-hand sides of both estimates can be replaced by  $C\|g\|_{L^p(\Omega \times (0,T); \gamma(H, X^q_{0}))}$. 
\end{theorem}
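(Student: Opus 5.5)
The plan is to apply the abstract discrete-in-time result, Theorem \ref{thm:DSMR in Lq spaces}, for each fixed $h$ with $X_0 = X^q_{0,h}$, $X_1 = X^q_{1,h}$ and the operator $\lambda_0+A_h$. Then we check that all constants produced are independent of $h$. Since $S_h$ with the $L^q$-norm is a closed (finite-dimensional) subspace of $L^q(\O)$ with $q\in[2,\infty)$, the structural hypothesis on $X_0$ in Section \ref{section:SMR} holds. By Theorem \ref{thm:H-calc for Ah}, $\lambda_0+A_h$ is invertible and has a bounded $H^\infty(\Sigma_\nu)$-calculus on $X^q_{0,h}$ with some $\nu<\pi/2$, with calculus constant, sectoriality constant and $\|(\lambda_0+A_h)^{-1}\|$ bounded uniformly in $h$. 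The process $P_hg$ is progressively measurable with values in $\gamma(H,X^q_{0,h})$, and the scheme \eqref{Eq:Definition of approximation scheme fully discrete} is exactly \eqref{Eq:Definition of approximation scheme} with $A$ replaced by $\lambda_0+A_h$ and $g$ by $P_hg$.

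Theorem \ref{thm:DSMR in Lq spaces} then yields \eqref{ineq:space-time DSMR with discrete trace space}. The DSMR part controls $\tau\sum\|(\lambda_0+A_h)^{1/2}U^h_n\|^p_{L^q}=\tau\sum\|U^h_n\|^p_{X^q_{1/2,h}}$, and the maximal estimate \eqref{ineq:discrete max estimate} controls the trace norm. The main point, and the step I expect to require the most care, is \textbf{uniformity in $h$} of these constants. I would argue this by tracing the dependencies. The SMR constant from Theorem \ref{thm:SMR in Lq spaces} (via \cite{NVWSMR}) depends only on $p$, $q$, the $H^\infty$-calculus constant and angle, and the $L^q$-type constants of $X_0$; the latter are those of $L^q$ itself, since $X^q_{0,h}$ is an isometric subspace. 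Theorem \ref{thm:SMR iff DSMR} gives $C^R_{\DSMR}\le K(C^A_{\SMR}+1)$ with $K$ depending only on $p$, $X_0$, $r$ and the sectoriality data, all uniform in $h$. For the maximal estimate, I would similarly note that \cite[Theorem 6.1, Proposition 6.3]{DSMR} depend only on these quantities. The isomorphism \eqref{isomorphism between real int spaces} has norm controlled by the BIP bounds and the reiteration constants, which come from the uniform calculus. If a dependence cannot be read off directly, the fallback is to rerun these proofs, since they use only the functional calculus of the operator.

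Next, \eqref{ineq:space-time DSMR} follows from \eqref{ineq:space-time DSMR with discrete trace space} by the norm comparisons. For the sum term, I would use Corollary \ref{cor: Xah norm equiv Xa norm with lower-order terms} with $\alpha=1/2$, which gives $\|U^h_n\|_{X^q_{1/2}}\le C\|U^h_n\|_{X^q_{1/2,h}}$. For the supremum term, I would use Corollary \ref{cor:discrete vs continuous real int with lower-order terms}(ii) with $\theta=1/2-1/p\in[0,1/2)$, which gives equivalence of the continuous and discrete real interpolation norms on $S_h$ uniformly in $h$. (If $p=2$, then $\theta=0$ and the norm is just $L^q$.)

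Finally, to replace the right-hand side by $C\|g\|_{L^p(\Omega\times(0,T);\gamma(H,X^q_0))}$, I would use the uniform stability \eqref{stability of Ph on X0} of $P_h$ on $L^q$ together with the $\gamma$-extension property: a bounded operator $T$ on $X$ induces $S\mapsto TS$ on $\gamma(H,X)$ with norm $\|T\|$ (ideal property). Applying this pointwise in $(\omega,t)$ gives $\|P_hg\|_{L^p(\gamma(H,X^q_{0,h}))}\le C\|g\|_{L^p(\gamma(H,X^q_0))}$.
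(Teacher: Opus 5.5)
Your proposal is correct and follows the paper's proof. The paper likewise applies Theorem \ref{thm:DSMR in Lq spaces} to $\lambda_0+A_h$ on $X^q_{0,h}$ using the $h$-uniform calculus of Theorem \ref{thm:H-calc for Ah}, passes to the continuous norms via Corollaries \ref{cor: Xah norm equiv Xa norm with lower-order terms} and \ref{cor:discrete vs continuous real int with lower-order terms}(ii), and removes $P_h$ via its $L^q$-stability and the ideal property; your explicit tracing of why the abstract constants depend only on the calculus data and on $L^q$ itself, hence are uniform in $h$, spells out what the paper leaves implicit.
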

\begin{proof}
By Theorem \ref{thm:H-calc for Ah}, $\lambda_0 + A_h$ admits a bounded $H^\infty$-calculus on $X^q_{0,h}$ with a constant independent of $h$. Hence, by Theorem \ref{thm:DSMR in Lq spaces}  there is a constant $C>0$, independent of $h$ and $\tau$, such that \eqref{ineq:space-time DSMR with discrete trace space} holds. The inequality \eqref{ineq:space-time DSMR} follows from  \eqref{ineq:space-time DSMR with discrete trace space} and the norm equivalences \eqref{ineq: equivalence of real interpolation for trace space with lower-order},  \eqref {Xah norm equiv Xa norm with lower-order}. The final remark follows from 
\begin{equation}\label{ineq:ideal property}
        \|P_hg\|_{L^p(\Omega \times (0,T); \gamma(H, X^q_{0,h}))}  \le  C  \|g\|_{L^p(\Omega \times (0,T); \gamma(H, X^q_{0}))}.
    \end{equation}
This inequality can be deduced from the stability of $P_h \colon X^q_0 \to X^q_{0,h}$ (see \eqref{ineq:stability of Ph from Xq0 to Xq0h}) and the ideal property (cf.\,\cite[Theorem 9.1.10]{analysis_volume_2}).
\end{proof}
 As a byproduct, we also obtain the discrete-in-space stochastic maximal regularity for the spatial semi-discretization 
\begin{equation}\label{eq:spatial_scheme}
\begin{cases}
    du_h(t) + (\lambda_0+A_h) u_h(t) \, dt =  P_h g(t) \, dW(t), \quad t \in (0,T),
    \\
    u_h(0) = 0.
\end{cases}
\end{equation}

\begin{theorem}[Discrete-in-space SMR]\label{thm:space DSMR}
 Let $q \in [2,\infty)$ and $p \in (2,\infty)$, where we allow $p=2$ if $q=2$. Suppose that Assumption \ref{main assumptions} holds for the principal part $A^\#$ of $A$. Then there exists a constant $C>0$, independent of $h$, such that for every $g \in L^p_{\mathbb F}(\Omega \times (0,T); \gamma(H, X^q_{0}))$, the discrete mild solution  $u_h$ of \eqref{eq:spatial_scheme} satisfies
 \begin{equation}\label{ineq:space DSMR with discrete trace space}
 \begin{aligned}
        \Big (\E \sup_{0\le t \le T} \|u_h(t)\|^p_{(X^q_{0,h},X^q_{1,h})_{1/2-1/p,p}} \Big )^{1/p}&+  \|  u_h \|_{L^p(\Omega \times (0,T);X^q_{1/2,h})} 
        \\
        &\le C \|P_hg\|_{L^p(\Omega \times (0,T); \gamma(H, X^q_{0,h}))} 
 \end{aligned}
    \end{equation}
    and 
    \begin{equation}\label{ineq:space DSMR}
   \begin{aligned}
       \Big ( \E \sup_{0\le t \le T} \|u_h(t)\|^p_{(X^q_{0},X^q_{1})_{1/2-1/p,p}} \Big )^{1/p} &+  \|  u_h \|_{L^p(\Omega \times (0,T);X^q_{1/2})} 
       \\
       &\le C \|P_hg\|_{L^p(\Omega \times (0,T); \gamma(H, X^q_{0,h}))} .
   \end{aligned}
\end{equation}
Furthermore, the right-hand sides of both estimates can be replaced by  $C\|g\|_{L^p(\Omega \times (0,T); \gamma(H, X^q_{0}))}$.  
\end{theorem}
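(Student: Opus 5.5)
The proof will mirror that of Theorem \ref{thm:fully discrete SMR}, with the continuous-in-time result Theorem \ref{thm:SMR in Lq spaces} in place of the discrete-in-time Theorem \ref{thm:DSMR in Lq spaces}. Set $X_0 = X^q_{0,h}$ and $X_1 = X^q_{1,h}$, and take the operator $\lambda_0+A_h$. This $X_0$ is a finite-dimensional subspace of $L^q(\O)$, hence a closed subspace, so the standing hypotheses of Section \ref{section:SMR} are met.

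By Theorem \ref{thm:H-calc for Ah}, $\lambda_0+A_h$ is invertible and sectorial, and it has a bounded $H^\infty(\Sigma_\nu)$-calculus with $\nu<\pi/2$ and constant independent of $h$. The discrete mild solution is $u_h(t)=\int_0^t e^{-(t-s)(\lambda_0+A_h)}P_hg(s)\,dW(s)$. Since $P_h g$ is progressively measurable with values in $\gamma(H,X^q_{0,h})$, Theorem \ref{thm:SMR in Lq spaces} applies and gives \eqref{ineq:space DSMR with discrete trace space}. Recall that $\|u_h\|_{X^q_{1/2,h}}=\|(\lambda_0+A_h)^{1/2}u_h\|_{L^q}$, so the second term there is exactly the SMR quantity.

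The step that needs care is showing that the constant in Theorem \ref{thm:SMR in Lq spaces} is independent of $h$. Its proof (\cite{NVWSMR}, and \cite{AVstab} for the maximal estimate) bounds the constants in terms of:
\begin{enumerate}[(a)]
\item the $H^\infty$-calculus constant and angle;
\item the sectoriality constants;
\item the geometric constants of $X_0$ (square-function estimates and $\gamma$-radonifying norms in $L^q$, equivalently the UMD and type constants of $L^q$).
\end{enumerate}
Items (a) and (b) are uniform in $h$ by Theorem \ref{thm:H-calc for Ah} and Lemma \ref{lemma:sectoriality of Ah}. Item (c) is uniform because every $X^q_{0,h}$ is isometrically a subspace of $L^q$, and these constants pass to subspaces. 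I will state this constant-tracking explicitly, since it is the only non-formal point of the argument.

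For \eqref{ineq:space DSMR}, apply the norm equivalence \eqref{Xah norm equiv Xa norm with lower-order} with $\alpha=1/2$ pointwise in $(\omega,t)$; this bounds $\|u_h(t)\|_{X^q_{1/2}}$ by $C\|u_h(t)\|_{X^q_{1/2,h}}$. For the supremum term, apply Corollary \ref{cor:discrete vs continuous real int with lower-order terms}(ii) with $\theta=1/2-1/p\in[0,1/2)$; this bounds $\|u_h(t)\|_{(X^q_0,X^q_1)_{1/2-1/p,p}}$ by the discrete trace norm, uniformly in $h$. Taking $\sup_t$ and then $L^p(\Omega)$ norms transfers \eqref{ineq:space DSMR with discrete trace space} to \eqref{ineq:space DSMR}.

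Finally, to replace the right-hand sides by $C\|g\|_{L^p(\Omega\times(0,T);\gamma(H,X^q_0))}$, use \eqref{ineq:ideal property}. This follows from the uniform bound $\|P_h\|_{\L(X^q_0,X^q_{0,h})}\le C$ together with the ideal property of $\gamma$-radonifying operators, exactly as in the proof of Theorem \ref{thm:fully discrete SMR}.
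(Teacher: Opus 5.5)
Your proposal is correct and follows the paper's proof: apply Theorem~\ref{thm:SMR in Lq spaces} to $\lambda_0+A_h$ on $X^q_{0,h}$ using the $h$-uniform calculus from Theorem~\ref{thm:H-calc for Ah}, transfer to the continuous norms via Corollaries~\ref{cor: Xah norm equiv Xa norm with lower-order terms} and \ref{cor:discrete vs continuous real int with lower-order terms}, and finish with \eqref{ineq:ideal property}. Your explicit check that the constants in Theorem~\ref{thm:SMR in Lq spaces} depend only on the calculus and sectoriality data and on the geometry of $L^q$ (which passes to subspaces) is something the paper leaves implicit; note only that uniform sectoriality of $\lambda_0+A_h$ comes from its uniform calculus in Theorem~\ref{thm:H-calc for Ah}, since Lemma~\ref{lemma:sectoriality of Ah} concerns $A_h^\#$.
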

\begin{proof}
By Theorem \ref{thm:H-calc for Ah}, $\lambda_0 + A_h$ admits a bounded $H^\infty$-calculus with a constant independent of $h$. Hence, by Theorem \ref{thm:SMR in Lq spaces}  there is a $C>0$ independent of $h$ such that 
$$ \Big ( \E \sup_{0\le t \le T} \|u_h(t)\|^p_{(X^q_{0,h},X^q_{1,h})_{1/2-1/p,p}} \Big )^{1/p} +  \|  u_h \|_{L^p(\Omega \times \R_+;X^q_{1/2,h})}   \le C \|P_hg\|_{L^p(\Omega \times \R_+; \gamma(H, X^q_{0,h}))} .$$
This proves \eqref{ineq:space DSMR with discrete trace space}. The inequality \eqref{ineq:space DSMR} follows from \eqref{ineq:space DSMR with discrete trace space} and the norm equivalences \eqref {Xah norm equiv Xa norm with lower-order}, \eqref{ineq: equivalence of real interpolation for trace space with lower-order}. The final remark follows from \eqref{ineq:ideal property}.
\end{proof}

\begin{remark}[Deterministic discrete-in-space maximal regularity]
Let $T \in (0,\infty]$ and let $p,q\in (1,\infty)$. For $h>0$ and $f\in L^p(0,T;X^q_0)$ let 
\begin{equation}\label{def:mild solution deterministic discrete space}
    u_h(t) = \int_0^t \exp{-(t-s)A_h} P_h f(s)\, ds, \quad t \in (0,T).
\end{equation}
Suppose that Assumption \ref{main assumptions} holds for the principal part of $A$. Then, by Theorem \ref{theorem:Hcalc for Ah}, $\lambda_0+A_h$ has a bounded $H^\infty$-calculus with a constant independent of $h$. Consequently, by \cite[Corollary 17.3.6]{analysis_volume_3} and the stability of $P_h$, the mild solution $u_h$ given in \eqref{def:mild solution deterministic discrete space} satisfies 
\begin{equation}\label{ineq:discrete space MR}
    \|\partial_t u_h\|_{L^p(0,T;X^q_{0,h})} + \|  u_h\|_{L^p(0,T;X^q_{1,h})} \le C \|P_h f\|_{L^p(0,T;X^q_{0,h})}\le C\|f\|_{L^p(0,T;X^q_0)} ,
\end{equation}
where $C>0$ is independent of $h$.
The estimate \eqref{ineq:discrete space MR} is well-established in the literature. Geissert~\cite{Geissert-applications-of-DMR} gave a proof using kernel estimates that typically require $C^{2,\alpha}$ coefficients $a_{ij}$. Li~\cite{BuyangLi2015} proved  \eqref{ineq:discrete space MR} in the case of Neumann boundary conditions with $W^{1,\infty}$ coefficients. For the corresponding results in the case of polyhedral domains, we refer the reader to \cite{Buyang_Li2019} and the references therein.

\end{remark}

\appendix

\newpage
\bibliographystyle{plain}

\bibliography{refs}

\end{document}